\title{%
Unsmoothable group actions on compact one--manifolds}
\author[H. Baik]{Hyungryul Baik}
\address{Mathematisches Institut, Universt\"at Bonn, 53115 Bonn, Germany}
\email{baik@math.uni-bonn.de}
\urladdr{http://www.math.uni-bonn.de/people/baik/}
\author[S. Kim]{Sang-hyun Kim}
\address{Department of Mathematical Sciences, Seoul National University, Seoul, Korea}
\email{s.kim@snu.ac.kr}
\urladdr{http://cayley.kr}
\author[T. Koberda]{Thomas Koberda}
\address{Department of Mathematics, University of Virginia, Charlottesville, VA 22904-4137, USA}
\email{thomas.koberda@gmail.com}
\urladdr{http://faculty.virginia.edu/Koberda}
\let\@@enum@org\@@enum@
\def\@@enum@[#1]{\@@enum@org[\normalfont #1]}
\newtheorem{thm}{Theorem}[section]
\newtheorem{lem}[thm]{Lemma}
\newtheorem{cor}[thm]{Corollary}
\newtheorem{prop}[thm]{Proposition}
\newtheorem{que}{Question}
\theoremstyle{definition}
\newtheorem{defn}[thm]{Definition}
\newtheorem{rem}[thm]{Remark}
\newtheorem*{claim*}{Claim}
\newcommand\ba{\begin{align*}}
\newcommand\ea{\end{align*}}
\newcommand\be{\begin{enumerate}}
\newcommand\ee{\end{enumerate}}
\newcommand\bp{\begin{proof}}
\newcommand\ep{\end{proof}}
\newcommand\bpp{\begin{prop}}
\newcommand\epp{\end{prop}}
\newcommand\bpb{\begin{prob}}
\newcommand\epb{\end{prob}}
\newcommand\bd{\begin{defn}}
\newcommand\ed{\end{defn}}
\newcommand\bh{\begin{hint}}
\newcommand\eh{\end{hint}}
\newcommand\N{\mathbb{N}}
\newcommand\Z{\mathbb{Z}}
\newcommand\JJ{\mathcal{J}}
\newcommand\YY{\mathcal{Y}}
\newcommand\ZZ{\mathcal{Z}}
\newcommand\bZ{\mathbb{Z}}
\newcommand\bR{\mathbb{R}}
\newcommand\mC{\mathcal{C}}
\newcommand\form[1]{\langle #1\rangle}
\newcommand\var{\operatorname{var}}
\newcommand\Aut{\operatorname{Aut}}
\newcommand\Out{\operatorname{Out}}
\newcommand\supp{\operatorname{supp}}
\newcommand\Mod{\operatorname{Mod}}
\newcommand\Homeo{\operatorname{Homeo}}
\newcommand\Diffb{\operatorname{Diff}_+^{1+\mathrm{bv}}}
\newcommand\Diff{\operatorname{Diff}}
\DeclareMathOperator\Fix{Fix}
\DeclareMathOperator\Per{Per}
\newcommand\sse{\subseteq}
\newcommand\gam{\Gamma}
\newcommand\co{\colon}
\newcommand\Cbv{C^{1+\mathrm{bv}}}
\renewcommand{\MR}[1]
{\href{http://www.ams.org/mathscinet-getitem?mr=#1}{MR#1}}
\begin{document}

\date{\today}

\begin{abstract}
We show that no finite index subgroup of a sufficiently complicated mapping class group or braid group can act faithfully by $\Cbv$ diffeomorphisms on the circle, which generalizes a result of Farb--Franks, and which parallels a result of Ghys and Burger--Monod concerning differentiable actions of higher rank lattices on the circle. This answers a question of Farb, which has its roots in the work of Nielsen. 
We prove this result by showing that if a right-angled Artin group acts faithfully by $\Cbv$ diffeomorphisms on a compact one--manifold, then its defining graph has no subpath of length three. As a corollary, we also show that no finite index subgroup of $\textrm{Aut}(F_n)$ and $\textrm{Out}(F_n)$ for $n\geq 3$, the Torelli group for genus at least $3$, and of each term of the Johnson filtration for genus at least $5$, can act faithfully by $\Cbv$ diffeomorphisms on a compact one--manifold.
\end{abstract}

\maketitle
\setcounter{tocdepth}{1}

\section{Introduction}

\subsection{Main results}
Let $S=S_{g,n,b}$ be an orientable surface of genus $g$, with $n$ marked points, and with $b$ boundary components, and let $\Mod(S)$ denote its mapping class group. 
We will write \[c(S)=3g-3+n+b\] for the \emph{complexity} of $S$. 
Throughout this paper, we let $M$ be a compact (possibly disconnected) one--manifold, and we let $\Diffb(M)$ be the group of $C^1$ orientation--preserving diffeomorphisms of $M$ whose first derivatives have bounded variation; see Section~\ref{s:onedim} for a more detailed discussion. The following is a classical result of Nielsen:

\begin{thm}[Nielsen~\cite{MR2850125}]\label{t:nielsen}
If $S=S_{g,1,0}$, the group $\Mod(S)$ admits a faithful continuous action on $S^1$ without global fixed points.
\end{thm}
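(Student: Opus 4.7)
The plan is to construct the Nielsen action of $\Mod(S)$ on the circle at infinity of the hyperbolic universal cover. Endow $S = S_{g,1,0}$ (assume $g \geq 2$; the case $g = 1$ is treated by the classical action of $\operatorname{SL}(2,\bZ)$ on $\RP^1$) with a complete hyperbolic metric in which the marked point $p$ is an interior point, identify the universal cover $\yt S$ with $\bH^2$, and identify $\partial \bH^2$ with $S^1$. Fix a lift $\yt p \in \bH^2$ of $p$ once and for all.

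Each $[f] \in \Mod(S)$ has a representative $\phi \co S \to S$ fixing $p$; since $\pi_1(S,p)$ acts freely on $\bH^2$ and permutes the preimages of $p$, there is a unique lift $\yt \phi \co \bH^2 \to \bH^2$ with $\yt\phi(\yt p) = \yt p$. After replacing $\phi$ with a bi-Lipschitz representative in the hyperbolic structure, $\yt\phi$ becomes a quasi-isometry of $\bH^2$ and therefore extends continuously to a self-homeomorphism $\partial \yt\phi \co S^1 \to S^1$. Isotopic choices of $\phi$ fixing $p$ yield equivariantly isotopic lifts that agree on the boundary, so the rule $[f] \mapsto \partial \yt\phi$ defines a group homomorphism $\rho \co \Mod(S) \to \Homeo^+(S^1)$.

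The main obstacle will be establishing faithfulness. If $\rho([f]) = \id$, then $\yt \phi$ fixes every point of $\partial \bH^2$, and the Morse lemma for quasi-geodesics in $\bH^2$ forces $\yt\phi$ to lie at uniformly bounded distance from $\id_{\bH^2}$. Straight-line interpolation along the geodesic joining $x$ to $\yt\phi(x)$ then produces a $\pi_1(S)$-equivariant isotopy from $\yt\phi$ to $\id_{\bH^2}$ (equivariance follows because geodesics are permuted by the deck group of isometries, and $\yt\phi$ conjugates the deck group to itself). Descending the isotopy to $S$ exhibits $\phi$ as isotopic to $\id_S$ through homeomorphisms fixing $p$, so $[f] = 1$ in $\Mod(S)$. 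The absence of a global fixed point on $S^1$ then follows from exhibiting two pseudo-Anosov mapping classes whose canonical lifts have disjoint sets of attracting and repelling boundary fixed points; such pairs exist by the density of pseudo-Anosov axes in the space of geodesics of $\bH^2$, so no single point of $S^1$ can be fixed by the entire image of $\rho$.
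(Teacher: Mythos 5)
The paper does not prove this; it is stated as a classical fact with a citation to Farb--Margalit's \emph{Primer}, so there is no internal proof to compare against. Your sketch follows the standard Nielsen construction from that source, and the strategy is sound, but a few steps contain genuine gaps.

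First, for $g=1$ the $\operatorname{SL}(2,\bZ)$--action on $\RP^1$ is \emph{not} faithful: $-I$ acts trivially. One must instead pass to the connected double cover $S^1\to\RP^1$ (using that $\operatorname{SL}(2,\bZ)$ is the preimage of $\operatorname{PSL}(2,\bZ)$ in the nontrivial connected double cover of $\operatorname{PSL}(2,\bR)$), or simply treat the marked point as a puncture so that $S_{1,1,0}$ carries a complete finite-area hyperbolic metric and the $g\ge 2$ argument runs unchanged. Second, the faithfulness step needs more care. From $\partial\tilde\phi=\id$ you should first deduce $\phi_*=\id$ on $\pi_1(S,p)$: for each deck transformation $\gamma$, the isometries $\gamma$ and $\tilde\phi\gamma\tilde\phi^{-1}=\phi_*(\gamma)$ have identical boundary extensions, hence coincide. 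Only with this in hand does $\tilde\phi$ genuinely commute with the deck group, which is what makes the straight-line interpolation equivariant and able to descend; your parenthetical remark that ``$\tilde\phi$ conjugates the deck group to itself'' holds for \emph{any} lift and does not by itself yield equivariance. Even so, geodesic interpolation a priori produces only a \emph{homotopy} from $\id_{\bH^2}$ to $\tilde\phi$ --- injectivity of the intermediate time slices is not automatic, since convexity of the distance function in $\bH^2$ does not forbid two such geodesics from crossing --- so to conclude $[\phi]=1$ you should instead invoke the theorem that homotopic homeomorphisms of a surface are isotopic (rel the marked point). Finally, for the absence of a global fixed point a cleaner argument is that the point-pushing subgroup $\pi_1(S_g,p)\hookrightarrow\Mod(S_{g,1,0})$ is sent by $\rho$ to the Fuchsian boundary action of a cocompact lattice, whose limit set is all of $S^1$; this avoids your ``density of pseudo-Anosov axes in the space of geodesics of $\bH^2$'' claim, which as phrased refers to the wrong space (pseudo-Anosov axes live in Teichm\"uller space, not in $\bH^2$).
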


In this article we prove the following result which shows that the action in Theorem~\ref{t:nielsen}, and in fact any action at all even after taking a finite index subgroup, is not smoothable to $C^1$ with bounded variation.

\begin{thm}\label{thm:modc2}
There exists a finite index subgroup $G<\Mod(S)$ and an injective homomorphism from $G$ to $\Diffb(M)$ if and only if $c(S)\leq 1$.
\end{thm}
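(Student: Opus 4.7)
The biconditional splits naturally into two directions. The backward direction ($c(S) \leq 1$) is essentially classical: complexity-zero surfaces (such as $S_{0,3,0}$) have finite mapping class group, while complexity-one surfaces ($S_{0,4,0}$, $S_{1,1,0}$, $S_{1,0,0}$) have mapping class group commensurable with $\mathrm{SL}_2(\Z)$, hence virtually free of rank two. Any finite or finitely generated free group admits a faithful $C^\infty$ action on a suitable compact one-manifold: a finite group acts on a disjoint union of circles indexed by its elements via translation-plus-rotation, while a free group embeds into the M\"obius action of $\PSL_2(\R)$ on $\RP^1 = S^1$. Thus in each low-complexity case a finite index subgroup of $\Mod(S)$ embeds into $\Diffb(M)$ for some $M$.

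For the forward direction ($c(S) \geq 2$), the plan is to reduce to the RAAG dichotomy announced in the abstract: if the right-angled Artin group $A(\Gamma)$ acts faithfully by $\Cbv$ diffeomorphisms on a compact one-manifold, then $\Gamma$ contains no induced path on four vertices ($P_4$). The reduction proceeds through Dehn twist subgroups. Whenever $c(S) \geq 2$, one can exhibit a chain of four essential simple closed curves $c_1, c_2, c_3, c_4$ on $S$ with $i(c_j, c_{j+1}) > 0$ and $i(c_j, c_k) = 0$ for $|j - k| \geq 2$; the base case $c(S) = 2$ (e.g.\ the twice-punctured torus or five-punctured sphere) already accommodates such a chain, and higher complexity follows by subsurface embedding. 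By Koberda's RAAG embedding theorem for powers of Dehn twists, the elements $T_{c_1}^N, \ldots, T_{c_4}^N$ generate a copy of $A(P_4)$ inside $\Mod(S)$ for all sufficiently large $N$. Given a finite index subgroup $G < \Mod(S)$, one further enlarges $N$ so that each $T_{c_j}^N$ lies in $G$; combining the resulting inclusion $A(P_4) \hookrightarrow G$ with any hypothetical faithful $\Cbv$ action of $G$ on $M$ would yield a faithful $\Cbv$ action of $A(P_4)$ on $M$, contradicting the dichotomy.

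The principal obstacle is the RAAG dichotomy itself, which carries all of the analytic content of the paper. Its proof should rest on classical one-dimensional $\Cbv$ dynamics --- Kopell's lemma on commuting diffeomorphisms of the interval and its various descendants --- and must show that the specific commutation pattern encoded by $P_4$ (each of the two middle generators commutes with both outer generators, the outer pair commutes, but the middle pair does not) cannot be dynamically realized on a compact one-manifold. By contrast, the combinatorial reduction from $\Mod(S)$ to $A(P_4)$ sketched above is largely formal once Koberda's embedding theorem and the existence of the four-curve chain on every surface of complexity at least two are in place.
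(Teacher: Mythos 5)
Your forward direction ($c(S)\ge 2$) mirrors the paper's argument exactly: use Koberda's theorem to realize $A(P_4)$ inside $\Mod(S)$ via sufficiently large powers of Dehn twists along a four--curve chain (Theorem~\ref{thm:koberda} and Corollary~\ref{cor:subgroup}), observe that raising the generators to a common multiple pushes this copy into any given finite index subgroup (Lemma~\ref{lem:subgroup}), and then quote the $P_4$ non--embedding theorem (Theorem~\ref{thm:p4}), which carries all of the analytic content. That reduction is correct and identical in spirit to the paper's.

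The backward direction, however, has a genuine gap as written. You claim that every complexity--one surface has mapping class group commensurable with $\mathrm{SL}_2(\Z)$ and hence virtually free of rank two, but this fails for surfaces with boundary: for instance $\Mod(S_{1,0,1})\cong B_3$ has infinite center generated by the boundary Dehn twist and so cannot be commensurable with $\mathrm{SL}_2(\Z)$ (whose center is finite); the same issue arises for $S_{0,n,b}$ with $n+b=4$ and $b\ge 1$, where the boundary twists contribute a central free abelian factor. The correct structural statement, which the paper quotes from~\cite{CLM2014}, is that $\Mod(S)$ for $c(S)\le 1$ is \emph{virtually a product of a free group and a cyclic group}, and one then embeds such a product into $\Diffb(M)$ (e.g.\ by Grabowski~\cite{Grabowski1988}, or directly by putting a free Fuchsian group and a commuting rotation on components of $M$). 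Two further minor points: $S_{1,0,0}$ has complexity $0$, not $1$; and your conclusion that a finite index subgroup embeds into $\Diffb(M)$ ``for some $M$'' is weaker than the theorem, which fixes the compact one--manifold $M$ in advance --- one should note the embedding exists for every nonempty compact one--manifold.
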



Theorem \ref{thm:modc2} generalizes a result of Farb--Franks~\cite{FF2001}, wherein it is shown that if $S=S_{g,n,0}$, where $g\geq 3$ and where $n\in\{0,1\}$, then there is no nontrivial $C^2$ action of $\Mod(S)$ on the circle or the closed interval. As Farb and Franks remark, Ghys established this fact independently in the case of the circle.

Note that a $C^2$ action or a $C^1$ action with Lipschitz derivatives on $M$ is always $\Cbv$. 
For a lower regularity, Parwani~\cite{Parwani2008} proves that every $C^1$ mapping class group action on the circle is trivial if the genus of $S$ is at least six.

We say a group $H$ \emph{embeds} into another group $G$ if there exists an injective homomorphism from $H$ into $G$, and $H$ \emph{virtually embeds} into $G$ if a finite index subgroup of $H$ embeds into $G$.
Theorem \ref{thm:modc2} has the following immediate corollary:

\begin{cor}\label{cor:braid}
The $n$--strand braid group $B_n$ virtually embeds into $\Diffb(M)$ if and only if $n\leq 3$.
\end{cor}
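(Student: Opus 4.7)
The plan is to deduce Corollary~\ref{cor:braid} directly from Theorem~\ref{thm:modc2} via the standard identification of the braid group with a mapping class group. First, I would invoke the classical theorem of Artin that the $n$--strand braid group $B_n$ is isomorphic to $\Mod(S_{0,n,1})$, the mapping class group of a disk with $n$ interior marked points (permuted as a set) and boundary fixed pointwise. This identification is standard and can be found, for instance, in Farb--Margalit.

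With this identification in hand, the complexity in the sense of the paper is simply
$c(S_{0,n,1}) = 3\cdot 0 - 3 + n + 1 = n-2$,
so the condition $c(S)\leq 1$ appearing in Theorem~\ref{thm:modc2} translates precisely to $n \leq 3$. Applying Theorem~\ref{thm:modc2} with $S = S_{0,n,1}$, I conclude that a finite index subgroup of $B_n$ embeds into $\Diffb(M)$ if and only if $n \leq 3$, which is exactly the statement of the corollary.

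The only point requiring any care is the convention for $\Mod$: in the paper's setup mapping classes are permitted to permute the marked points as a set rather than fix them pointwise, so that $\Mod(S_{0,n,1})$ indeed recovers the full braid group $B_n$ and not the pure braid group $PB_n$. Once this bookkeeping is checked, the corollary is immediate. For concreteness one could separately exhibit the embeddings in the small cases---the trivial embedding for $n=1$, rotations of $S^1$ for $n=2$, and for $n=3$ a finite index subgroup of the form $F_k \times \Z$ acting on $S^1 \sqcup S^1$ via a Fuchsian representation on one factor and rotations on the other---but the forward implication of Theorem~\ref{thm:modc2} already furnishes existence, so this is merely a sanity check. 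The main (and only) conceptual obstacle is therefore the identification $B_n \cong \Mod(S_{0,n,1})$ in the paper's conventions; everything else is a one-line complexity computation.
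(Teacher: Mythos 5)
Your proof is correct and matches the paper's intent exactly: the paper presents the corollary as an ``immediate'' consequence of Theorem~\ref{thm:modc2} via the standard identification $B_n\cong\Mod(S_{0,n,1})$ and the one-line computation $c(S_{0,n,1})=n-2$. The only unnecessary detour is the sketch of explicit actions for $n\le 3$, which (as you note) is already guaranteed by the ``if'' direction of Theorem~\ref{thm:modc2} and moreover would need to work for $M=I$ or $M=S^1$ individually, not just for $S^1\sqcup S^1$.
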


Previous results about the non--existence of smooth mapping class group actions on one--manifolds used particular relations which hold in mapping class groups, especially braid relations. The difficulty with generalizing such arguments is that most relations other than commutation/non--commutation of elements do not persist in finite index subgroups.

We are able to establish Theorem \ref{thm:modc2} by exhibiting a right-angled Artin group which cannot act faithfully by $\Cbv$ diffeomorphisms on a compact one--manifold. 
Recall that if $\gam$ is a finite simplicial graph with the vertex set $V(\gam)$ and the edge set $E(\gam)$, the right-angled Artin group $A(\gam)$ is the finitely presented group 
\[A(\gam)=\langle V(\gam)\mid [v_i,v_j]=1 \textrm{ if and only if } \{v_i,v_j\}\in E(\gam)\rangle.\] 
We let $P_n$ denote the path on $n$ vertices (see Figure~\ref{f:p4}).

The core of the paper is in proving the following result:

\begin{thm}\label{thm:p4}
The group $A(P_4)$ does not virtually embed into $\Diffb(M)$.
\end{thm}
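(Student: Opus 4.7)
The plan is proof by contradiction using centralizer rigidity for $\Cbv$ diffeomorphisms (Szekeres' theorem). Suppose a finite-index subgroup $G \le A(P_4)$ embeds in $\Diffb(M)$. I first perform standard reductions. The power endomorphism $v \mapsto v^N$ is injective on any right-angled Artin group, and for $N$ divisible by the index of $G$ its image lies inside $G$; so I may replace $G$ by a copy of $A(P_4)$ sitting inside it and work with a faithful action of $A(P_4)$ itself, generated by $a,b,c,d$. After one further finite-index reduction I may assume that the action preserves each connected component of $M$ and its orientation. Circle components are eliminated by the standard rotation-number dichotomy: either some generator has irrational rotation number, in which case Denjoy together with centralizer rigidity forces more commutation among the generators than $A(P_4)$ permits, or after a further finite-index reduction there is a common global fixed point and the action cuts to one on a compact interval $I$.

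The key dynamical input is Szekeres' theorem: for any $f \in \Diffb$ of a half-open interval without interior fixed points, the centralizer of $f$ in $\Diffb$ lies in a $C^1$ one-parameter flow through $f$, and is therefore abelian. Apply this on each connected component $U$ of $I \setminus \Fix(b)$: both $a|_U$ and $c|_U$ commute with the fixed-point-free $b|_U$, so both lie in the Szekeres flow of $b|_U$ and in particular commute with each other. Summing over components gives
\[
\supp([a,c]) \;\subseteq\; \Fix(b),
\]
and the symmetric argument applied to the pair $(c,d)$ at the middle vertex $c$ yields
\[
\supp([b,d]) \;\subseteq\; \Fix(c).
\]

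From here the plan is to iterate and descend. Because $[a,c]$ commutes with $b$ and is supported inside $\Fix(b)$, one can analyze it as a diffeomorphism of the closed set $\Fix(b)$, where the residual commuting pair $(c,d)$ still governs the local dynamics through Szekeres; symmetrically for $[b,d]$ inside $\Fix(c)$. Combining the two support containments with the fact that $d$ need not preserve $\Fix(b)$ and $a$ need not preserve $\Fix(c)$, one builds from the non-trivial commutators $[a,c]$ and $[b,d]$ a family of distinct elements of $A(P_4)$ whose images in $\Diffb(I)$ have strictly nested or shifted supports. A Kopell / Plante--Thurston style bounded-variation estimate then prohibits such infinite nesting, forcing one of $[a,c]$ or $[b,d]$ to act trivially and contradicting faithfulness of the action on the free subgroups $\langle a,c\rangle$ and $\langle b,d\rangle$ of $A(P_4)$.

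The step I expect to be the hardest is the final descent. The qualitative containments $\supp([a,c]) \subseteq \Fix(b)$ and $\supp([b,d]) \subseteq \Fix(c)$ are not by themselves sufficient: both commutators can a priori act nontrivially inside $\Fix(b) \cap \Fix(c)$, where the first-pass Szekeres input supplies no information. Closing this gap will require a careful choice of fundamental domains for $b$ and $c$ and essential use of the bounded-variation hypothesis through a summation or distortion estimate, in order to rule out the nested support configuration that the relations of $A(P_4)$ algebraically permit.
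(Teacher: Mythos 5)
Your initial reductions are correct and parallel the paper's: the power endomorphism $v\mapsto v^N$ handles virtual embeddings (the paper's Lemma~\ref{lem:subgroup}); restricting to a connected component and forcing grounded generators via rotation-number considerations matches the paper's Lemma~\ref{lem:connected} and Lemma~\ref{l:fix2}. Your first dynamical step is also correct: with the generators grounded, the Disjointness Condition (Lemma~\ref{l:disjoint}) forces $a$ and $c$ to preserve each component of $\supp b$ setwise, and Kopell/Szekeres then gives $\supp([a,c])\subseteq \Fix b$ and, symmetrically, $\supp([b,d])\subseteq\Fix c$. This is exactly the Abelian Criterion in the paper (Theorem~\ref{t:abel}), applied componentwise, and it is a genuine and correct observation.

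However, everything after that is a genuine gap, and you explicitly flag it as such. The paper's hard work lies precisely in what you defer to a ``careful choice of fundamental domains'' and a ``bounded-variation estimate,'' namely Proposition~\ref{p:abcd} (producing, from faithfulness, \emph{infinitely many} distinct components $I_a^j\in\pi_0(\supp a)$ and $I_d^j\in\pi_0(\supp d)$ with the crossing property $\phi(cb)I_a^j\pitchfork I_d^j$) and the Two Jumps Lemma (Lemma~\ref{l:two-jumps}), which converts that crossing configuration into a $C^1$ derivative contradiction at an accumulation point. Your proposed ``iterate and descend'' step has no concrete mechanism: you note that $[a,c]$ commutes with $b$, but $[a,c]$ does \emph{not} commute with $d$ in $A(P_4)$ (nor does $[b,d]$ with $a$), so the ``residual commuting pair $(c,d)$'' is not available to govern $[a,c]$ via a second application of Szekeres; the commutation you would need simply is not there. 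The qualitative containments $\supp([a,c])\subseteq\Fix b$ and $\supp([b,d])\subseteq\Fix c$ place no constraint whatsoever on what happens inside $\Fix b\cap\Fix c$, which could be, e.g., a positive-measure Cantor set carrying all the nonabelian dynamics. Thus the proposal does not establish the theorem; the combinatorial engine of the paper (the chains of intervals and the successive-conjugation lemma~\ref{l:fin}, used in Lemma~\ref{l:ysinf} to force infinitely many bad intervals) together with the quantitative derivative estimate are the missing content, and they are not routine.
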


Note that Theorem \ref{thm:p4} provides a (relatively simple) purely algebraic obstruction for a group to act faithfully on a compact one-manifold by $\Cbv$ diffeomorphisms.
Whereas the regularity assumption in Theorem \ref{thm:p4} may appear artificial at first glance, it is actually optimal; in particular, it cannot be weakened to $C^1$. See Theorem \ref{thm:rnilp} below.

Recall that a graph $\Lambda$ is an \emph{induced subgraph} of a graph $\gam$ if $V(\Lambda)\subset V(\gam)$, and whenever $v_1,v_2\in V(\Lambda)$ span an edge of $\gam$, they also span an edge of $\Lambda$. A graph $\gam$ is called \emph{$P_4$--free} if no induced subgraph of $\gam$ is isomorphic to $P_4$.

\begin{cor}\label{cor:p4free}
Suppose that $A(\gam)$ embeds in $\Diffb(M)$. Then $\gam$ is $P_4$--free.
\end{cor}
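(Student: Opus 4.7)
The plan is to prove the contrapositive: assuming $\gam$ is not $P_4$-free, I will produce an embedding $A(P_4)\inj\Diffb(M)$, contradicting Theorem~\ref{thm:p4}.

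First, suppose that $\gam$ contains an induced subgraph $\Lambda\cong P_4$. I will invoke the standard fact about right-angled Artin groups that if $\Lambda$ is an induced subgraph of $\gam$, then the subgroup of $A(\gam)$ generated by $V(\Lambda)$ is naturally isomorphic to $A(\Lambda)$. The usual proof uses the retraction $A(\gam)\surj A(\Lambda)$ defined by sending each vertex in $V(\Lambda)$ to itself and each vertex outside $V(\Lambda)$ to the identity; this map is well-defined precisely because $\Lambda$ is an \emph{induced} subgraph (no new commutation relations are forced), and it is a left inverse to the natural map $A(\Lambda)\to A(\gam)$, so the latter is injective.

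Composing this embedding with the given embedding $A(\gam)\inj\Diffb(M)$ yields an embedding $A(P_4)\inj\Diffb(M)$. Since an embedding is a special case of a virtual embedding (take the whole group as the finite index subgroup), this contradicts Theorem~\ref{thm:p4}, which asserts that $A(P_4)$ does not virtually embed into $\Diffb(M)$. Hence $\gam$ must be $P_4$-free.

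There is no real obstacle here; the only content is the observation about induced subgraphs producing embedded sub-RAAGs, which is well-known. The entire argument is a one-line deduction from Theorem~\ref{thm:p4} once that fact is in hand.
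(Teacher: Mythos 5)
Your argument is correct and is exactly the intended deduction: the paper leaves the proof of this corollary implicit, as it follows immediately from Theorem~\ref{thm:p4} together with the standard fact that an induced subgraph $\Lambda\le\gam$ gives an embedding $A(\Lambda)\inj A(\gam)$. Nothing further is needed.
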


There are two main steps in the proof of Theorem \ref{thm:p4}. The first step is finding a certain configuration of infinitely many intervals in the supports of the generators of $A(P_4)$. This step constitutes most of the original content of this paper, and builds on the classical results in one--dimensional dynamics described in Section~\ref{s:onedim}. The results of Section~\ref{s:raag-path} give a detailed description of right-angled Artin group actions on one--dimensional manifolds.
Labeling the vertices of $P_4$ as shown in Figure~\ref{f:p4}, we prove the following proposition in Section~\ref{s:raag-path}, which is crucial for the proof of Theorem \ref{thm:p4}:

\begin{prop}\label{p:abcd}
Suppose $M$ is connected. If there is a faithful representation $\phi\co A(P_4)\to \Diffb(M)$
such that $\Fix\phi(v)\neq\varnothing$ for each $v\in V(P_4)$,
then there exist collections of infinitely many intervals 
\[\{I^j_a\co {j\ge1}\}\subseteq\pi_0(\supp \phi(a))\quad\text{and}\quad\{I^j_d\co {j\ge1}\}\subseteq\pi_0(\supp \phi(d))\]
such that the following two conditions hold:
\be[(i)]
\item
$I_v^j\ne I_v^k$ whenever $v\in\{a,d\}$ and $j\ne k$.
\item
For each $j$, we have $I_a^j\ne I_d^j$ and $\phi(cb)I_a^j\cap I_d^j\neq\varnothing$.
\ee 
\end{prop}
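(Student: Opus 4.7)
The plan is to combine the commutation structure of $A(P_4)$ with Kopell-style rigidity for commuting $\Cbv$ diffeomorphisms to produce the required infinite families.

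\emph{Step 1 (single witness).} Using $[a,b]=[b,c]=1$, one computes directly that $(cb)a(cb)^{-1}=cac^{-1}$, so $\phi(cb)\bigl(\supp\phi(a)\bigr)=\phi(c)\bigl(\supp\phi(a)\bigr)$. Because $[a,d]\ne 1$ and $[c,d]=1$ in $A(P_4)$, the element $[cac^{-1},d]$ is non-trivial---otherwise conjugating by $c^{-1}$ would give $[a,d]=1$---so by faithfulness $\phi(cac^{-1})$ and $\phi(d)$ fail to commute. Standard reasoning about non-commuting $\Cbv$ diffeomorphisms of one-manifolds then yields an initial witness pair $I^\ast\in\pi_0(\supp\phi(a))$ and $J^\ast\in\pi_0(\supp\phi(d))$ with $I^\ast\ne J^\ast$ and $\phi(cb)(I^\ast)\cap J^\ast\ne\varnothing$; the inequality $I^\ast\ne J^\ast$ is arranged by selecting a witness where $\phi(cb)$ carries $I^\ast$ into a component of $\supp\phi(d)$ which is not $I^\ast$ itself.

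\emph{Step 2 (propagation).} Since $\langle b,c\rangle\cong\Z^2$ centralizes $cb$ in $A(P_4)$, and since $\phi(b)$ permutes $\pi_0(\supp\phi(a))$ while $\phi(c)$ permutes $\pi_0(\supp\phi(d))$, I would transport the initial witness along orbits of this $\Z^2$-action. The candidate pairing $I_a^j=\phi(b)^{j-1}(I^\ast)$ and $I_d^j=\phi(c)^{j-1}(J^\ast)$ is compatible with the commutation identity $\phi(cb)\phi(b)^{j-1}=\phi(b)^{j-1}\phi(cb)$; preserving the intersection $\phi(cb)(I_a^j)\cap I_d^j\ne\varnothing$ requires an adaptive choice of exponents, since $\phi(b)$ need not preserve $\supp\phi(d)$ and $\phi(c)$ need not preserve $\supp\phi(a)$. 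The coordination is supplied by $[b,c]=1$: the simultaneous iterates $\phi(b)^n\phi(c)^n$ act coherently on the base overlap, and Kopell-type control on the ``slippage'' $\phi(b)^n\phi(c)^{-n}$ near this overlap preserves the non-empty intersection condition.

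\emph{Step 3 (distinctness, the main obstacle).} Showing the family is genuinely infinite is the principal difficulty. If $\phi(b)^N(I^\ast)=I^\ast$ for some $N\ne 0$, then $\phi(b)^N|_{I^\ast}$ is a $\Cbv$ diffeomorphism of $I^\ast$ commuting with $\phi(a)|_{I^\ast}$; Kopell's lemma then forces either $\phi(b)^N|_{I^\ast}=\id$ or that $\phi(b)^N|_{I^\ast}$ has no interior fixed points. The non-commutation $[a,c]\ne 1$, combined with the overlap $\phi(cb)(I^\ast)\cap J^\ast$ and the assumption $\Fix\phi(a)\ne\varnothing$ on $\partial I^\ast$, rules out both alternatives, so the $\phi(b)$-orbit of $I^\ast$ is infinite. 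A symmetric argument using $[b,d]\ne 1$ yields an infinite $\phi(c)$-orbit of $J^\ast$. This invocation of Kopell is where the $\Cbv$ regularity assumption is indispensable; indeed, the conclusion is known to fail in the $C^1$ setting, as highlighted by Theorem~\ref{thm:rnilp}.
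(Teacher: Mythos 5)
Your proposal diverges from the paper's argument at Step~2 and the divergence is fatal: the propagation does not preserve the overlap condition. You correctly note that $\phi(b)$ permutes $\pi_0(\supp\phi(a))$ and $\phi(c)$ permutes $\pi_0(\supp\phi(d))$, and that $\phi(cb)\phi(b)^{j-1}=\phi(b)^{j-1}\phi(cb)$. But then
\[
\phi(cb)(I_a^j)\cap I_d^j=\phi(b)^{j-1}\phi(cb)(I^\ast)\cap\phi(c)^{j-1}(J^\ast),
\]
and there is no reason for the right-hand side to be nonempty: the left factor is pushed by $\phi(b)^{j-1}$ and the right factor by $\phi(c)^{j-1}$, and these iterates, despite commuting, have no reason to ``chase'' the same region of $M$. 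The phrase ``Kopell-type control on the slippage $\phi(b)^n\phi(c)^{-n}$'' is not an argument --- Kopell's Lemma controls commuting diffeomorphisms with a common invariant interval and without interior fixed points, neither of which is available here. Step~1 also glosses over real content: non-commutation of $\phi(cac^{-1})$ and $\phi(d)$ gives a point where they disagree, but converting this to a component-level witness with $I^\ast\ne J^\ast$ already requires the Disjointness Condition and the Abelian Criterion (the paper's Lemma~\ref{l:good} is exactly this bookkeeping). And in Step~3, even granting the Kopell dichotomy for $\phi(b)^N|_{I^\ast}$, the claim that $[a,c]\ne 1$ together with $\Fix\phi(a)\supseteq\partial I^\ast$ rules out both alternatives is unsupported; moreover an infinite $\phi(b)$-orbit of $I^\ast$ and an infinite $\phi(c)$-orbit of $J^\ast$, each separately, would still not reconstruct the required paired overlaps.

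The paper's proof is structurally quite different and is not an orbit-propagation argument at all. It first isolates, via Lemma~\ref{l:jfg} and Lemma~\ref{l:cd}, the set $J(c,d)$ where $c$ and $d$ genuinely interact, sets $\YY=\pi_0(M\setminus\overline{J(c,d)})$, and proves (Lemmas~\ref{l:slippery}--\ref{l:ga}) that each $Y\in\YY$ sits inside a canonical ``zone'' $Z(Y)$, with the $Z(Y)$'s pairwise disjoint and each $Z(Y)$ invariant under $g=cbab^{-1}c^{-1}$, while $a$ and $d$ fix each $Y$ setwise. The subcollection $\YY_0\subseteq\YY$ of zones containing a genuine witness pair $(I_a,I_d)$ with $cbI_a\pitchfork I_d$ and $I_a\ne I_d$ is then shown to be infinite by contradiction (Lemma~\ref{l:ysinf}): if $\YY_0$ were finite, the successive-conjugation Lemma~\ref{l:fin} produces a word $w$ in $\langle a,g\rangle$ for which $J(waw^{-1},d)=\varnothing$, hence $[waw^{-1},d]=1$; but $[waw^{-1},d]$ is a nontrivial reduced word in $A(P_4)$, contradicting faithfulness. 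Nothing in your proposal plays the role of the zones $Z(Y)$, the disjointness Lemma~\ref{l:z-disjoint}, or the successive-conjugation mechanism --- and these are precisely what make the infinitude of witnesses provable.
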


In Proposition \ref{p:abcd} above and in what follows, $\pi_0$ denotes the set of connected components.

The second step of the proof of Theorem \ref{thm:p4} consists of derivative estimates of the diffeomorphisms $\{\phi(v)\mid v\in V(P_4)\}$; see Section \ref{s:config-interval}.

In the last section, we will give a generalization of Theorem~\ref{thm:modc2} to finitely generated groups quasi--isometric to mapping class groups.

\subsection{Notes and references}

\subsubsection{Higher rank phenomena for mapping class groups}
Theorem \ref{thm:modc2} is an example of a higher rank phenomenon for mapping class groups. The reader may compare Theorem~\ref{thm:modc2} to the following result of Ghys~\cite{Ghys1999} and Burger--Monod~\cite{BM1999}, which built on work of Witte~\cite{Witte1994}:

\begin{thm}
Let $G$ be a lattice in a simple Lie group of higher rank. Then every continuous action of $G$ on the circle has a finite orbit, and every $C^1$ action of $G$ on the circle factors through a finite group.
\end{thm}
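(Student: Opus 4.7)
The plan is to follow the bounded-cohomological strategy of Ghys, whose essential higher-rank input is Burger--Monod's computation of the second bounded cohomology of lattices in simple Lie groups. I would split the proof along the two assertions of the theorem.

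For the continuous assertion, I attach to a homomorphism $\rho\co G\to\Homeo_+(S^1)$ its bounded Euler class $e_b(\rho)\in H^2_b(G;\Z)$, pulled back from the universal bounded Euler class of $\Homeo_+(S^1)$. Ghys's theorem says that $\rho$ admits a finite orbit if and only if the image of $e_b(\rho)$ in $H^2_b(G;\R)$ is a torsion class; more generally, $e_b(\rho)$ classifies $\rho$ up to semi-conjugacy. The input from higher rank is Burger--Monod's restriction isomorphism $H^2_{cb}(H;\R)\cong H^2_b(G;\R)$ when $G$ is a lattice in the simple higher-rank Lie group $H$, together with the explicit fact that $H^2_{cb}(H;\R)$ is zero in the non-Hermitian case and one-dimensional, spanned by the K\"ahler class, in the Hermitian case. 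In the non-Hermitian case the real bounded Euler class vanishes, so $\rho$ is semi-conjugate to an action by rotations; combined with finiteness of $G^{\ab}$ (a consequence of Kazhdan's Property (T)), the rotation factor has finite image and $\rho$ admits a finite orbit. In the Hermitian case I would use the universal Gromov-norm bound $\|e_b(\rho)\|\le 1/2$ together with Burger--Monod's sharp estimate on the norm of the K\"ahler class to force the real bounded Euler class to vanish, whence a finite orbit again follows via Ghys.

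For the $C^1$ assertion, I start from the finite orbit produced above and pass to a finite-index subgroup $G'<G$ fixing some point $p\in S^1$. Differentiation at $p$ gives a homomorphism $D_p\co G'\to\R_{>0}$; since $G'$ still has Property (T), its abelianization is finite and $D_p$ has finite image, so after replacing $G'$ by a further finite-index subgroup $G''$ I may assume $D\rho(g)(p)=1$ for every $g\in G''$. Thurston's stability theorem then implies that either $G''$ acts trivially on some neighborhood of $p$, or $G''$ surjects onto $\R$; the second alternative is excluded by Property (T), so the set $U$ of points admitting a neighborhood pointwise fixed by $G''$ is nonempty and open. To propagate this to $U=S^1$, I note that at any $q\in\partial U$ continuity of the $C^1$ cocycle on $U$ forces $D\rho(g)(q)=1$ for every $g\in G''$, so Thurston stability applies again at $q$ and places $q$ in $U$, a contradiction. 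Hence $U$ is clopen and equals $S^1$ by connectedness, so $\rho$ is trivial on $G''$ and factors through the finite group $G/G''$.

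The main obstacle is the Hermitian case of the first assertion: comparing the universal $1/2$-bound on the bounded Euler class of a circle action against the precise Gromov norm of the K\"ahler class, so as to rule out any nonzero multiple from being realized, is the technical heart of the argument. Once a finite orbit is in hand, the $C^1$ step is comparatively soft and reduces to Thurston stability together with a connectedness propagation.
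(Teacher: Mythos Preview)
The paper does not prove this theorem. It is stated in the ``Notes and references'' subsection purely as background, with attribution to Ghys~\cite{Ghys1999} and Burger--Monod~\cite{BM1999} (building on Witte~\cite{Witte1994}), and no argument is given. There is therefore no ``paper's own proof'' to compare your proposal against.

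That said, your sketch is a faithful outline of the approach in the cited literature: Ghys's bounded Euler class together with Burger--Monod's computation of $H^2_b$ for higher-rank lattices handles the existence of a finite orbit, and the $C^1$ upgrade via Thurston stability plus Property~(T) (trivial derivative at a fixed point, then open--closed propagation) is the standard mechanism. One point to tighten: in the Hermitian case, the norm comparison you describe is not quite how the argument runs. The cleaner route is that Burger--Monod identify $H^2_b(G;\bR)$ with the continuous bounded cohomology of the ambient group via restriction/transfer, and then one argues that the bounded Euler class of a circle action, once extended, must vanish because the ambient simple higher-rank group admits no nontrivial continuous action on $S^1$; the ``$\|e_b\|\le 1/2$ versus norm of the K\"ahler class'' inequality does not by itself rule out a nonzero multiple without further input. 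Your identification of this as the delicate step is correct, but the mechanism you propose would need to be replaced or supplemented.
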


Lattices in higher rank simple Lie groups have Kazhdan's property (T).
Navas~\cite{Navas2002ASENS} proved the following non-smoothability result for all countable groups with property (T).
Here, we use the terminology \emph{$C^{1+\tau}$ diffeomorphism} for a $C^1$ diffeomorphism whose first derivative is H\"older continuous of exponent $\tau>0$:

\begin{thm}\label{thm:navas}
Let $G$ be a countable group with property (T) and $\tau>1/2$.
Then every $C^{1+\tau}$ action of $G$ on the circle factors through a finite group.
\end{thm}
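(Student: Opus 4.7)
The strategy is to combine Kazhdan's property (T) with an affine isometric action of $\Diff_+^{1+\tau}(S^1)$ on a Hilbert space $\HH$ whose defining cocycle encodes the $C^{1+\tau}$ dynamics on $S^1$; the hypothesis $\tau>1/2$ is precisely what allows the cocycle to take values in $\HH$.

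First I would introduce the Hilbert space $\HH = L^2\bigl((S^1\times S^1)\setminus \Delta,\,d\mu\bigr)$, where $\Delta$ is the diagonal and $d\mu(x,y) = dx\,dy/|x-y|^2$ (or the analogous rotation--invariant kernel). The group $\Diff_+^{1+\tau}(S^1)$ acts unitarily on $\HH$ via the diagonal pullback twisted by a half-power of the Jacobian, and one writes down a $1$-cocycle $c\co \Diff_+^{1+\tau}(S^1)\to\HH$ of the form
\[
c(g)(x,y) = \frac{(g'(x)\,g'(y))^{1/2}\,|x-y|}{|g(x)-g(y)|} - 1,
\]
which measures the failure of $g$ to preserve $\mu$. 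A Taylor-type expansion combined with the H\"older bound $|g'(x)-g'(y)|\leq C|x-y|^{\tau}$ gives $|c(g)(x,y)|\leq C'|x-y|^{\tau}$ near the diagonal, so the integrand in $\|c(g)\|_{\HH}^2$ decays like $|x-y|^{2\tau-2}$, which is integrable across the diagonal exactly when $\tau>1/2$.

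Given a $C^{1+\tau}$ action $\phi\co G\to \Diff_+^{1+\tau}(S^1)$, the pullback $\sigma(g):=\pi(\phi(g)) + c(\phi(g))$ is an affine isometric action of $G$ on $\HH$. Property (T) forces a $\sigma$-fixed vector $v\in\HH$, which upon unwinding produces a $\phi(G)$-invariant probability measure $\nu$ on $S^1$ that is absolutely continuous with continuous positive density. Since $\nu$ has full support, the rotation number homomorphism $\rho\co \phi(G)\to\R/\Z$ associated to $\nu$ is injective: any $\nu$-preserving orientation-preserving homeomorphism with a fixed point must be the identity. Thus $\phi(G)$ injects into $\R/\Z$, hence is abelian; as a quotient of a property (T) group it still has property (T), and a countable abelian group with property (T) is finite. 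Therefore $\phi$ factors through a finite quotient of $G$.

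The main technical obstacle is the sharp integrability estimate, which requires $\tau>1/2$ to overcome a logarithmic divergence at the diagonal, so that the cocycle $c$ actually lands in $\HH$ rather than only in a larger completion. A secondary subtlety is translating the abstract Hilbert fixed vector $v$ into a genuine $\phi(G)$-invariant continuous positive density on $S^1$, which uses the concrete form of $\pi$ to identify $v$ with the square-root of that density and rules out degenerate or singular invariant measures.
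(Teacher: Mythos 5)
The paper does not prove Theorem~\ref{thm:navas}; it is quoted verbatim as a result of Navas~\cite{Navas2002ASENS}, so there is no in-house argument to compare against. That said, your sketch does reproduce the correct overall framework of Navas's proof: the Hilbert space $L^2\bigl((S^1\times S^1)\setminus\Delta,\,dx\,dy/|x-y|^2\bigr)$, the unitary twisted by the half-power of the Jacobian, the Liouville/cross--ratio cocycle $c(g)(x,y)=\sqrt{g'(x)g'(y)}\,|x-y|/|g(x)-g(y)|-1$, the estimate $|c(g)(x,y)|\lesssim|x-y|^\tau$ near the diagonal, the resulting integrability condition $2\tau-2>-1$ (i.e.\ $\tau>1/2$), and property (T) forcing a fixed vector of the affine action. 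All of that is accurate and is exactly where the hypothesis $\tau>1/2$ enters.

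The gap is in the step ``which upon unwinding produces a $\phi(G)$-invariant probability measure $\nu$ on $S^1$ that is absolutely continuous with continuous positive density.'' A fixed vector $\xi$ of the affine action yields, upon setting $F(x,y)=(\xi(x,y)+1)^2/|x-y|^2$, a $G$-invariant measure $\Lambda=F(x,y)\,dx\,dy$ on $S^1\times S^1\setminus\Delta$ --- a measure on \emph{pairs}, not on the circle. Such a pair-invariant measure does not in general yield an invariant measure on $S^1$: the Liouville measure $dx\,dy/(4\sin^2\pi(x-y))$ is preserved by any Fuchsian group, yet non-elementary Fuchsian groups (e.g.\ $\PSL(2,\bZ)$, which admittedly lacks (T) but exhibits the failure of the implication you are relying on) have no invariant probability measure on $\partial\bH^2$. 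Moreover, the fixed vector $\xi$ is only an element of $L^2$, so any attempt to ``restrict to the diagonal'' to extract a density on $S^1$ would require a continuity or regularity statement about $\xi$ that is not available. The real content of Navas's argument, after obtaining $\Lambda$, is a dynamical analysis showing that the existence of such an invariant Liouville-type geodesic current is incompatible with a nonelementary action; this is substantially more delicate than the rotation-number argument you invoke, and it cannot be replaced by the assertion that $\Lambda$ descends to a $G$-invariant measure on $S^1$. The final paragraph of your proposal (full-support invariant measure on $S^1$ implies injective rotation number, abelian image, and a countable abelian group with (T) is finite) is correct as far as it goes, but it rests on a premise that has not been established.
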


Navas refined Theorem \ref{thm:navas} to groups with relative property (T) in \cite{Navas2005}, and produced a finitely generated, locally indicable group with no faithful differentiable action on the interval in \cite{Navas2010}. In those papers, as in the present one, the main subtleties arise from differentiability.

Braid groups and mapping class groups of genus two surfaces (virtually) admit nontrivial homomorphisms to $\bZ$, so that these groups do admit (non--faithful) actions on the circle without a global fixed point, which can even be taken to be analytic.

In this context, Theorem \ref{thm:modc2} can be viewed as a generalization of known results to ``lattices in mapping class groups", i.e. finite index subgroups of mapping class groups. The reader may note that Theorem \ref{thm:modc2} resolves a question posed by Labourie in his 2002 ICM talk, as well as by Farb (see page 47 of \cite{Farb2006}).

For $C^1$ actions, we can contrast Theorem \ref{thm:p4} with the following result of Farb--Franks~\cite[Theorem 1.6]{FF2003}:
\begin{thm}\label{thm:rnilp}
If $X$ be a connected one--manifold,
then every finitely generated residually torsion--free nilpotent group embeds into the orientation--preserving $C^1$ diffeomorphism group of $X$.
\end{thm}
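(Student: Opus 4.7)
The plan is to reduce, by virtue of the residual structure, to the case of a single finitely generated torsion--free nilpotent group, and then to combine such actions on a shrinking family of pairwise disjoint subintervals converging to a point. Throughout, one must work strictly at the $C^1$ level, since by the theorem of Plante--Thurston any nilpotent subgroup of $\Diff^{\,2}_+$ of a compact one--manifold is abelian.

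First I would construct, for each finitely generated torsion--free nilpotent group $N$, a faithful homomorphism $\rho_N\co N\to\Diff^{\,1}_+[0,1]$ whose image consists of diffeomorphisms supported in the open interior $(0,1)$, and such that the derivatives of the generators at $0$ and $1$ are equal to $1$. This is done by induction on the nilpotency class. The abelian case uses commuting $C^\infty$ time--one maps of vector fields compactly supported in $(0,1)$ that generate a free abelian group of the correct rank. For the inductive step, one writes $N$ as a central extension $1\to A\to N\to N/Z\to 1$ with $A$ central, realizes $N/Z$ on a subinterval by the inductive hypothesis, and inserts generators for $A$ via carefully designed $C^1$ diffeomorphisms commuting with the already--constructed action. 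The Plante--Thurston obstruction is what forces the inductive step to be carried out at precisely the $C^1$ level.

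Second, given a finitely generated residually torsion--free nilpotent group $G$, pick a descending filtration $G=G_0\supset G_1\supset\cdots$ with $G/G_i$ torsion--free nilpotent and $\bigcap_i G_i=\{1\}$. Select pairwise disjoint closed intervals $J_1,J_2,\ldots\subset X$ accumulating at a single point $p\in X$. Using the first step, choose embeddings $\rho_i\co G/G_i\inj \Diff^{\,1}_+(J_i)$ supported in the interior of $J_i$, and precompose with the projection $G\twoheadrightarrow G/G_i$. Assembling these as the identity off $\bigcup_i J_i$ produces a homomorphism $\rho\co G\to\Homeo_+(X)$. Faithfulness is automatic: if $1\ne g\in G$, then $g\notin G_i$ for some $i$, so $g$ acts nontrivially on $J_i$.

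The main obstacle is to guarantee that $\rho$ is $C^1$ at the accumulation point $p$. To handle this, I would conjugate each $\rho_i$ by a large affine dilation of $J_i$, which forces all first derivatives of $\rho_i(g)$ to lie in a small neighborhood of $1$. After shrinking the intervals $J_i$ sufficiently rapidly and choosing the rescalings to tend to infinity with $i$, one arranges that for every fixed $g\in G$ the quantity $\|\rho(g)'-1\|_{C^0(J_i)}\to 0$ as $i\to\infty$. This yields $\rho(g)\in\Diff^{\,1}_+(X)$ with derivative $1$ at $p$, and is the technical heart of the argument. It is precisely at this rescaling step that the regularity cannot be upgraded to $C^{1+\alpha}$, reconciling the theorem with the obstruction of Theorem~\ref{thm:p4}.
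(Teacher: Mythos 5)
Theorem~\ref{thm:rnilp} is not proved in the paper; it is cited from Farb--Franks~\cite[Theorem 1.6]{FF2003}. Your two--stage plan — realize each torsion--free nilpotent quotient $G/G_i$ by $C^1$ diffeomorphisms supported in the interior of an interval, then place these actions on disjoint intervals $J_i$ shrinking to a point $p$ and check $C^1$--smoothness at $p$ — is the correct architecture and agrees with theirs. However, the step you yourself flag as ``the technical heart'' contains a genuine error. Conjugating $\rho_i$ by an affine dilation does not move first derivatives toward $1$: if $\lambda(x)=\epsilon x$ and $g=\lambda f\lambda^{-1}$, then $g'(y)=f'(y/\epsilon)$, so the set of derivative values, and in particular $\|g'-1\|_{C^0}$, is unchanged. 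The rescaling therefore gives no control at $p$, and the claimed convergence $\|\rho(g)'-1\|_{C^0(J_i)}\to 0$ does not follow. What is actually needed is that for every $\delta>0$ the group $G/G_i$ admits a faithful $C^1$ action supported in $(0,1)$ whose generators have derivatives within $\delta$ of $1$; this must be supplied by the construction of the action itself, not by a post hoc affine conjugation.

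The single--group step also deserves scrutiny. Your induction on nilpotency class requires, at each stage, choosing lifts of $N/Z$ and inserting central generators so that the iterated commutators of the lifts realize the prescribed central elements; this is precisely the phenomenon that Kopell's Lemma forbids at $C^{1+\mathrm{bv}}$ regularity, and it is not clear the induction closes as stated. Farb--Franks avoid this entirely: they embed the finitely generated torsion--free nilpotent group into a unitriangular integer matrix group $N_k(\mathbb{Z})$ (Mal'cev/Jennings) and construct an explicit family of faithful $C^1$ actions of $N_k(\mathbb{Z})$ with a tunable parameter. That parameter is exactly what supplies the small--derivative property missing from your argument, so the glueing at $p$ goes through. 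You should either reduce to $N_k(\mathbb{Z})$ as they do, or replace the affine--rescaling step by a construction that comes with explicit derivative control built in.
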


In particular, right-angled Artin groups, pure braid groups, and Torelli groups of surfaces all admit faithful $C^1$ actions on the circle. In particular, Theorem \ref{thm:p4} cannot be weakened to cover $C^1$ actions of right-angled Artin groups. In light of Parwani's Theorem~\cite{Parwani2008}, one may ask the following:

\begin{que}\label{que:c1}
Let $S$ be a surface with genus at least two. Does $\Mod(S)$ virtually admit a faithful $C^1$ action on a compact one--manifold?
\end{que}

Ivanov conjectured that every finite index subgroup of a higher genus mapping class group has finite abelianization~\cite{Ivanov1992}. Parwani~\cite[Conjecture 1.6]{Parwani2008} noted that Ivanov's conjecture anticipates a 
 negative answer to Question~\ref{que:c1} when the genus is at least four.

\subsubsection{Other group actions}
Theorem \ref{thm:p4} shows that a group which contains $A(P_4)$ cannot act by $\Cbv$ diffeomorphisms on the circle. There are many groups which contain $A(P_4)$, including many subgroups of mapping class groups which do not have finite indices.

We recall the following: let $\Aut(F_n)$ and $\Out(F_n)$ denote the automorphism and the outer automorphism groups of the free group $F_n$ on $n$ generators. The \emph{Torelli group} $\mathcal{I}(S)$ of $S$ is defined to be the kernel of the standard representation $\Mod(S)\to \Aut(H_1(S,\bZ))$. When $S$ has a distinguished marked point, one can write $\mathfrak{J}_k(S)$ for the $k^{th}$ term of the \emph{Johnson filtration}, i.e. $\mathfrak{J}_k(S)$ is the kernel of the natural action of $\Mod(S)$ on the $k^{th}$ universal nilpotent quotient $G_k$ of $\pi_1(S)$. In other words, we set $\gamma_1(\pi_1(S))=\pi_1(S)$ and $\gamma_{k+1}(\pi_1(S))=[\pi_1(S),\gamma_k(\pi_1(S))]$. Then, $G_k=\pi_1(S)/\gamma_k(\pi_1(S))$. Note that with this definition, $\mathfrak{J}_1(S)=\Mod(S)$ and $\mathfrak{J}_2(S)=\mathcal{I}(S)$.

\begin{cor}\label{cor:other groups}
No finite index subgroup of the following groups acts faithfully by $\Cbv$ diffeomorphisms on $M$:
\be
\item
$\Aut(F_n)$ and $\Out(F_n)$ for $n\geq 3$;
\item
$\mathcal{I}(S)$ and $\mathfrak{J}_3(S)$, where $S$ has genus at least $3$;
\item
$\mathfrak{J}_k(S)$, where $S$ has genus at least $5$ if $k > 3$. 
\ee
\end{cor}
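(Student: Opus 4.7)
The plan is to reduce each case to Theorem~\ref{thm:p4} by embedding $A(P_4)$ into each of the listed groups. Indeed, if $A(P_4) \hookrightarrow G$ and $H \leq G$ has finite index, then the preimage of $H$ in $A(P_4)$ is a finite index subgroup, and any faithful $\Cbv$ action of $H$ on $M$ would restrict (via the embedding) to a faithful $\Cbv$ action of this finite index subgroup of $A(P_4)$, contradicting Theorem~\ref{thm:p4}. The task thus reduces to exhibiting $A(P_4)$ as a subgroup of each of $\Aut(F_n)$, $\Out(F_n)$, $\mathcal{I}(S)$, $\mathfrak{J}_3(S)$, and $\mathfrak{J}_k(S)$, under the stated genus and rank hypotheses.

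For item (1), I would use the Dehn--Nielsen--Baer theorem to embed $\Mod(S)$ into $\Out(F_n)$ (and its usual lift to $\Aut(F_n)$) for a suitable punctured surface $S$ with $\pi_1(S) \cong F_n$. For $n \geq 3$ one can arrange that $S$ admits four simple closed curves whose intersection pattern realizes $P_4$ as an induced subgraph of its curve graph. Koberda's embedding theorem for right-angled Artin groups into mapping class groups then produces $A(P_4) \hookrightarrow \Mod(S)$, realized by sufficiently high powers of the corresponding Dehn twists, and pushing forward gives $A(P_4) \hookrightarrow \Out(F_n), \Aut(F_n)$.

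For item (2), when $g \geq 3$ one can find four separating curves or bounding pairs in $S$ supported in four subsurfaces arranged in a $P_4$ configuration. The associated separating twists and bounding pair maps lie in $\mathcal{I}(S) = \mathfrak{J}_2(S)$, and (by an appropriate Koberda-type lemma) sufficiently high powers of them generate a copy of $A(P_4)$ inside $\mathcal{I}(S)$. For $\mathfrak{J}_3(S)$ the same configuration can be refined by replacing each generator with a commutator of separating twists along nested subsurfaces, which lands in the next term of the Johnson filtration while preserving the $P_4$ disjointness pattern. For item (3), one iterates this bracketing process $(k-2)$ times, using the lower central series of $\pi_1(S)$: the genus hypothesis $g \geq 5$ ensures there is enough room to house four iterated bracket elements whose supports maintain the $P_4$ pattern, so a Koberda-type argument again yields $A(P_4) \hookrightarrow \mathfrak{J}_k(S)$.

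The main obstacle I anticipate is in item (3): one must construct four elements of $\mathfrak{J}_k(S)$ whose geometric supports realize $P_4$ and whose sufficiently high powers satisfy no relations beyond those forced by disjointness. This amounts to a version of Koberda's right-angled Artin subgroup theorem adapted to deep terms of the Johnson filtration, and it is the technical crux of the corollary. Once this embedding is produced, the reduction from the first paragraph immediately rules out any faithful $\Cbv$ action of a finite index subgroup on $M$.
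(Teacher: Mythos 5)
Your reduction framework is correct and matches the paper: since Theorem~\ref{thm:p4} forbids even a virtual embedding of $A(P_4)$ into $\Diffb(M)$, it suffices to put $A(P_4)$ inside each listed group. Item (1) proceeds essentially as in the paper, which realizes $A(P_4)$ via a chain of four curves in $\Mod(S_{1,0,2})<\Out(F_3)$ (and $\Mod(S_{1,1,2})<\Aut(F_3)$), i.e.\ a concrete instance of the inclusion you invoke.

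For item (2), you are working harder than necessary and introducing a shaky step. A chain of four \emph{separating} curves on a closed genus~$\ge 3$ surface gives four separating twists whose intersection graph is $P_4$; by Koberda's theorem, high powers generate $A(P_4)$. Since separating twists already lie in the Johnson kernel $\mathfrak{J}_3(S)$ (a theorem of Johnson), the \emph{same} four twists land you in $\mathfrak{J}_3(S)$ with no need for commutators. Your proposal to ``replace each generator with a commutator of separating twists along nested subsurfaces'' is problematic: a commutator of twists need not be supported in a controllably small region, need not be of a type (Dehn twist or pseudo-Anosov on a subsurface) to which the Koberda/Clay--Leininger--Mangahas machinery applies, and could even be trivial or fail the disjointness pattern.

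For item (3), the gap is more serious. Iterating brackets $(k-2)$ times does not obviously produce nontrivial elements, does not give you control over their supports or their Nielsen--Thurston type, and there is no Koberda-type theorem available for arbitrary iterated commutators. The paper avoids explicit construction entirely. It takes a chain of four genus-two subsurfaces $S_1,\dots,S_4$ (possible once $g\ge 5$) and needs a pseudo-Anosov $\psi_i$ supported on each $S_i$ that lies in $\mathfrak{J}_k(S)$. This is obtained abstractly: $\mathfrak{J}_k(S)$ is a non-central normal subgroup, and one appeals to the fact (Ivanov; or, as the paper spells out, Dahmani--Guirardel--Osin's infinitely generated purely pseudo-Anosov normal subgroup $\mathfrak{N}$, combined with normality of $\mathfrak{J}_k(S)$ and the virtually cyclic centralizer of a pseudo-Anosov) that such a normal subgroup must contain pseudo-Anosov mapping classes on each subsurface. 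Then $A(P_4)$ is generated by sufficiently high powers of $\psi_1,\dots,\psi_4$ via the Clay--Leininger--Mangahas/Koberda right-angled Artin subgroup theorem. The key idea you are missing is to trade the explicit iterated-commutator construction for a normality/genericity argument that guarantees pseudo-Anosovs inside $\mathfrak{J}_k(S)$ with prescribed subsurface supports.
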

It was proved in \cite{BV2003} that $\Aut(F_n)$ does not admit a nontrivial action by circle homeomorphisms, though the Bridson--Vogtmann proof relies essentially on torsion inside $\Aut(F_n)$ and thus does not generalize to finite index subgroups of $\Aut(F_n)$. 
We will give a proof of Corollary \ref{cor:other groups} in Section \ref{s:raagmcg} below.

\subsubsection{Compactness and smooth actions}
Theorem \ref{thm:p4} also underlines a fundamental difference between compact and noncompact one--manifolds, as far as their diffeomorphism groups are concerned. We see in this paper that many right-angled Artin groups do not embed in $\Diffb(M)$ when $M$ is compact, though no such restrictions hold when $M$ is noncompact, as is illustrated by the following result of the authors~\cite{BKK2014}:

\begin{thm}\label{thm:bkk}
Every right-angled Artin group embeds into the orientation--preserving $C^\infty$ diffeomorphism group of $\bR$.
\end{thm}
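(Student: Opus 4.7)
The plan is to construct an explicit faithful action by hand, exploiting the fact that $\bR$ is noncompact and hence can accommodate infinitely many pairwise disjoint bump functions with prescribed nesting/crossing patterns. Let me enumerate $V(\Gamma)=\{v_1,\dots,v_n\}$ and fix once and for all a standard bump diffeomorphism $f_0\in\Diff_+^\infty(\bR)$ compactly supported on $[0,1]$, satisfying $f_0(x)>x$ on $(0,1)$, with all derivatives vanishing to infinite order at $0$ and $1$; this last property is what will allow gluing infinitely many rescaled copies together into a genuine $C^\infty$ diffeomorphism of $\bR$.

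For each $v_i$ I would associate a countable collection $\{J_{i,k}\}_k$ of pairwise disjoint closed intervals in $\bR$, and define $\phi_i=\phi(v_i)$ by placing a rescaled copy of $f_0$ on each $J_{i,k}$ and extending by the identity. The arrangement of the family $\{J_{i,k}:1\le i\le n,\,k\in\bZ\}$ is then dictated by $\Gamma$: if $\{v_i,v_j\}\in E(\Gamma)$, I arrange that every $J_{i,k}$ is either disjoint from every $J_{j,\ell}$, or is contained inside a $J_{j,\ell}$ that is totally $\phi_i$-invariant (so the corresponding bumps commute); if $\{v_i,v_j\}\notin E(\Gamma)$, I arrange for at least one pair $J_{i,k}$ and $J_{j,\ell}$ to cross properly (neither contains the other and they intersect but not equally), providing a Klein/ping--pong configuration which shows $\langle\phi_i,\phi_j\rangle\cong F_2$. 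Such a placement can be made by a recursive construction working outward from the origin and using the noncompactness of $\bR$ to supply fresh disjoint room at each stage; the $C^\infty$-regularity of $\phi_i$ follows from the flatness of $f_0$ at the endpoints.

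The core difficulty, as always for such constructions, is Step~4: verifying that the resulting homomorphism $\phi\co A(\Gamma)\to\Diff_+^\infty(\bR)$ is faithful on \emph{all} of $A(\Gamma)$, not merely on two-generator subgroups. For this I would proceed by induction on $|V(\Gamma)|$ using a splitting of $A(\Gamma)$ as an HNN extension or amalgam across the star of a vertex: writing $A(\Gamma)=A(\Gamma\setminus\{v\})*_{A(\lk v)}(A(\lk v)\times\langle v\rangle)$, a ping--pong argument on the Bass--Serre tree of this splitting can be mimicked at the level of the real line by tracking how $\phi(v)$ and $\phi(w)$ with $w\notin\lk v\cup\{v\}$ drag around the nested family of intervals $\{J_{i,k}\}$. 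Equivalently, one may use the Hermiller--Meier/Servatius normal form for $A(\Gamma)$ and show inductively that any normal-form word acts nontrivially on some orbit of endpoints in the configuration of intervals.

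The hard part will be precisely this faithfulness step; Steps~1--3 are geometric and essentially routine once the combinatorics of the interval configuration has been chosen correctly. A secondary technical issue is verifying that the infinite product of smooth bumps defining each $\phi_i$ is actually $C^\infty$ on $\bR$ and not merely $C^\infty$ on each $J_{i,k}$; this is handled by the requirement that $f_0$ is flat at its boundary, together with uniform tail estimates controlling all derivatives of the rescalings. The upshot contrasts sharply with Theorem~\ref{thm:p4}: on $\bR$ one never has to worry about a compact fixed set where the regularity constraint forces algebraic obstructions, whereas on $M$ compact the Muller--Tsuboi / Kopell--style derivative estimates recalled in Section~\ref{s:onedim} become lethal.
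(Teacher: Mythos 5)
First, a caveat about the target: the paper you are reading does not prove this statement. Theorem~\ref{thm:bkk} is quoted from the authors' companion paper \cite{BKK2014} (\emph{Right-angled Artin subgroups of the $C^\infty$ diffeomorphism group of the real line}), so there is no ``paper's own proof'' here to compare your attempt against. That said, your general flavor --- families of flat bump functions placed on intervals whose disjointness/overlap pattern encodes the graph $\Gamma$ --- is the right neighborhood for the construction; the real work in \cite{BKK2014}, as in your sketch, is the faithfulness verification.

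There are two genuine problems in what you wrote. The first is local and concrete: your commutation rule for adjacent vertices is wrong as stated. You allow $J_{i,k}\subsetneq J_{j,\ell}$ provided $J_{j,\ell}$ is ``$\phi_i$--invariant'' --- but that invariance is automatic once $\supp\phi_i\cap J_{j,\ell}\sse J_{i,k}$, and it is not the right condition. Since you place a single rescaled copy of $f_0$ on all of $J_{j,\ell}$, the map $\phi_j$ has no interior fixed points in $J_{j,\ell}$, so $\phi_j(J_{i,k})\ne J_{i,k}$; hence $\phi_j\phi_i\phi_j^{-1}$ is a bump on a \emph{different} subinterval and does not equal $\phi_i$. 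Proper nesting of bump supports generically \emph{destroys} commutativity rather than preserving it (this is exactly the phenomenon Kopell's Lemma exploits). The only safe rule with single-bump generators is: commuting generators must have \emph{disjoint} supports. Whether one can realize the non-edge relation of an arbitrary $\Gamma$ by a family of interval unions in $\bR$ with this disjointness constraint, while also setting up crossings that force free subgroups, is already a nontrivial combinatorial fact that you would need to establish.

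The second problem is the one you flag yourself, and it really is the whole theorem: faithfulness on all of $A(\Gamma)$, not merely on pairs of generators. Your proposal to induct on $|V(\Gamma)|$ via the splitting $A(\Gamma)=A(\Gamma\setminus\{v\})*_{A(\lk v)}A(\st v)$ and play ping--pong on the Bass--Serre tree is a plausible strategy, but nothing in your interval configuration has been set up to cooperate with that splitting. In particular, the inductive hypothesis cannot simply be ``$A(\Gamma\setminus\{v\})$ acts faithfully'': you need dynamical control (e.g., a nested family of attracting/repelling domains, or an invariant that tracks normal-form length) that is strong enough to survive amalgamation, and such control must be \emph{built into} the original placement of the intervals, not discovered afterward. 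As written, Steps~1--3 do not hand Step~4 the data it would need, so the argument has a gap at its center. The $C^\infty$ gluing issue you raise in your final paragraph (flatness of $f_0$ at endpoints plus tail estimates on derivatives) is, by contrast, genuinely routine and not the obstruction.
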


Actions on noncompact manifolds have applications to mapping class groups of surfaces in their own right:

\begin{que}
Let $n\geq 4$. 
Does $B_n$ virtually admit a faithful $C^\infty$ action on $\bR$?
\end{que}

A negative answer would prove that a braid group on at least four strands does not virtually embed into a right-angled Artin group, by Theorem \ref{thm:bkk}. It has recently been proved that such braid groups do not virtually admit \emph{cocompact} actions on CAT(0) cube complexes~\cite{Haettel2015}, \cite{HJP2015}.

\subsubsection{Structure of finitely generated subgroups of the diffeomorphism group}
Theorems \ref{thm:modc2} and \ref{thm:p4} raise natural questions about the structure of finitely generated subgroups of $\Diffb(M)$, which are known to be quite complicated~\cite{Ghys2001}. Already, we see that the possible right-angled Artin subgroups of $\Diffb(M)$ are rather restricted whenever $M$ is a compact one--manifold. Indeed, Corollary \ref{cor:p4free} states that if $\gam$ is a graph and $A(\gam)$ acts faithfully by $\Cbv$ diffeomorphisms on $M$ then $\gam$ must be $P_4$--free. The class of $P_4$--free graphs is well--understood: it is the smallest class of finite simplicial graphs which contains a single vertex graph, and which is closed under finite disjoint unions and finite joins. See~\cite{CLB1981} and also~\cite{KK2013} for a more detailed discussion of $P_4$--free graphs and the right-angled Artin groups defined by such graphs.

Let us denote by $\Diffb(I,\partial I)$ the group of $\Cbv$ diffeomorphisms of $I$ which are the identity near the boundary of $I$. By concatenating intervals, 
we see that if $G$ and $H$ are finitely generated subgroups of $\Diffb(I,\partial I)$,
then $G\times H$ occurs as a subgroup of $\Diffb(I,\partial I)$.
A much more subtle question is the following:

\begin{que}\label{q:free}
Let $G,H<\Diffb(M)$ be finitely generated subgroups. Does $G*H$ occur as a subgroup of $\Diffb(M)$?
\end{que}

A positive answer to this question would characterize the right-angled Artin subgroups of $\Diffb(M)$ as exactly the ones with $P_4$--free defining graphs. 
Recall that a diffeomorphism of a space is \emph{fully supported} if the fixed point set has empty interior~\cite{FF2001}.
The referee has pointed out the following partial answer to Question~\ref{q:free}: If $G$ and $H$ consist of fully supported diffeomorphisms, then
one can embed $G*H$ into $\Diff_+^2(M)$ by conjugating one of $G$ and $H$ by some diffeomorphism $f$,
and the choice of such an $f$ is dense in $\Diff_+^2(M)$.
Similar genericity arguments are described in \cite[Proposition 4.5]{Ghys2001} and in \cite[Theorem 2.1]{Rivas2012}.

\subsubsection{Sections for the mapping class group}
Let \[\pi\colon \Homeo_+(S)\to\Mod(S)\] be the projection that takes a homeomorphism of $S$ its mapping class. A well--known result of Markovic~\cite{Markovic2007}, which is closely related to mapping class group actions on the circle, says that $\pi$ does not admit a section whenever $S$ is a closed surface with genus at least six.

\begin{que}
Let $G<\Mod(S)$ be a finite index subgroup. Does there exist a section $s\colon G\to\Diff^1_+(S)$ which splits $\pi$? How about to $\Homeo_+(S)$?
\end{que}

\section{Acknowledgements}
The authors thank N. Kang, S. Lee, J. McCammond, C. McMullen, A. Navas, D. Rolfsen, and M. Sapir for helpful discussions and comments on an earlier draft of this paper. The authors are particularly grateful to B. Farb and A. Wilkinson for helpful comments. The authors thank the hospitality of the Mathematisches Forschungsinstitut Oberwolfach workshop ``New Perspectives on the Interplay between Discrete Groups in Low--Dimensional Topology and Arithmetic Lattices", where part of this research was carried out, and the hospitality of the Tsinghua Sanya International Mathematics Forum, where this research was completed.

The authors are grateful to an anonymous referee who provided many useful comments which significantly improved the exposition.

The first-named author was partially supported by the ERC Grant Nb. 10160104.
The second-named author was supported by Basic Science Research Program through the National Research Foundation of Korea (NRF) funded by the Ministry of Science, ICT \& Future Planning (NRF-2013R1A1A1058646). The second-named author is also supported by Samsung Science and Technology Foundation (SSTF-BA1301-06).

\section{Right-angled Artin groups and finite index subgroups of mapping class groups}\label{s:raagmcg}
For our purposes, right-angled Artin groups are an essential tool for understanding all finite index subgroups of a given mapping class group. In this section, we summarize the relevant facts and reduce the results of this paper to Theorem \ref{thm:p4}.


\begin{lem}\label{lem:subgroup}
If a right-angled Artin group $A$ embeds into a group $G$,
then $A$ embeds into each finite index subgroup of $G$.
\end{lem}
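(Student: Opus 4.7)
The plan is to reduce the problem to a purely internal statement about right-angled Artin groups, namely: every finite index subgroup $K$ of $A(\Gamma)$ contains an isomorphic copy of $A(\Gamma)$. Granting this, given an embedding $\iota\colon A\hookrightarrow G$ and a finite index subgroup $H<G$, the intersection $K:=\iota(A)\cap H$ has finite index in $\iota(A)\cong A$; applying the internal statement to $K$ inside $\iota(A)$ then yields an embedding $A\hookrightarrow K\subseteq H$.

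To prove the internal statement, I would construct an injective endomorphism of $A=A(\Gamma)$ whose image lies in the given finite index subgroup $K$. Since $[A:K]<\infty$, for each vertex $v\in V(\Gamma)$ some positive power $v^{n_v}$ lies in $K$; letting $n$ be a common multiple of the integers $n_v$, one has $v^n\in K$ for every vertex $v$. The assignment $v\mapsto v^n$ respects the defining commutation relations of $A(\Gamma)$ and so extends to a homomorphism $\psi\colon A\to A$ with $\psi(A)\subseteq K$.

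The main task is to verify that $\psi$ is injective, and this is the one place where I would need a nontrivial structural input. I would invoke the normal form theorem for right-angled Artin groups: a non-trivial element $g\in A(\Gamma)$ admits a syllable decomposition $g=v_{i_1}^{a_1}v_{i_2}^{a_2}\cdots v_{i_k}^{a_k}$ with each $a_j\ne 0$, which is reduced in the sense that no sequence of commutation swaps along edges of $\Gamma$ brings two syllables on the same generator adjacent to one another. Applying $\psi$ multiplies every exponent by the fixed positive integer $n$, leaving the underlying sequence of generators and the signs of the exponents unchanged. Since the reducedness criterion depends only on these combinatorial data, $\psi(g)$ is again reduced and nonempty, hence non-trivial in $A(\Gamma)$. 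Therefore $\psi$ is an injection of $A(\Gamma)$ into $K$, which is all that is required. The RAAG normal form is the only non-formal ingredient; the remainder of the argument is bookkeeping.
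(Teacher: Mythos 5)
Your argument is correct and follows the same core strategy as the paper: both rest on the fact that $v\mapsto v^N$ defines an injective endomorphism of $A(\Gamma)$. The paper reaches a uniform power by first replacing $H$ with a normal finite index subgroup and taking $N=[G:H]$ (so that $g^N\in H$ for all $g$ by Lagrange in $G/H$), whereas you intersect $\iota(A)$ with $H$ and take a common multiple of the per-vertex indices; these are interchangeable bookkeeping choices. The one substantive point the paper asserts without proof is that $\langle v^N : v\in V(\Gamma)\rangle$ is isomorphic to $A(\Gamma)$, and your justification via the syllable normal form---reducedness of a word depends only on the sequence of generators, not on the (nonzero) exponents, so $v_{i_1}^{a_1}\cdots v_{i_k}^{a_k}\mapsto v_{i_1}^{na_1}\cdots v_{i_k}^{na_k}$ carries reduced words to reduced words---is a correct and welcome supplement.
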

\begin{proof}
Let $H$ be a finite index subgroup of $G$.
By taking a smaller finite index subgroup if necessary, we may assume $H$ is normal in $G$.
Let $\phi\co A\to G$ be the given injective homomorphism
and  $N=[G:H]$. Write $A=A(\Gamma)$.
Then $K=\langle v^N\mid v\in V(\gam)\rangle\le A$ is isomorphic to $A$
and $\phi(K)\le H$.
\end{proof}

Let $S$ be a surface. Recall that the \emph{curve graph} $\mC(S)$ is a graph whose vertices are isotopy classes of non--peripheral simple closed curves on $S$, and the edge relation is given by disjoint realization on $S$. A graph $\Gamma$ is called an \emph{induced subgraph} of $\mC(S)$
if there exists an injective map of graphs $\iota\colon\gam\to\mC(S)$ which preserves adjacency and non--adjacency.

\begin{thm}[See \cite{Koberda2012}]\label{thm:koberda}
Let $\gam<\mC(S)$ be a finite induced subgraph. Then there exists an injective homomorphism $A(\gam)\to\Mod(S)$.
\end{thm}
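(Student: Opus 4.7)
The plan is to realize $A(\gam)$ inside $\Mod(S)$ by taking sufficiently high powers of Dehn twists about curves realizing $\gam$. Choose pairwise minimally intersecting simple closed curves $\{c_v \mid v \in V(\gam)\}$ so that $c_v \cap c_w = \varnothing$ if and only if $\{v,w\} \in E(\gam)$, and let $T_v \in \Mod(S)$ denote the Dehn twist about $c_v$. For every $N \ge 1$, the relations $[T_v, T_w] = 1$ for $\{v,w\} \in E(\gam)$ show that the assignment $v \mapsto T_v^N$ extends to a homomorphism $\phi_N \co A(\gam) \to \Mod(S)$. The content of the theorem is the injectivity of $\phi_N$ for $N$ sufficiently large.

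For injectivity I would use a normal form for $A(\gam)$: every non-trivial element admits a reduced representative word $w = v_{i_1}^{k_1} \cdots v_{i_m}^{k_m}$ with $k_j \ne 0$, such that no two syllables involving the same vertex can be brought adjacent by applying the defining commutation relations. The Servatius normal form theorem asserts that $w = 1$ in $A(\gam)$ if and only if $m = 0$, so it suffices to show that for $N$ large, $\phi_N(w) \ne 1$ in $\Mod(S)$ whenever $w$ is non-trivially in normal form. The central tool is a ping-pong argument on the space $\mathcal{PML}(S)$ of projective measured laminations: for each vertex $v$, the high power $T_v^N$ acts with north-south dynamics with attractor $[c_v]$, and pushes any compact set of $\mathcal{PML}(S)$ not meeting a fixed neighborhood of the fixed laminations into an arbitrarily small neighborhood of $[c_v]$, provided $N$ is large enough.

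The main obstacle is that disjoint curves give commuting Dehn twists whose attractors in $\mathcal{PML}(S)$ are not separated, so a naive global ping-pong among all generators cannot succeed. The remedy I would implement is to carry out the ping-pong subsurface-by-subsurface using the Masur--Minsky subsurface projection machinery: given a non-trivial reduced word $w$, one locates a subsurface $Y \subset S$ (for instance, a regular neighborhood of the union of those curves $c_{i_j}$ whose vertices form an anticlique in $\gam$) on which the sequence of subsurface projections of the partial products of $w$ makes definite progress, with Behrstock's inequality guaranteeing that commuting generators disjoint from $Y$ do not interfere. For $N$ large enough this forces the subsurface projection of $\phi_N(w)(\alpha)$ and $\alpha$ to be far apart for a suitable curve $\alpha$, so $\phi_N(w) \ne 1$. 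The delicate bookkeeping among the many subsurfaces, and the verification that the chosen $N$ works simultaneously for every normal form word, is the technical heart of the argument, and one typically organizes it by induction on the complexity of $\gam$ together with a cutting argument along a star of a vertex.
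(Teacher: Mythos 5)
The paper does not actually prove Theorem~\ref{thm:koberda}: it cites \cite{Koberda2012} and only records the explicit construction, namely that the map $v\mapsto T_{\iota(v)}^N$ is injective for $N\gg0$. Your setup---choosing curves realizing $\gam$ with intersection pattern matching adjacency, and sending each vertex to a high power of the corresponding Dehn twist---reproduces exactly the homomorphism $\iota_{*,N}$ described in the paper, so the construction is right.

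For the injectivity argument, though, you are not sketching Koberda's proof but rather the essentially different and later proof of Clay--Leininger--Mangahas \cite{CLM2012}, which the present paper cites elsewhere (in the proof of Corollary~\ref{cor:other groups}(3)) as an alternative source. The subsurface--projection route you describe---Masur--Minsky projections, Behrstock's inequality, and bounding progress in a witnessing subsurface $Y$---is genuinely the CLM strategy and avoids some of the combinatorial overhead of Koberda's original argument. Koberda's own proof works instead by a direct ping-pong on $\mathcal{PML}(S)$: one chooses, for each vertex, nested neighborhoods of the set of projective classes of laminations fixed by the corresponding Dehn twist, and runs an induction on the vertex set of $\gam$ to handle the fact that commuting twists have overlapping limit sets. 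Both routes are correct; the subsurface--projection route is arguably cleaner and more quantitative, while Koberda's argument is more self-contained in terms of prerequisites. You should also note that your proposal stops short of a proof: the passage acknowledging ``the delicate bookkeeping\ldots is the technical heart of the argument'' is precisely the content that must be supplied (in CLM's language, choosing the witness subsurface and establishing the lower bound on its projection distance uniformly over reduced words), so as written this is a plan rather than an argument. Finally, a small point of hygiene: the uniformity of $N$ over all reduced words is automatic once one has a quasi-isometric embedding or a large-projection estimate with constants depending only on $N$ and $S$, so ``verification that the chosen $N$ works simultaneously for every normal form word'' is not really a separate difficulty once the main estimate is in place.
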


Explicitly, let us suppose a graph map $\iota\colon\gam\to\mC(S)$ preserves adjacency and non--adjacency. 
For each $N\in\N$, we get an induced map \[\iota_{*,N}\colon A(\gam)\to\Mod(S)\] by sending $v\mapsto T_{\iota(v)}^N$, where $T_{\iota(v)}$ denotes the Dehn twist about the curve $\iota(v)$. A more detailed version of Theorem \ref{thm:koberda} as proved in \cite{Koberda2012} is that for $N\gg 0$, the homomorphism $\iota_{*,N}$ is injective.

\begin{cor}\label{cor:subgroup}
The group $A(P_4)$ embeds into $\Mod(S)$ whenever $c(S)\ge 2$.
\end{cor}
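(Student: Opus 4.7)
The plan is to apply Theorem~\ref{thm:koberda}: it suffices to exhibit an induced copy of $P_4$ in the curve graph $\mathcal{C}(S)$ whenever $c(S) \ge 2$. I would split into two regimes based on the genus of $S$.

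For $g(S)=0$, the hypothesis gives $n+b \ge 5$. Label five of the marked points/boundary components by $1,2,3,4,5$, and for any two-element subset $A\subset\{1,2,3,4,5\}$ let $c_A$ denote the simple closed curve bounding a disk enclosing exactly the components of $A$. Each $c_A$ is non-peripheral because both sides contain at least two marked points or boundaries, and two such curves $c_A, c_B$ are disjoint if and only if $A\cap B = \varnothing$, since $|A|=|B|=2$ rules out nesting. The four subsets $\{1,2\},\{3,4\},\{1,5\},\{2,3\}$, listed in this order, then yield an induced $P_4$: the three consecutive pairs are disjoint while the three non-consecutive pairs each share a single element and so give intersecting curves.

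For $g(S)\ge 1$, the constraint $c(S)\ge 2$ forces $n+b\ge 2$ when $g=1$, while for $g\ge 2$ we can work inside a once-handled essential subsurface of $S$ with two boundary components. In either situation I would use: a non-separating meridian $m$ in a chosen handle; a longitude $\ell$ of the same handle, so that $m$ and $\ell$ are dual and intersect; a separating curve $s$ enclosing exactly two marked/boundary components $p_1, p_2$; and a fourth curve $c$ defined by cutting $S$ along $\ell$ (which produces a surface in which $\ell$ becomes two boundary circles $\ell^+, \ell^-$) and then choosing $c$ to be a simple closed curve that separates $\{\ell^+, p_1\}$ from $\{\ell^-, p_2\}$. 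Then $m-s-\ell-c$ is an induced $P_4$: the three consecutive pairs are disjoint by construction, $c$ crosses $s$ because it separates the two punctures enclosed by $s$, and $c$ crosses $m$ because $m$ lifts to an arc connecting $\ell^+$ to $\ell^-$ and so must cross any curve separating these two boundaries.

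Once an induced $P_4$ sits inside $\mathcal{C}(S)$, Theorem~\ref{thm:koberda} applied to the inclusion $P_4\hookrightarrow\mathcal{C}(S)$ immediately supplies an injective homomorphism $A(P_4)\to \Mod(S)$. The step I expect to be the most delicate is the $g\ge 2$ case of the second regime: one has to verify that when the construction is performed in a one-handled subsurface $X\subset S$, the subsurface $X$ is essential so that all four curves remain non-peripheral and non-isotopic in $\mathcal{C}(S)$, and that the intersection/disjointness pattern established inside $X$ is not altered by the inclusion into $S$.
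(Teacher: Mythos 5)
Your proposal is correct and follows the same high-level strategy the paper intends: exhibit an induced copy of $P_4$ in the curve graph $\mathcal{C}(S)$ for every surface with $c(S)\ge 2$ and then invoke Theorem~\ref{thm:koberda}. The explicit curve configurations you build differ slightly in flavor from the ``chain'' of curves the paper uses in its proof of Corollary~\ref{cor:other groups} (where consecutive curves intersect rather than being disjoint — an equivalent realization since $P_4$ is self-complementary), but the argument is sound and the subsurface-inclusion issues you flag in the $g\ge 2$ case are the standard facts about essential subsurfaces.
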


\begin{proof}[Proof of Theorem \ref{thm:modc2}]
For surfaces $S$ with $c(S)\geq 2$, the conclusion of the theorem follows from Theorem \ref{thm:p4}, combined with Lemma \ref{lem:subgroup} and Corollary \ref{cor:subgroup}. 
For surfaces with $c(S)< 2$, we have that $\Mod(S)$ is virtually a product of a free group and a cyclic group~\cite{CLM2014} and thus virtually admits a $\Cbv$ action on $M$; for example, see~\cite{Grabowski1988}.
\end{proof}

\begin{proof}[Proof of Corollary \ref{cor:other groups}]
By Theorem \ref{thm:p4}, it suffices to find a copy of $A(P_4)$ in each of these groups.

(1)
Note that the surface $S_{1,0,2}$ of genus one with two boundary components contains a \emph{chain} of four simple closed curves, by which we mean a collection of four pairwise non--isotopic essential simple closed curves $\{\gamma_1,\ldots,\gamma_4\}$ in minimal position with the property that $\gamma_i\cap\gamma_{i+1}\neq\varnothing$, but $\gamma_i\cap\gamma_j=\varnothing$ otherwise. See Figure \ref{f:chain} (a) below. By Theorem \ref{thm:koberda}, this realizes $A(P_4)$ in $\Mod(S_{1,0,2})<\Out(F_3)$. Equipping $S_{1,0,2}$ with a marked point realizes $\Mod(S_{1,1,2})$ as a subgroup of $\Aut(F_3)$.

(2)
A closed surface $S$ of genus at least $3$ contains a chain of four separating closed curves as shown in Figure~\ref{f:chain} (b). The curves are labeled by $a, b, c, d$ so that the intersection graph coincides with the graph shown in Figure~\ref{f:p4} which is a copy of $P_4$. By Theorem \ref{thm:koberda} this furnishes a copy of $A(P_4)$ in $\mathcal{I}(S)$. The reader may note that this actually furnishes $A(P_4)$ in $\mathfrak{J}_3(S)$.

(3) (Sketch) If $S$ has genus $5$ or more then there exists a chain of four subsurfaces $\{S_1,\ldots,S_4\}$ in $S$, where each $S_i$ is homeomorphic to a genus two surface with one boundary component. A chain of subsurfaces is defined analogously to a chain of simple closed curves, and a chain of two subsurfaces is illustrated in Figure \ref{f:chain} (c) below. Let $k>3$. 

For each $i=1,2,3,4$, we claim
the surface $S_i$ supports a pseudo-Anosov mapping classes $\psi_i$ which lie in the Johnson kernel $\mathfrak{J}_k(S)$. This follows from a well--known fact that every non--central normal subgroup of a mapping class group contains pseudo-Anosov elements (see \cite{Ivanov1992}). One way to see this fact is to appeal to a result of Dahmani--Guirardel--Osin in \cite{DGO2011}, which shows that the mapping class group contains an infinitely generated, purely pseudo-Anosov, normal subgroup $\mathfrak{N}$. If $1\neq \psi\in\mathfrak{J}_k(S)$ is pseudo-Anosov then there is nothing to show. If $\psi$ fails to be pseudo-Anosov then we take $1\neq n\in\mathfrak{N}$, and consider $[n,\psi]$. Since $\psi$ and $n$ will not commute as the centralizer of a pseudo-Anosov mapping class is virtually cyclic, we have that $\phi=[n,\psi]$ is nontrivial. Since $\mathfrak{J}_k(S)$ is normal, $\phi\in\mathfrak{J}_k(S)$. Since $\mathfrak{N}$ is normal, we also have $\phi\in\mathfrak{N}$ and is therefore pseudo-Anosov.

Finally, sufficiently large powers of $\{\psi_1,\ldots,\psi_4\}$ will generate a copy of $A(P_4)<\mathfrak{J}_k(S)$, by \cite[Theorem 1.1]{CLM2012} or \cite[Theorem 1.1]{Koberda2012}.
\end{proof}

\begin{figure}[htb!]
\subfloat[(a) A chain of curves]{
\begin{tikzpicture}[scale=.7,rotate=90]
\draw (-.5,0) node {};
\draw (0,-2) -- (9,-2);
\draw (0,2) -- (9,2);

\draw (4.5,.5) edge [out=0,in=90]  (5,0) edge [out=180,in=90] (4,0) ;
\draw (4.5,-.2) edge [out=0,in=-135]  (5.1,.1) edge [out=180,in=-45] (3.9,.1);

\draw  (4.5,2) edge [Sepia,thick,out=-75,in=75] (4.5,.5) edge [Sepia,thick,dashed,out=-105,in=105](4.5,.5);

\draw  (4.5,-.2) edge [OliveGreen,thick,out=-75,in=75] (4.5,-2) edge [OliveGreen,thick,dashed,out=-105,in=105]  (4.5,-2);

\draw (1,2) edge [red,thick,out=-80, in=180] (4.5,-.7);
\draw [dashed] (1,2) edge [red,thick,out=-100, in=180] (4.5,-1.35);
\draw (8,2) edge [red,thick,out=-100, in=0] (4.5,-.7);
\draw [dashed] (8,2) edge [red,thick,out=-80, in=0] (4.5,-1.35);

\draw [dashed] (1,-2) edge [blue,thick,out=80, in=-180] (4.5,.9);
\draw  (1,-2) edge [blue,thick,out=100, in=-180] (4.5,1.35);
\draw [dashed] (8,-2) edge [blue,thick,out=100, in=0] (4.5,.9);
\draw (8,-2) edge [blue,thick,out=80, in=0] (4.5,1.35);

\node [OliveGreen] at (5,-1.7) {\footnotesize $\gamma_4$};
\node [Sepia] at (5,1.7) {\footnotesize $\gamma_1$};
\node [blue] at (8.5,-1) {\footnotesize $\gamma_2$};
\node [red] at (8.5,1) {\footnotesize $\gamma_3$};
\end{tikzpicture}
}
$\qquad\qquad$
\subfloat[(b) A chain of four separating curves]
{\begin{tikzpicture}[scale=.64,rotate=90]
\draw (2,-2) -- (11,-2);
\draw (2,2) edge [out=180,in=180] (2,-2) edge (11,2);

\draw (2,.5) edge [out=0,in=90]  (2.5,0) edge [out=180,in=90] (1.5,0) ;
\draw (2,-.2) edge [out=0,in=-135]  (2.6,.1) edge [out=180,in=-45] (1.4,.1);
\draw (6,.5) edge [out=0,in=90] (6.5,0) edge [out=180,in=90] (5.5,0);
\draw (6,-.2) edge [out=0,in=-135] (6.6,.1) edge [out=180,in=-45] (5.4,.1);
\draw (10,.5) edge [out=0,in=90] (10.5,0) edge [out=180,in=90] (9.5,0);
\draw (10,-.2) edge [out=0,in=-135] (10.6,.1) edge [out=180,in=-45] (9.4,.1);

\draw (6,1.5)  edge  [blue,thick,out=180, in=30] (2.27,.41)  edge [thick,blue,out=0, in=90]  (7.5,0);
\draw (6,-1.5) edge [thick,blue,out=180, in=-30] (2.27,-.16) edge [thick,blue,out=0, in=-90] (7.5,0);
\draw (6,.95) edge [thick,blue,dashed, out=180, in=10] (2.27,.41);
\draw [thick,blue,dashed, out=-10, in=180] (2.27,-.16) edge (6,-.95);
\draw (7,0) edge [thick,blue,dashed,out=90, in=0] (6,.95) edge [thick,blue,dashed,out=-90, in=0] (6,-.95);

\draw (9.73,.41) edge [thick,red,out=150, in=0] (6,1.2) edge  [thick,red,dashed, out=170, in=0] (6,.75);
\draw  (9.73,-.16) edge  [thick,red,out=-150, in=0] (6,-1.2) edge [thick,red,dashed, out=-170, in=0] (6,-.5);
\draw (4.5,0) edge  [thick,red,out=-90, in=180] (6,-1.2) edge  [thick,red,out=90, in=180] (6,1.2);
\draw (5,0) edge [thick,red,dashed,out=90, in=180] (6,.75) edge [thick,red,dashed,out=-90, in=180] (6,-.5);

\draw  (3.5,2) edge [Sepia,thick,out=-75,in=75] (3.5,-2) edge [Sepia,thick,dashed,out=-105,in=105](3.5,-2);
\draw  (8.5,2) edge [OliveGreen,thick,out=-75,in=75] (8.5,-2) edge [OliveGreen,thick,dashed,out=-105,in=105]  (8.5,-2);

\node [Sepia] at (3,-1.5) {\footnotesize $b$};
\node [blue] at (4.8, 1.7) {\footnotesize $d$};
\node [red] at (7.7,1.4) {\footnotesize $a$};
\node [OliveGreen] at (9,-1.5) {\footnotesize $c$};
\end{tikzpicture}
}
$\qquad\qquad$
\subfloat[(c) A chain of subsurfaces]
{
\begin{tikzpicture}[scale=.56,rotate=90]
\draw (1,-2) -- (11.5,-2);
\draw (1,2) edge [out=180,in=180] (1,-2) edge (11.5,2);

\draw (3,1) edge [out=0,in=90]  (3.5,.5) edge [out=180,in=90] (2.5,.5) ;
\draw (3,.3) edge [out=0,in=-135]  (3.6,.6) edge [out=180,in=-45] (2.4,.6);
\draw (6,1) edge [out=0,in=90] (6.5,.5) edge [out=180,in=90] (5.5,.5);
\draw (6,.3) edge [out=0,in=-135] (6.6,.6) edge [out=180,in=-45] (5.4,.6);
\draw (9,1) edge [out=0,in=90] (9.5,.5) edge [out=180,in=90] (8.5,.5);
\draw (9,.3) edge [out=0,in=-135] (9.6,.6) edge [out=180,in=-45] (8.4,.6);

\draw (1,2) edge [Sepia,thick,out=-80, in=180] (4.5,-.7);
\draw [dashed] (1,2) edge [Sepia,thick,out=-100, in=180] (4.5,-1.35);
\draw (8,2) edge [Sepia,thick,out=-100, in=0] (4.5,-.7);
\draw [dashed] (8,2) edge [Sepia,thick,out=-80, in=0] (4.5,-1.35);
\node [Sepia] at (2.3, 1.4) {\footnotesize $S_1$};

\draw (4,2) edge [blue,thick,out=-80, in=180] (7.5,-.7);
\draw [dashed] (4,2) edge [blue,thick,out=-100, in=180] (7.5,-1.35);
\draw (11,2) edge [blue,thick,out=-100, in=0] (7.5,-.7);
\draw [dashed] (11,2) edge [blue,thick,out=-80, in=0] (7.5,-1.35);
\node [blue] at (9.7, 1.4) {\footnotesize $S_2$};
\end{tikzpicture}
}
\caption{Corollary~\ref{cor:other groups}.}
\label{f:chain}
\end{figure}

We also record the following fact, which allows us to reduce to the case where $M$ is a connected manifold:

\begin{lem}\label{lem:connected}
Let $\phi\colon A(P_4)\to G\times H$ be an injective homomorphism, and let $\phi_G$ and $\phi_H$ be the composition of $\phi$ with the projections to $G$ and $H$ respectively. Then one of $\phi_G$ or $\phi_H$ is injective.
\end{lem}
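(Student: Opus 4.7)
The plan is to show that the kernels $K_G:=\ker\phi_G$ and $K_H:=\ker\phi_H$ cannot both be nontrivial. Both are normal subgroups of $A(P_4)$, and injectivity of $\phi$ immediately gives $K_G\cap K_H=\ker\phi=1$. Moreover, for any $x\in K_G$ and $y\in K_H$, the images $\phi(x)=(1,\phi_H(x))$ and $\phi(y)=(\phi_G(y),1)$ commute in $G\times H$, so $\phi([x,y])=1$, and injectivity yields $[x,y]=1$ in $A(P_4)$. Thus $K_G$ and $K_H$ commute elementwise in $A(P_4)$.

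Assuming for contradiction that both kernels are nontrivial, I would then invoke that $A(P_4)$ is acylindrically hyperbolic with trivial finite radical. Indeed, since $P_4$ is connected, has more than one vertex, and is not a nontrivial join, $A(P_4)$ admits a non-elementary acylindrical action on its extension graph (see Kim--Koberda or Behrstock--Hagen--Sisto), and since $A(P_4)$ is torsion-free its finite radical is trivial. By the theorem of Dahmani--Guirardel--Osin on normal subgroups of acylindrically hyperbolic groups, every nontrivial normal subgroup of $A(P_4)$ contains a pair of loxodromic elements with independent axes, and the centralizer in $A(P_4)$ of any such pair is contained in the finite radical and hence is trivial. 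Applied to $K_G$, this gives $K_H\subseteq C(K_G)=1$, the desired contradiction.

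The main obstacle is importing this nontrivial input from the theory of acylindrically hyperbolic groups in a clean and self-contained manner. A more elementary alternative would be to argue directly from Servatius's description of centralizers in right-angled Artin groups: for any nontrivial normal subgroup $N$ of $A(P_4)$ one would exhibit two elements of $N$ whose individual centralizers (which, by Servatius, are controlled by links in the defining graph $P_4$) intersect trivially, again forcing any normal subgroup that commutes elementwise with $N$ to be trivial.
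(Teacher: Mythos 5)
Your proof is correct and its overall strategy matches the paper's: show that $K_G$ and $K_H$ are normal subgroups with trivial intersection, hence commute elementwise, and then derive a contradiction from the existence of loxodromic elements in nontrivial normal subgroups of $A(P_4)$. The only substantive difference is the black box you invoke. You appeal to the Dahmani--Guirardel--Osin theory of acylindrically hyperbolic groups to produce two independent loxodromics in $K_G$ with trivial common centralizer, forcing $K_H\subseteq C(K_G)=1$. The paper instead cites RAAG-specific results (Kim--Koberda, Koberda--Mangahas--Taylor) to find a single loxodromic $g\in K_G$ whose centralizer $C(g)$ is cyclic, and then observes that for any nontrivial $h\in K_H$ one would have $\Z^2\cong\langle g,h\rangle\le C(g)$, which is absurd. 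Both routes rest on essentially the same underlying geometry; the paper's is marginally more economical since one loxodromic suffices, whereas yours packages the input in the general acylindrically hyperbolic framework (and needs two independent loxodromics plus the trivial-finite-radical hypothesis). Your Servatius-based alternative would also work, though you would still need to use normality of $K_G$ to locate two elements whose centralizers intersect trivially, so it is not obviously lighter than either route.
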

\begin{proof}
Let $K_G$ and $K_H$ be the kernels of $\phi_G$ and $\phi_H$ respectively, which we assume are nontrivial. On the one hand, note that $K_GK_H\cong K_G\times K_H$, so that if $g\in K_G$ and $h\in K_H$ are nontrivial, then $\Z^2\cong\langle g,h\rangle<A(P_4)$. On the other hand, we have that $K_G$ and $K_H$ both contain loxodromic elements, each of which has a cyclic centralizer in $A(P_4)$ (see Lemma 52 of \cite{KK2013b} and Theorem 2.4 of \cite{KMT2014}). This is a contradiction, so that at least one of $K_G$ and $K_H$ is trivial.
\end{proof}

\section{Classical one--dimensional dynamics}\label{s:onedim}
From Sections~\ref{s:onedim} through~\ref{s:config-interval},
we will assume $M$ is connected. In other words, we let
  $M=I=[0,1]$ or $M=S^1=\bR/\bZ$.
For $x,y\in S^1$, the open interval $(x,y)=\{z\in M\co x<z<y\}$
is naturally defined as a subset of the image of $(x,x+1)\sse\bR$.
We let $\Homeo_+(M)$ denote the group of orientation--preserving homeomorphisms of $M$.
For $f\in\Homeo_+(M)$,
we use the notations
$
\Fix f=\{x\in M\co fx=x\}$ and 
$\supp f=M\setminus\Fix f$.
The set $\supp f$
is called
the \emph{support} (or, \emph{open support}) 
of $f$. We will broadly appeal to the following equality, which is trivial to prove: $\supp f=\supp f^{-1}$ for an arbitrary automorphism of an arbitrary set.

We define the set of \emph{periodic points}:

\[\Per f=\{x\in M\co f^px=x\text{ for some }0\ne p\in\bZ\}
=\bigcup_{p=1}^\infty\Fix(f^p).\]
For $Y\sse M$, the set of the connected components of $Y$ is denoted as $\pi_0(Y)$.
For two group elements $a$ and $b$, we let
 $[a,b]=a^{-1}b^{-1}ab$.
\begin{lem}\label{l:commute}
For $f, g\in\Homeo_+(M)$, we have the following.
\be
\item
$\Fix (gfg^{-1})=g\Fix f$, $\supp (gfg^{-1})=g\supp f$
and
$\Per (gfg^{-1})=g\Per f$.
\item\label{l:gx}
Let $Y\sse Z$ be intervals in $M$
such that each component of $\supp f$ is either contained in $Y$ or disjoint from $Z$.
If $g(Y)\sse Y$ and $g(Z)= Z$, then  $gf^{\pm1}g^{-1}(Y)= Y$.
\item
If $f$ and $g$ commute,
then
  $f$ permutes the elements of $\pi_0(\supp g)$.
\ee
\end{lem}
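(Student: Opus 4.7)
The plan is to handle the three parts in order: part (1) is a direct definition-chase, part (3) follows immediately from (1), and the only mildly substantive work is in part (2), which reduces via (1) to a pointwise invariance argument.

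For part (1), I would observe that $x\in\Fix(gfg^{-1})$ if and only if $f(g^{-1}x)=g^{-1}x$, i.e.\ $g^{-1}x\in\Fix f$, giving $\Fix(gfg^{-1})=g\Fix f$. The support identity then follows by taking complements in $M$, since $g$ is a bijection of $M$. The periodic point identity uses $(gfg^{-1})^p=gf^pg^{-1}$, so
$\Per(gfg^{-1})=\bigcup_{p\ge1}\Fix(gf^pg^{-1})=\bigcup_{p\ge1}g\Fix(f^p)=g\Per f$.

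For part (2), set $h=gf^{\pm1}g^{-1}$. Since $\supp f=\supp f^{-1}$, the component hypothesis applies equally to both signs, and by part (1) we have $\supp h=g\supp f$. Transporting the hypothesis through $g$, each component $D$ of $\supp h$ has the form $g(C)$ for some component $C$ of $\supp f$, and is therefore either contained in $g(Y)\sse Y$, or else satisfies $D\cap Z=g(C)\cap g(Z)=\varnothing$; in the latter case $D\cap Y=\varnothing$ as well, because $Y\sse Z$. Now take $x\in Y$: if $x\notin\supp h$ then $h(x)=x\in Y$; otherwise $x$ lies in some component $D$ of $\supp h$, and $D\cap Y\neq\varnothing$ forces $D\sse Y$. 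The standard fact that an orientation-preserving homeomorphism of a compact one-manifold preserves each component of its support setwise (the endpoints of $D$ are fixed, and orientation-preservation rules out a swap in the circle case) gives $h(x)\in h(D)=D\sse Y$. This proves $h(Y)\sse Y$. Applying the identical argument to $h^{-1}=gf^{\mp1}g^{-1}$, whose support satisfies the same hypotheses, yields $h^{-1}(Y)\sse Y$, equivalently $Y\sse h(Y)$; hence $h(Y)=Y$.

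For part (3), commutativity gives $fgf^{-1}=g$, so part (1) yields $f\supp g=\supp(fgf^{-1})=\supp g$. The homeomorphism $f$ therefore restricts to a self-bijection of $\supp g$ that carries connected components to connected components, so $f$ permutes $\pi_0(\supp g)$.

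The main ingredient beyond pure definition-chasing is the setwise invariance of support-components under an orientation-preserving homeomorphism, used in part (2); everything else is formal, and the symmetry $\supp f=\supp f^{-1}$ is what lets the same argument cover both signs in $gf^{\pm1}g^{-1}$.
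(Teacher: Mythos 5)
Your proof is correct and follows essentially the same route as the paper's. The only notable difference is in part (2), where the paper dispatches the claim in a single line (it records the inclusion $\supp(gf^{\pm1}g^{-1})\cap Z = g\supp f\cap Z\subseteq g(Y)\subseteq Y$ and declares the conclusion immediate), whereas you spell out the two implicit steps: that an orientation-preserving homeomorphism fixes each component of its support setwise, and that the inclusion $h(Y)\subseteq Y$ must be upgraded to equality by running the same argument on $h^{-1}$ using $\supp f^{-1}=\supp f$. Both of these points are genuinely needed and are left tacit in the paper, so your version is a faithful but more explicit rendering rather than a different argument.
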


\bp
(1) For $x\in M$, we have
\[
x\in\Fix (gfg^{-1})
\Leftrightarrow
gfg^{-1}x= x
\Leftrightarrow f(g^{-1}x)= g^{-1}x
\Leftrightarrow g^{-1}x\in \Fix f.\]
And the other two assertions follow similarly.

(2) The assertion is immediate from 
\[\supp(gf^{\pm1}g^{-1})\cap Z=g\supp f\cap Z\sse g(Y)\sse Y.\]


(3)
Let $Y\in\pi_0(\supp g)$. Since $fY$ is connected and contained in $f\supp g=\supp g$,
there exists $Z\in \pi_0(\supp g)$ containing $fY$.
Since $f^{-1}Z$ is connected and contained in $f^{-1}(\supp g)=\supp g$,
we have $Y=f^{-1}Z$ and $fY=Z$.
\ep

Let us say $f$ is \emph{grounded} if $\Fix f\ne\varnothing$.
For example, every map in $\Homeo_+(I)$ is grounded.
We say a group action of $G$ on $M$ is \emph{free}
(or, $G$ acts \emph{freely} on $M$)
if $\Fix g=\varnothing$ for each $g\in G\setminus\{1\}$.

For $f\in \Homeo_+(S^1)$, choose an arbitrary lift $\tilde f\co\bR\to \bR$
and $x\in \bR$.
Then we define the \emph{rotation number} of $f$ as 
$\operatorname{rot} f=
\lim_{n\to\infty} \left(\tilde f^n(x) - x\right)/{n}$,
which is uniquely defined in $\bR/\bZ$ regardless of the choice of the lift $\tilde f$ and $x$.
We have $\operatorname{rot} (f^n)=n\operatorname{rot} (f)$ for each $n\in\bZ$.
It is a standard fact that $\operatorname{rot}(f)=0$ if and only if $f$ is grounded.

\begin{thm}[H\"older's Theorem~\cite{Navas2011}]\label{t:hoelder}
If $G$ is a subgroup of $\Homeo_+(M)$ such that
 $G$ acts freely on $M\setminus\partial M$,
then $G$ is abelian. In this case, if $M=S^1$ then 
the rotation number is an embedding from $G$ to the multiplicative group $S^1$.
\end{thm}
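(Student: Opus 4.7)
The plan is to handle the interval case first via a natural bi-invariant order on $G$, and then to reduce the circle case to the real line by passing to the universal cover.

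For $M=I$: freeness on $(0,1)$ implies that for any nontrivial $g\in G$ the sign of $g(x)-x$ is constant in $x\in(0,1)$. Fixing $x_0\in(0,1)$, I would define $g\prec h$ by $g(x_0)<h(x_0)$; applied to $h^{-1}g$, the previous observation shows that this is equivalent to $g(x)<h(x)$ for every $x$, so the order is independent of $x_0$ and bi-invariant under multiplication. The key analytic step is Archimedeanness: given $1\prec g$ and an arbitrary $h\in G$ there exists $n\in\bN$ with $h\prec g^n$. If this failed, the monotone sequence $(g^n x_0)_{n\ge 1}$ would stay below $h(x_0)$ and converge to some $y\in I$; continuity would give $g(y)=y$, contradicting freeness. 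With Archimedeanness established I would invoke the classical theorem that an Archimedean bi-ordered group embeds order-preservingly into $(\bR,+)$ via the map sending $g$ to $\lim_{n\to\infty} p(n)/n$, where $g_0\succ 1$ is a fixed reference element and $p(n)\in\bZ$ is chosen so that $g_0^{p(n)}\preceq g^n\prec g_0^{p(n)+1}$. In particular $G$ is abelian.

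For $M=S^1$: let $\tilde G\le\Homeo_+(\bR)$ be the full preimage of $G$ under the covering projection, which sits in a central extension $1\to\bZ\to\tilde G\to G\to 1$, with $\bZ$ given by the integer translations $\tau_n$. Because no nontrivial $g\in G$ fixes a point of $S^1$, every lift $\tilde g$ of a nontrivial $g$ satisfies $\tilde g(x)-x\notin\bZ$ for all $x\in\bR$, so every nontrivial element of $\tilde G$ acts freely on $\bR$. Rerunning the interval argument on $\bR$ (the Archimedean step still works, since bounded monotone sequences in $\bR$ converge) yields an order-preserving embedding $\phi\co\tilde G\hookrightarrow(\bR,+)$. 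After rescaling so that $\phi(\tau_1)=1$, this descends to an injective homomorphism $G=\tilde G/\bZ\hookrightarrow\bR/\bZ=S^1$. A direct comparison with the definition of rotation number, together with the additivity of $\operatorname{rot}$ on commuting pairs, identifies this homomorphism with $\operatorname{rot}$.

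The main obstacle is the Archimedean step, where freeness and continuity must be combined to rule out orbits accumulating at an interior point of $M$. A secondary subtlety in the circle case is the normalization ensuring that the central copy of $\bZ\subset\tilde G$ maps exactly to $\bZ\subset\bR$; this is possible because $\phi(\tilde G)$ is a subgroup of $\bR$ in which $\phi(\tau_1)$ is nontrivial, so dividing $\phi$ by $\phi(\tau_1)$ produces the required normalization.
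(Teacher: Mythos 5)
Your proof is correct and follows the classical argument for H\"older's theorem (the same as in the cited reference of Navas): freeness gives a total bi-invariant order on $G$ that is Archimedean because an accumulating orbit would produce an interior fixed point; the ordered-group form of H\"older's theorem then embeds $G$ order-preservingly into $(\bR,+)$; and the circle case reduces to the line by passing to the universal cover and noting the embedding sends the central $\bZ$ of deck transformations onto $\bZ\subset\bR$, descending to an embedding $G\hookrightarrow\bR/\bZ$ which one identifies with the rotation number. Since the paper cites this theorem rather than proving it, there is nothing further to compare.
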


\begin{thm}[{\cite[Proposition 2.10]{FF2001}}]\label{t:irr-rot}
The centralizer of an irrational rotation in $\Homeo_+(S^1)$ is $\operatorname{SO}(2,\bR)$.
\end{thm}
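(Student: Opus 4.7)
The plan is to prove the two inclusions separately, with the forward inclusion being the only substantive direction. The easy direction is that every rotation commutes with $R_\alpha$, which is immediate from commutativity of addition on $S^1=\bR/\bZ$. So the task reduces to showing that if $f\in\Homeo_+(S^1)$ commutes with an irrational rotation $R_\alpha$, then $f$ is itself a rotation.

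My strategy rests on the classical fact that every orbit of $R_\alpha$ is dense in $S^1$: since $\alpha$ is irrational, the set $\{n\alpha\bmod 1\co n\in\bZ\}$ is dense, and translating by any base point $x_0$ preserves density. I would first fix a base point, say $x_0=0$, and set $p=f(0)$. Then I would write down the commutation identity
\[
f\bigl(R_\alpha^n(0)\bigr)=R_\alpha^n\bigl(f(0)\bigr)=R_\alpha^n(p)=R_p\bigl(R_\alpha^n(0)\bigr)
\]
for every $n\in\bZ$, where the last equality uses that $R_\alpha^n$ and $R_p$ are both translations on $\bR/\bZ$ and hence commute. This shows that $f$ and the rotation $R_p$ agree on the full orbit $\{R_\alpha^n(0)\co n\in\bZ\}$.

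The final step is a density/continuity argument: since $f$ and $R_p$ are both continuous maps $S^1\to S^1$ which agree on the dense subset $\{R_\alpha^n(0)\co n\in\bZ\}$, they coincide everywhere. Therefore $f=R_p\in\operatorname{SO}(2,\bR)$, completing the nontrivial inclusion.

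I expect the main conceptual point — and the only place any content really enters — to be the invocation of irrationality to guarantee density of a single orbit. Everything else is formal manipulation with the commutation relation and the uniqueness-of-continuous-extension principle. Note that the argument is essentially the same one that shows every homeomorphism with irrational rotation number which admits a dense orbit is conjugate to a rotation; here minimality of $R_\alpha$ is already in hand, so no appeal to Denjoy's theorem is needed.
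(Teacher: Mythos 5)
Your proof is correct and complete. Note that the paper does not prove this statement itself but cites it to Farb--Franks; the argument you give --- density of the orbit $\{n\alpha \bmod 1\}$ by irrationality, the commutation identity $f(n\alpha)=n\alpha+f(0)$, and uniqueness of continuous extensions from a dense subset --- is exactly the standard proof of that cited proposition, so there is nothing to add or correct.
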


We denote by $\var(g;M)$ the total variation of a map $g\co M\to\bR$:
\[\var(g;M) = \sup\left\{\sum_{i=0}^{n-1}\left| g(a_{i+1})- g(a_i)\right|
\co (a_i\co 0\le i\le n)\text{ is a partition of }M\right\}.\]
In the case $M=S^1$, we require $a_n=a_0$ in the above definition.
Following \cite{Navas2011},
we say a $C^1$ diffeomorphism $f$ on $M$ is $\Cbv$ if $\var(f';M)<\infty$.
We let $\Diffb(M)$ denote
the group of orientation--preserving $\Cbv$ diffeomorphisms of $M$.
Our approach essentially builds on the following two fundamental results on $\Cbv$ diffeomorphisms.

\begin{thm}[Denjoy's Theorem {\cite{Denjoy1958}, \cite[Theorem 3.1.1]{Navas2011}}]\label{t:denjoy}
If $f\in\Diffb(S^1)$ and $\Per f=\emptyset$, then $f$ is topologically conjugate to an irrational rotation.
\end{thm}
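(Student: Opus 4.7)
The plan is to construct a continuous, monotone, degree-one semiconjugacy $h\co S^1\to S^1$ satisfying $h\circ f = R_\rho\circ h$, where $\rho = \operatorname{rot}(f)$ and $R_\rho$ denotes rotation by $\rho$, and then to upgrade $h$ to a homeomorphism. Since $\Per f = \varnothing$, no power $f^q$ is grounded, so $\rho$ must be irrational by the characterization of rotation number recalled just before the theorem. To construct $h$: a classical combinatorial fact of Poincar\'e says that the cyclic order of the orbit $\{f^n(x_0)\}_{n\in\bZ}$ on $S^1$ agrees with that of $\{R_\rho^n(y_0)\}_{n\in\bZ}$. Defining $h(f^n(x_0)) := R_\rho^n(y_0)$ on one orbit and extending first by monotonicity to the orbit closure and then by right-continuity to all of $S^1$ yields a continuous, monotone, surjective map with the desired equivariance.

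The semiconjugacy $h$ fails to be a homeomorphism precisely when it collapses a nondegenerate interval --- equivalently, when $f$ admits a \emph{wandering interval}: a nondegenerate open interval $J\sse S^1$ whose iterates $\{f^n(J)\}_{n\in\bZ}$ are pairwise disjoint. Thus the whole theorem reduces to showing that a $\Cbv$ circle diffeomorphism with $\Per f=\varnothing$ has no wandering interval, and this is the unique place where the $\Cbv$ hypothesis is used.

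For this last step, suppose toward contradiction that $J$ is a wandering interval. Pairwise disjointness of the iterates forces $\sum_{n\in\bZ}|f^n(J)|\le 1$, so $|f^n(J)|\to 0$ as $|n|\to\infty$. Since $f$ is a $C^1$ diffeomorphism of a compact manifold, $f'$ is bounded above and away from $0$, so $\log f'$ inherits bounded total variation from $f'$; write $V:=\var(\log f';S^1)$. The chain rule
\[
\log(f^n)'(x) - \log(f^n)'(y) = \sum_{k=0}^{n-1}\bigl[\log f'(f^k x) - \log f'(f^k y)\bigr],
\]
combined with the pairwise disjointness of the intervals $f^k(J)$ for $0\le k<n$, telescopes to the \emph{Denjoy distortion estimate}
\[
\bigl|\log(f^n)'(x) - \log(f^n)'(y)\bigr| \le V \qquad (x,y\in J),
\]
uniformly in $n$, with the analogous bound holding for $f^{-n}$. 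Combining this bounded-distortion control with the mean value theorem applied to $f^{\pm n}$ on $J$ (which forces the derivatives of $f^{\pm n}$ on $J$ to be comparable to the average slopes $|f^{\pm n}(J)|/|J|$ up to the multiplicative constant $e^V$), and invoking the cocycle identity $(f^n)'(x)\cdot(f^{-n})'(f^n x)=1$, produces a contradiction with the fact that $|f^{\pm n}(J)|\to 0$.

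The main obstacle --- and the heart of the whole proof --- is the Denjoy distortion estimate itself: one must verify that summing the variations of $\log f'$ over the pairwise disjoint iterates $f^k(J)$ is really bounded by the total variation on $S^1$, a step that fails for merely $C^1$ diffeomorphisms (indeed Denjoy constructed $C^1$ counterexamples exhibiting wandering intervals), and which is precisely what the $\Cbv$ hypothesis purchases.
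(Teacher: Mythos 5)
This is a cited classical result; the paper offers no proof of its own, deferring to Denjoy's paper and Navas's book, so the comparison is with the standard argument in those references. Your first three steps are exactly right and correctly isolated: Poincar\'e's semiconjugacy construction, the reduction to showing there is no wandering interval, and the bounded-distortion estimate $\bigl|\log(f^n)'(x)-\log(f^n)'(y)\bigr|\le V$ for $x,y\in J$, valid because the intervals $f^k(J)$, $0\le k<n$, are pairwise disjoint. You are also right that this estimate is the one place where the $\Cbv$ hypothesis is irreplaceable.

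The gap is in the final step. Bounded distortion on $J$ together with the mean value theorem gives $(f^n)'|_J\asymp |f^n(J)|/|J|\to 0$ and likewise $(f^{-n})'|_J\asymp |f^{-n}(J)|/|J|\to 0$. But the cocycle identity $(f^n)'(x)\cdot(f^{-n})'(f^n x)=1$ evaluates $(f^{-n})'$ at $f^n(x)\in f^n(J)$, \emph{not} at a point of $J$; and the same bounded-distortion estimate applied to $f^{-n}$ on the interval $f^n(J)$ yields $(f^{-n})'|_{f^n(J)}\asymp |J|/|f^n(J)|\to\infty$, which is perfectly consistent with the identity. So the three ingredients you list do not, by themselves, contradict anything. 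The classical proof closes this gap by bringing in the combinatorics of irrational rotation: one works with the closest-return times $q_k$ (continued-fraction denominators of $\rho$) and uses the fact that, for these times, the iterates $L, f(L), \dots, f^{q_k-1}(L)$ of a suitable interval $L$ containing $J$ cover the circle with multiplicity at most two. This upgrades the distortion bound to $e^{-2V}\le (f^{q_k})'\le e^{2V}$ uniformly on $L\supseteq J$, and only then does $|f^{q_k}(J)|=\int_J (f^{q_k})'\ge e^{-2V}|J|$ contradict $|f^{q_k}(J)|\to 0$. You need that combinatorial input (or, equivalently, a Denjoy--Koksma type inequality); the cocycle identity alone will not do.
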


\begin{thm}[Kopell's Lemma {\cite{Kopell1970}, \cite[Theorem 4.1.1]{Navas2011}}]\label{t:kopell}
Suppose $f,g\in\Diffb(I)$ and $[f,g]=1$.
If $\Fix f\cap (0,1)=\varnothing$ and $g\ne 1$,
then $\Fix g\cap (0,1)=\varnothing$.
\end{thm}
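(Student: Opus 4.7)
I would argue by contradiction, assuming $g \ne \id$ has a fixed point in $(0,1)$. After possibly replacing $f$ by $f^{-1}$, I may assume $f(x) < x$ on $(0,1)$, so all forward $f$-orbits converge to $0$. Since $\supp g$ is a nonempty open proper subset of $(0,1)$, it has a connected component $(a,b)$ with $a \in (0,1)$ (the case where only $b \in (0,1)$ is handled symmetrically), on which, after possibly replacing $g$ by $g^{-1}$, I may assume $g > \id$. Commutativity forces $f^n(a), f^n(b) \in \Fix g$, and the fact that $(a,b)$ is a component of $\supp g$ forces $f(b) \le a$: otherwise $f(b) \in (a,b)$ would be a fixed point of $g$ inside a component of its own support. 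Hence the intervals $J_n := (f^n(a), f^n(b))$ for $n \ge 0$ are pairwise disjoint in $(0,b)$, and $|J_n| \to 0$.

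The key analytic input is the classical bounded distortion estimate for $\Cbv$ diffeomorphisms. Setting $V := \var(\log f'; I) < \infty$, the chain rule together with the disjointness of the $J_k$ gives
\[ \left| \log \frac{(f^n)'(x)}{(f^n)'(y)} \right| \le \sum_{k=0}^{n-1} \left| \log f'(f^k(x)) - \log f'(f^k(y)) \right| \le V \]
for all $x, y \in [a, b]$. Combined with the mean value theorem, this yields the two-sided bound $e^{-V} |J_n|/|J_0| \le (f^n)'(x) \le e^V |J_n|/|J_0|$ uniformly on $[a, b]$.

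To close the contradiction, fix $x_0 \in (a,b)$ and set $\delta := g(x_0) - x_0 > 0$ and $y_n := f^n(x_0) \in J_n$. On the one hand, $g$ fixes both endpoints of $J_n$, so Rolle's theorem applied to $g - \id$ yields $\eta_n \in J_n$ with $g'(\eta_n) = 1$; since $\eta_n \to 0$, continuity of $g'$ forces $g'(0) = 1$. Then the mean value theorem on $g$ between $f^n(a)$ and $y_n$, using $g(f^n(a)) = f^n(a)$, gives
\[ |g(y_n) - y_n| = |g'(\xi_n) - 1|\cdot(y_n - f^n(a)) \le \epsilon_n |J_n|, \]
where $\xi_n \in J_n$ and $\epsilon_n := |g'(\xi_n) - 1| \to 0$. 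On the other hand, commutativity gives $g(y_n) = f^n(g(x_0))$, and the mean value theorem together with the distortion estimate yields
\[ |g(y_n) - y_n| = (f^n)'(\zeta_n)\cdot\delta \ge e^{-V}\delta\,|J_n|/|J_0|. \]
Dividing both estimates by $|J_n|$ forces $\epsilon_n \ge e^{-V}\delta/|J_0|$, a positive constant, contradicting $\epsilon_n \to 0$.

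The main obstacle is the distortion estimate and its precise deployment in the closing squeeze: the disjointness of the $J_k$ is what converts the finite total variation of $\log f'$ into a uniform multiplicative control on $(f^n)'$, and then both the upper bound (coming from $g'(0)=1$) and the lower bound (coming from distortion of $f^n$) must be calibrated to the same order $|J_n|$ to cancel. This is precisely where the $\Cbv$ hypothesis is indispensable and cannot be relaxed to $C^1$, in line with Theorem~\ref{thm:rnilp}.
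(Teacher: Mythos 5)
The paper does not reprove Kopell's Lemma; it is cited as a classical black box from Kopell's thesis and Navas's monograph. Your argument is correct and is essentially the standard proof found in those references: the disjointness of the iterates $J_n=(f^n(a),f^n(b))$ turns the finite total variation of $\log f'$ (which is indeed finite because $f'$ is continuous and bounded below on $[0,1]$, so $\log$ is Lipschitz on its range) into the bounded-distortion bound $e^{-V}\le (f^n)'(x)/(f^n)'(y)\le e^V$, Rolle on $g-\id$ over the $J_n$ forces $g'(0)=1$, and the two mean-value estimates on $g(y_n)-y_n$ squeeze to the contradiction $\epsilon_n\ge e^{-V}\delta/|J_0|>0$ with $\epsilon_n\to 0$.
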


Farb and Franks proved the following \emph{Abelian Criterion} in {\cite[Lemma 3.2]{FF2001}} for $C^2$ diffeomorphisms. Actually, they stated the theorem in a stronger form for \emph{fully supported} diffeomorphisms, which means that the closure of the support of each diffeomorphism is $I$. Our proof given here follows \cite{FF2001} closely.

\begin{thm}[Abelian Criterion]\label{t:abel}
If $f,g,h\in\Diffb(I)$ satisfy that $\Fix g =\{0,1\}$ and that
$[f,g]=1=[g,h]$, then $[f,h]=1$.
\end{thm}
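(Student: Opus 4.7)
The plan is to reduce the statement to a combination of Kopell's Lemma and H\"older's Theorem, following Farb--Franks. The starting observation is that $f$ and $h$ must preserve $\Fix g = \{0,1\}$ (by Lemma~\ref{l:commute}(1)), and since both $f$ and $h$ are orientation-preserving diffeomorphisms of $I$, this forces them to fix $0$ and $1$. We may also assume both $f$ and $h$ are nontrivial, since otherwise $[f,h]=1$ is immediate.

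First I would apply Kopell's Lemma (Theorem~\ref{t:kopell}). Since $\Fix g \cap (0,1) = \varnothing$ and $[g,f] = 1$, Kopell's Lemma (with $g$ playing the role of ``$f$'' in its statement and $f$ playing the role of ``$g$'') implies $\Fix f \cap (0,1) = \varnothing$. The identical argument applied to $h$ in place of $f$ gives $\Fix h \cap (0,1) = \varnothing$.

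Next I would upgrade this to the statement that the entire group $H = \langle f, h\rangle$ acts freely on $(0,1)$. The key point is that every element $k \in H$ is a word in $f$ and $h$, both of which commute with $g$, so $[k,g] = 1$ for every $k \in H$. Consequently, if $k \in H$ is nontrivial, then applying Kopell's Lemma once more (with $g$ in the role of ``$f$'' and $k$ in the role of ``$g$'') yields $\Fix k \cap (0,1) = \varnothing$. Thus every nontrivial element of $H$ acts without fixed points on $(0,1) = I \setminus \partial I$.

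Finally, I would invoke H\"older's Theorem (Theorem~\ref{t:hoelder}) to conclude that $H$ is abelian, and in particular $[f,h] = 1$. The only subtle point in this proof is making sure Kopell's Lemma is applicable to every nontrivial word in $f$ and $h$ (not just to $f$ and $h$ themselves), which is precisely where the hypothesis $[f,g]=1=[g,h]$ is used in full strength. There is no real obstacle beyond remembering that commutation with $g$ passes to the group generated by $f$ and $h$, so that Kopell applies uniformly.
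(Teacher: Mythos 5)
Your proof is correct and takes essentially the same route as the paper: both reduce the claim to showing that $\langle f,h\rangle$ acts freely on $(0,1)$ (via Kopell's Lemma applied to an arbitrary nontrivial word $k$ that commutes with $g$) and then invoke H\"older's Theorem. The paper phrases the Kopell step as a contrapositive and skips your preliminary step of checking $\Fix f \cap (0,1)=\varnothing$ and $\Fix h\cap(0,1)=\varnothing$ separately (which is subsumed by the general word case), but the argument is the same.
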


\bp
Suppose $w\in\form{f,h}$ fixes a point in $(0,1)$. 
Since $[w,g]=1$, Kopell's Lemma implies that
$w=1$. Hence $\form{f,h}$ acts freely on $(0,1)$
and is abelian by H\"older's theorem.
\ep

The \emph{commutation graph} of a subset $S$ of a group $G$
is a graph on the vertex set $S$ such that
 two vertices $u$ and $v$ are joined if and only if they are commuting in $G$. A \emph{complete graph} is a finite graph in which every pair of vertices are joined.

\begin{lem}\label{l:fix2}
Suppose a nonabelian subgroup $G\le\Diffb(S^1)$ is generated by a finite set $V\sse G$ such that the commutation graph of $V$ is connected.
If $V$ consists of infinite order elements, then the rotation number of each $v\in V$ is rational.
\end{lem}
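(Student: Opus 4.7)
\emph{Proof plan.} My plan is to argue by contradiction: assuming some generator has irrational rotation number, I will show that the connectedness of the commutation graph forces every generator into a common conjugate of $\operatorname{SO}(2,\bR)$, hence that $G$ is abelian.

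So suppose, for contradiction, some $v_0\in V$ has irrational rotation number. Since $v_0$ has infinite order, no power of $v_0$ can fix a point: a periodic point of $v_0$ would give some $v_0^p$ with rotation number $0$, forcing $p\cdot\operatorname{rot}(v_0)\in\bZ$ and hence $\operatorname{rot}(v_0)\in\bQ$. Thus $\Per v_0=\varnothing$, and Denjoy's theorem (Theorem~\ref{t:denjoy}) produces $\psi\in\Homeo_+(S^1)$ with $\psi v_0\psi^{-1}$ an irrational rotation. Combined with Theorem~\ref{t:irr-rot}, the centralizer of $v_0$ in $\Homeo_+(S^1)$ is exactly $\psi^{-1}\operatorname{SO}(2,\bR)\psi$.

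The key step is to propagate this conclusion through the commutation graph. Given any $v\in V$, connectedness supplies a path $v_0,v_1,\ldots,v_n=v$ of generators with $[v_{i-1},v_i]=1$ for every $i$. I would prove by induction on $i$ that $\psi v_i\psi^{-1}\in\operatorname{SO}(2,\bR)$. In the inductive step, $\psi v_{i-1}\psi^{-1}$ is a rotation by the inductive hypothesis, and it has infinite order because $v_{i-1}$ does, so it is an \emph{irrational} rotation; since $\psi v_i\psi^{-1}$ commutes with it, another application of Theorem~\ref{t:irr-rot} forces $\psi v_i\psi^{-1}\in\operatorname{SO}(2,\bR)$.

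Consequently every generator lies in the abelian group $\psi^{-1}\operatorname{SO}(2,\bR)\psi$, so $G$ itself is abelian, contradicting the standing hypothesis and completing the proof. The main delicate point is making sure the induction carries through: at each step one must know the rotation just produced is \emph{irrational}, not merely a rotation, in order to reapply Theorem~\ref{t:irr-rot}. This is precisely the place where the hypothesis that every element of $V$ has infinite order gets used repeatedly, and it is what rules out the otherwise plausible scenario in which the propagation lands on a rational rotation and stalls.
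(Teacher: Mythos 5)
Your proof is correct and follows essentially the same route as the paper's: Denjoy's theorem to conjugate $v_0$ to an irrational rotation, Theorem~\ref{t:irr-rot} to pin down its centralizer as a conjugate of $\operatorname{SO}(2,\bR)$, and then propagation along paths in the commutation graph, using the infinite-order hypothesis at each step to keep the rotations irrational, to conclude that $G$ is abelian. One small slip in phrasing: you write that $\Per v_0=\varnothing$ ``since $v_0$ has infinite order,'' but infinite order alone does not preclude periodic points; what rules them out (as your own parenthetical explanation then correctly shows) is the assumed irrationality of $\operatorname{rot}(v_0)$.
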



\bp
Suppose $v_1\in V$ has an irrational rotation number,
and choose a path \[(v_1,\ldots,v_m)\] in the commutation graph of $V$
such that all the vertices are visited at least once.
By Denjoy's Theorem, there exists $f\in\Homeo_+(S^1)$ such that
 $fv_1f^{-1}$ is an irrational rotation. 
From Theorem~\ref{t:irr-rot} and the assumption that $v_2$ is of infinite order,
we see that $fv_2f^{-1}$ is also an irrational rotation.
An inductive argument shows that $fvf^{-1}$ is an irrational rotation 
for each $v\in V$. This would imply that $G$ is abelian.\ep

\begin{rem}\label{r:fix2} In Lemma~4.7, one cannot drop the condition that $V$ consists of infinite order elements. For instance, consider the three maps $a,b,c\in\Diff^\infty_+(S^1)$: \[a(x)=x+\frac\pi3, \quad b(x) = x+\frac12, \quad c(x) = x+ \frac{\sin 4\pi x}{8\pi}\mod \bZ.\] It is easy to see that $[a,b]=[b,c]=1$, $[a,c]\ne1$ and that the rotation number of $a$ is irrational.\end{rem}

We write $A\pitchfork B$ if two sets $A$ and $B$ intersect nontrivially.


\begin{lem}[Disjointness Condition]\label{l:disjoint}
Components of the supports of two commuting grounded $\Cbv$ diffeomorphisms on $M$
 are either pairwise equal or pairwise disjoint.
\end{lem}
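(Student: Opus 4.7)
The plan is to argue by contradiction, reducing matters to a direct application of Kopell's Lemma (Theorem~\ref{t:kopell}). Let $f,g\in\Diffb(M)$ be commuting and grounded, let $A\in\pi_0(\supp f)$ and $B\in\pi_0(\supp g)$, and suppose toward contradiction that $A\cap B\neq\varnothing$ but $A\neq B$. Since $f$ and $g$ are grounded, neither $A$ nor $B$ can equal $M$, so both are proper connected open subsets of $M$ with nonempty boundary in $M$. After possibly interchanging the roles of $f$ and $g$, I may assume $B\not\sse A$; then, because $B$ is connected and meets both $A$ and $M\setminus A$, it must intersect $\partial A$, yielding a point $p\in B\cap\partial A$.

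The next step is to extract the two consequences of having such a $p$. Because $A$ is a component of the open set $\supp f$, its boundary points in $M$ are fixed by $f$, so $f(p)=p$. By Lemma~\ref{l:commute}(3), $f$ permutes the elements of $\pi_0(\supp g)$; since $f(B)$ is the component of $\supp g$ containing $f(p)=p\in B$, we obtain $f(B)=B$ and hence $f(\bar B)=\bar B$. Thus $f|_{\bar B}$ and $g|_{\bar B}$ are a pair of commuting orientation-preserving $\Cbv$ diffeomorphisms of the compact interval $\bar B$, which I identify with $[0,1]$ so that the endpoints of $B$ correspond to $0$ and $1$. Under this identification $p$ lies in $(0,1)$ since $p\in B$ is an open arc.

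Finally I apply Kopell's Lemma to this setup. By definition of $B$, the diffeomorphism $g|_{\bar B}$ is nontrivial with no fixed point in $(0,1)$. Theorem~\ref{t:kopell} then forces $f|_{\bar B}$ either to equal the identity or to have no fixed point in $(0,1)$. The second alternative is excluded because $p\in\Fix f\cap(0,1)$, so $f|_{\bar B}=\id$. But this contradicts $p\in\partial A$: points of $A\sse\supp f$ accumulate at $p$, and since $p\in B$ and $B$ is open, such points also lie in $\bar B$, so $f$ cannot restrict to the identity on $\bar B$.

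The whole argument hinges on the single observation that a point $p\in B\cap\partial A$ must exist and must simultaneously be fixed by $f$ and lie in the interior of $\bar B$; this is what lets us promote the local intersection $A\cap B$ into a genuine Kopell obstruction. Once such a $p$ is produced, the fact that $f(B)=B$ and the invocation of Theorem~\ref{t:kopell} are essentially automatic, so the mildly delicate part of the proof is really just the opening topological step of extracting $p$.
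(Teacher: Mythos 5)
Your proof is correct, and it takes a genuinely different route from the paper's. The paper splits into two cases according to the relative position of $A=(p,q)\in\pi_0(\supp f)$ and $B=(x,y)\in\pi_0(\supp g)$: in the ``partial overlap'' case (including the linked case on $S^1$) it argues directly that an endpoint of $B$ lying in $A$ gets pushed by $f$ into $\supp g$, contradicting commutativity at a single point; in the nesting case $B\sse A$ it uses the iterates $f^k$ to show that $g$ fixes both endpoints of $A$, and then applies Kopell's Lemma on $\overline{A}$. Your argument collapses the case analysis: after arranging $B\not\sse A$, you find a boundary point $p$ of $A$ inside $B$, deduce $f(B)=B$ from Lemma~\ref{l:commute}(3) rather than from a limit computation, and then apply Kopell uniformly to $\overline{B}$. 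This is tidier (one case, no iteration), at the mild cost of invoking Kopell even where the paper's overlap argument is purely elementary. One small point worth flagging: when $M=S^1$ and $\Fix g$ is a single point $w$, one has $\overline{B}=S^1$, and one must cut at $w$ (which both $f$ and $g$ fix, since $f(B)=B$) to view $f|_{\overline{B}},g|_{\overline{B}}$ as elements of $\Diffb(I)$; this works, but is a case your phrase ``identify $\overline{B}$ with $[0,1]$'' silently includes and is worth making explicit.
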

\bp
Suppose $f,g\in\Diffb(M)$ satisfy
$[f,g]=1$,
and $A\pitchfork B$ for some $A=(p,q)\in\pi_0(\supp f)$ and $B=(x,y)\in\pi_0(\supp g)$. 
Assume further that $A\ne B$.
By symmetry, we only need to consider the following two cases.

First, assume $p< x< q\le y$. By replacing $f$ by $f^{-1}$ if necessary,
we may assume $gx=x< fx<q$. Then $fx\in\supp g$ and so, $gfx\ne fx=fgx$. This contradicts the commutativity of $f$ and $g$.
Note that we have also covered the case when we have a circular ordering $p<y\le x<q\le p$.

Second, we assume $p\le x<y\le q$. 
Again, we may assume $f(t)>t$ for $t\in A$.
For each $b\in B$, we have $\lim_{k\to-\infty} f^k(b)=p$
and
$\lim_{k\to\infty} f^k(b)=q$.
Since $gb\in B$, we have
\[
gp = \lim_{k\to-\infty} gf^kb =  \lim_{k\to-\infty} f^k (gb) = p,\
gq = \lim_{k\to\infty} gf^kb =  \lim_{k\to\infty} f^k (gb) = q.\]
Kopell's Lemma for the closure of $A$ yields a contradiction.
\ep

In the following two lemmas, we explain how conjugation can be used to control the supports of homeomorphisms.

\begin{lem}\label{l:fw}
Let $f,g\in\Homeo_+(M)$.
Suppose $Y\sse Z$ are open intervals in $M$ such that each component of $\supp f$ is either contained in $Y$ or disjoint from $Z$.
If $g(Z)= Z$, then there exist $s,t\in\{-1,1\}$ such that
for $u=gf^sg^{-1}$ and $w=uf^tu^{-1}$
we have $w(Y)\sse Y$.
\end{lem}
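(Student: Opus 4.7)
The plan is to find $s,t\in\{\pm1\}$ so that $w(Y)\subseteq Y$, equivalently (since $u$ is a bijection) so that $f^t(Y')\subseteq Y'$ where $Y':=u^{-1}(Y)$. I will use $s$ to control the left endpoint of $Y'$ and $t$ to control the right endpoint.

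First I extract a rigidity consequence from the hypothesis: the condition that every component of $\supp f$ is either contained in $Y$ or disjoint from $Z$ forces $f$ to fix the endpoints of $Y$ and of $Z$, since any component of $\supp f$ containing such an endpoint would have to straddle both sides of its dichotomy. Consequently $f(Y)=Y$, $f(Z)=Z$, and $f$ is the identity on $Z\setminus Y$; in particular, $u(Z)=Z$. Writing $Y=(a,b)$ and $p=g^{-1}(a)$, $q=g^{-1}(b)$, one computes $u^{-1}(a)=g(f^{-s}(p))$ and $u^{-1}(b)=g(f^{-s}(q))$. The key observation is that if $p\notin Y$ then $f^{-s}(p)=p$ for either $s$, whereas if $p\in Y$ then at least one choice of $s\in\{\pm1\}$ gives $f^{-s}(p)\le p$; in either case, $s$ can be chosen so that the left endpoint $\alpha:=u^{-1}(a)$ satisfies $\alpha\le a$.

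With $\alpha\le a$ in hand, I split on $\beta:=u^{-1}(b)$. If $\beta\le a$, then $Y'\subseteq Z\setminus Y$, so $f^t$ acts as the identity on $Y'$ and $f^t(Y')=Y'$ for either $t$. If $\beta>b$, then $Y'\supseteq Y$, and since $f^t$ fixes $(\alpha,a]\cup[b,\beta)$ pointwise and preserves $Y$, once again $f^t(Y')=Y'$. The interesting subcase is $a<\beta\le b$: here $Y'$ overlaps $Y$ from the left, $f^t(Y')=(\alpha,f^t(\beta))$, and choosing $t$ so that $f^t(\beta)\le\beta$ (always possible, since one of $f(\beta),f^{-1}(\beta)$ must lie weakly left of $\beta$) yields $f^t(Y')\subseteq Y'$. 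The main subtlety is precisely this interplay of the two choices: $s$ must be chosen to prevent $Y'\subsetneq Y$ with both endpoints strictly inside $Y$, in which case no single $t$ would in general force $f^t(Y')\subseteq Y'$; the argument above shows that forcing $\alpha\le a$ via the appropriate $s$ always rules out this bad configuration and reduces the problem to a one-dimensional choice of direction for $t$.
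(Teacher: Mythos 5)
Your proof is correct and takes essentially the same approach as the paper's: after choosing $s$ to normalize one endpoint, you perform a three-way case analysis and choose $t$ to control the other endpoint, relying on the same observation that $f$ is the identity on $\overline{Z}\setminus Y$. The only cosmetic difference is that you track $Y'=u^{-1}(Y)$ relative to $Y$ while the paper tracks $u(Y)$ relative to $Y$; the cases correspond exactly under this dual change of perspective.
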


\bp
Write $Y=(p,q)$ and $Z=(P,Q)$.
Let $u=gfg^{-1}$.
We may assume $p\le up$ by replacing $f$ by $f^{-1}$ if necessary.

\emph{Case 1.} $p\le up< uq\le q$.
Let $w=ufu^{-1}$.
Since $u(Y)\sse Y$, Lemma~\ref{l:commute} (\ref{l:gx}) implies
that $w(Y)\sse Y$. 

\emph{Case 2.} $p\le up \le q< uq\le Q$.
Since $P\le u^{-1}p\le p$, we have $uf^{\pm1}u^{-1}p=uu^{-1}p=p$.
Hence, if $ufu^{-1}q \le q$, we are done: we choose $w= ufu^{-1}$. 
Otherwise, we have $ufu^{-1}q \ge q$, hence $uf^{-1}u^{-1}q \le q$. In this case, we choose $w = uf^{-1}u^{-1}$. 
In either case, we have $wY\sse Y$.

\emph{Case 3.} $p< q\le up< uq< Q$.
Let $w=ufu^{-1}$.
Since $P\le u^{-1}p< u^{-1}q\le p$,
we have $wp=uu^{-1}p=p$ and  $wq=uu^{-1}q=q$.
Hence, $w(Y)=Y$. 
\ep

Let $F(x,y)$ be the free group on $\{x,y\}$ and $g\in F(x,y)$.
We say a word $w\in F(x,y)$ is a \emph{successive conjugation of $x$ by $g$} if 
there exist $n\ge1$ and $s_1,s_2,\ldots,s_n\in\{1,-1\}$ 
such that for the sequence of words in $F(x,y)$ defined by
\[w_1=gx^{s_1}g^{-1}, w_{i+1}=w_ix^{s_{i+1}}w_i^{-1}\text{ for }1\le i\le n-1\]
we have $w=w_n$.

\begin{lem}\label{l:fin}
Let $\YY=\{Y_1,Y_2,\ldots\}$ and $\ZZ=\{Z_1,Z_2,\ldots\}$ be two (possibly infinite) collections of open intervals in $M$, such that $Y_i\sse Z_i$ for each $i$ and $Z_i\cap Z_j=\varnothing$ whenever $i\ne j$.
If $m\ge0$ and $f,g\in\Homeo_+(M)$ satisfy $\supp f\sse \bigcup\YY$ and $\supp g\sse\bigcup\ZZ$, then
there exists a successive conjugation $w(x,y)$ of $x$ by $y$ in $F(x,y)$ such that for $w=w(f,g)$ and $j\le m$, we have
$wY_j\sse Y_j$ and moreover,
$\supp (wfw^{-1})\sse  \bigcup_{j\le m}Y_i \cup \bigcup_{j>m} Z_i$.
\end{lem}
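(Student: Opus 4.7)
The plan is to induct on $m\ge 0$, applying Lemma \ref{l:fw} once per inductive step to lengthen a successive conjugation by two signs, and invoking Lemma \ref{l:commute}(\ref{l:gx}) to preserve the invariances established at earlier stages.

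First I would record two preliminary observations that will be used repeatedly. Since the $Z_j$'s are pairwise disjoint and $\supp g\sse\bigcup\ZZ$, the homeomorphism $g$ fixes $M\setminus\bigcup\ZZ$ pointwise and therefore preserves each $Z_j$ setwise; the identical argument, using $\supp f\sse\bigcup\YY\sse\bigcup\ZZ$, shows that $f$ preserves each $Z_j$ as well. Consequently, every word in $f$ and $g$ preserves each $Z_j$. Moreover, disjointness of the $Z_j$'s forces the $Y_j$'s to be pairwise disjoint, so every connected component of $\supp f$ lies in a unique $Y_j$; in particular, for every index $k$, each component of $\supp f$ is contained in $Y_k$ or disjoint from $Z_k$. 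These are exactly the support hypotheses of Lemma \ref{l:fw} and Lemma \ref{l:commute}(\ref{l:gx}).

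For the base case $m=0$, the conditions $wY_j\sse Y_j$ for $j\le 0$ are vacuous, so I take $w(x,y)=yxy^{-1}$, which is a successive conjugation of length $1$. Then $w(f,g)=gfg^{-1}$ preserves each $Z_j$ and satisfies $\supp(w(f,g)\,f\,w(f,g)^{-1})=w(f,g)(\supp f)\sse\bigcup_j Z_j$, so the moreover clause holds.

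For the inductive step, assume $w^{(m)}(x,y)$ is a successive conjugation of length $n$ such that $g^{(m)}:=w^{(m)}(f,g)$ satisfies $g^{(m)}(Y_j)\sse Y_j$ for every $j\le m$. I would apply Lemma \ref{l:fw} to the pair $(f,g^{(m)})$ with the intervals $Y_{m+1}\sse Z_{m+1}$; the hypotheses are met by the two preliminary observations. The lemma yields $s,t\in\{-1,1\}$ so that, setting $u=g^{(m)}f^s(g^{(m)})^{-1}$ and $w^{(m+1)}(f,g):=uf^tu^{-1}$, one has $w^{(m+1)}(f,g)(Y_{m+1})\sse Y_{m+1}$. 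As a word in $F(x,y)$, $w^{(m+1)}$ is $w^{(m)}$ extended by the two additional signs $s_{n+1}=s$, $s_{n+2}=t$, hence remains a successive conjugation of $x$ by $y$.

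The subtle point, which the whole setup is designed to handle, is showing that the invariances $w^{(m+1)}(f,g)(Y_j)\sse Y_j$ also hold for $j\le m$. This is where Lemma \ref{l:commute}(\ref{l:gx}) does the work: applied with $(f,g^{(m)})$ and the inductive hypothesis $g^{(m)}(Y_j)\sse Y_j$, it gives $u(Y_j)=g^{(m)}f^s(g^{(m)})^{-1}(Y_j)=Y_j$; applied a second time with $(f,u)$ and the just-obtained $u(Y_j)=Y_j$, it gives $w^{(m+1)}(f,g)(Y_j)=uf^tu^{-1}(Y_j)=Y_j$. Thus the signs delivered by Lemma \ref{l:fw} at the new stage cannot damage the invariances achieved at earlier stages. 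The moreover clause then follows immediately, because $w^{(m+1)}(f,g)$ preserves each $Z_j$ and sends each $Y_j$ with $j\le m+1$ into itself, so $\supp(w^{(m+1)}(f,g)\,f\,w^{(m+1)}(f,g)^{-1})=w^{(m+1)}(f,g)(\supp f)\sse\bigcup_{j\le m+1}Y_j\cup\bigcup_{j>m+1}Z_j$, completing the induction.
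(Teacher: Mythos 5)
Your proof is correct and follows essentially the same strategy as the paper: iterate Lemma \ref{l:fw} once per stage, and use Lemma \ref{l:commute}(\ref{l:gx}) twice per stage (once for $u$ and once for $w^{(m+1)}$) to see that the new conjugations do not disturb the invariances $Y_j\mapsto Y_j$ already secured for $j\le m$. The only cosmetic difference is that you anchor the induction with the extra base word $w^{(0)}=yxy^{-1}$, whereas the paper applies Lemma \ref{l:fw} to $g$ directly at the first step, producing a successive conjugation that is one term shorter; both are valid.
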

\bp
For each $Y=Y_i\in\YY$, we define $Z(Y)=Z_i$ so that the group $\form{f,g}$ acts on $Z(Y)$.
Moreover, each component of $\supp f$ is either contained in $Y$ or disjoint from $Z(Y)$.
By Lemma~\ref{l:fw}, we can choose $s_1,t_1\in\{-1,1\}$ such that
for $u_1=u_1(x,y)=yx^{s_1}y^{-1}$
and for $w_1(x,y)=u_1x^{t_1}u_1^{-1}$,
we have $w_1(f,g)(Y_1)\sse Y_1$.

Inductively, suppose we have chosen $ w_i\in F(x,y)\setminus\form{x}$
such that 
$ w_i$ is a successive conjugation of $x$ by $y$
and $ w_i(f,g)(Y_j)\sse Y_j$ for $1\le j\le i<m$.
Since $w_i(f,g)(Z_{i+1})= Z_{i+1}$, 
we can apply Lemma~\ref{l:fw} again. That is, 
for some $s_{i+1},t_{i+1}\in\{-1,1\}$, 
if we let
\[
u_{i+1}(x,y)= w_i(x,y) x^{s_{i+1}}  w_i(x,y)^{-1},
\
w_{i+1}(x,y)=u_i(x,y) x^{t_{i+1}} u_i(x,y)^{-1},\]
then we have $w_{i+1}(f,g)(Y_{i+1})\sse Y_{i+1}$.
By Lemma~\ref{l:commute} (\ref{l:gx}), we see 
$
u_{i+1}(f,g)(Y_j)\sse Y_j$ and
$w_{i+1}(f,g)(Y_j)\sse Y_j$
for $1\le j\le i$.
Inductively, we have $w_m\in\form{x,y}\setminus\form{x}$
such that $w_m(f,g)Y_j\sse Y_j$ for each $1\le j\le m$.
For $j>m$, we have  $w_m(f,g)Y_j\sse w_m(f,g)Z_j\sse Z_j$.
In conclusion, if we put $w=w_m(f,g)$, then
\begin{align*}
\supp wfw^{-1}=w\supp f
&=\bigcup_{j\le m}w(\supp f\cap Y_j)\cup
\bigcup_{j> m}w(\supp f\cap Y_j)
\\
&\sse
\bigcup_{j\le m}Y_j\cup
\bigcup_{j> m}Z_j.\qedhere
\end{align*}
\ep

\section{The right-angled Artin group on a path}\label{s:raag-path}

The main objective of this section is to prove Proposition~\ref{p:abcd}.
Throughout this section, we assume to have a faithful representation $\phi\co A(P_4)\to \Diffb(M)$ such that $\phi(v)$ is grounded for each $v\in V(P_4)$. 
We will continue to assume $M$ is connected.
For brevity, we let each $v\in V(P_4)$ also denote the diffeomorphism $\phi(v)$. 
We label the vertices of $P_4$ as in Figure~\ref{f:p4},
and use the notations $V=V(P_4)=\{a,b,c,d\}$
and $\JJ_v=\pi_0(\supp v)$ for each $v\in V$.
We choose this particular labeling of the vertices so that none of $[a,b],[b,c]$ or $[c,d]$ is trivial.

\begin{figure}[h!]
  \tikzstyle {bv}=[black,draw,shape=circle,fill=black,inner sep=1pt]
\begin{tikzpicture}[thick]
\draw (-1,0) node [bv] {} node [above=.1] {\small $b$} 
-- (0,0)  node [bv] {} node [above=.1] {\small $d$} 
-- (1,0)  node [bv] {} node [above=.1] {\small $a$} 
-- (2,0)  node [bv] {} node [above=.1] {\small $c$};
\end{tikzpicture}%
\caption{The graph $P_4$.}
\label{f:p4}
\end{figure}
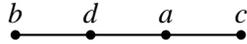

\begin{lem}\label{l:ad}
If $J\in\JJ_a\cap \JJ_d$,
then $\form{a,b,c,d}$ acts on $J$ as an abelian group.
\end{lem}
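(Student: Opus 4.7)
The plan is to show first that each of the four generators preserves $\bar J$ setwise, and then to apply the Abelian Criterion (Theorem~\ref{t:abel}) iteratively on $\bar J$ to force all six pairs of generators to commute there.

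For the preservation step, $a$ and $d$ both preserve $\bar J$ since $\partial J \subseteq \Fix a \cap \Fix d$ and both are orientation-preserving, so the open interval $J$ is mapped to itself. For $b$: since $[b,d]=1$ and both are grounded $\Cbv$ diffeomorphisms, the Disjointness Condition (Lemma~\ref{l:disjoint}) forces the component $J \in \JJ_d$ to be either an element of $\JJ_b$ or entirely disjoint from $\supp b$. In the former case, the same orientation argument gives $b(J)=J$; in the latter, $b|_{\bar J} = \id_{\bar J}$. Either way, $b|_{\bar J} \in \Diffb(\bar J)$. The symmetric argument using $[c,a]=1$ produces $c(\bar J) = \bar J$ and $c|_{\bar J} \in \Diffb(\bar J)$.

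Now pass to the restrictions. Because $J \in \JJ_a \cap \JJ_d$, we have $\Fix(a|_{\bar J}) = \Fix(d|_{\bar J}) = \partial J$, so both of these restrictions are admissible as the middle term $g$ in the Abelian Criterion. The global commutations $[a,d]=[b,d]=[a,c]=1$ restrict to $\bar J$ automatically. I apply the Abelian Criterion with $(g,f,h) = (d|_{\bar J},a|_{\bar J},b|_{\bar J})$ to obtain $[a|_{\bar J},b|_{\bar J}]=1$; then with $(a|_{\bar J},d|_{\bar J},c|_{\bar J})$ to obtain $[d|_{\bar J},c|_{\bar J}]=1$; and finally, using this last relation together with $[b,d]=1$, with $(d|_{\bar J},b|_{\bar J},c|_{\bar J})$ to obtain $[b|_{\bar J},c|_{\bar J}]=1$. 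All six pairs of restricted generators now commute on $\bar J$, so the image of $\langle a,b,c,d\rangle$ in $\Diffb(\bar J)$ is abelian, and the induced action on $J$ is abelian, as required.

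The only real subtlety is the preservation claim for $b$ and $c$: without it one cannot even speak of the restrictions $b|_{\bar J}$ and $c|_{\bar J}$ needed to invoke the Abelian Criterion. This is handled by the Disjointness Condition applied to the two commuting pairs $(b,d)$ and $(c,a)$, after which three essentially routine applications of the Abelian Criterion finish the argument.
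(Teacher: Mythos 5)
Your proof is correct and takes essentially the same route as the paper: you use the Disjointness Condition (applied to the commuting pairs $(b,d)$ and $(c,a)$) to show that $b$ and $c$ either preserve $J$ as a component of their support or fix $J$ pointwise, and then invoke the Abelian Criterion on $\bar J$. The only difference is that the paper compresses the final step into ``By the Abelian Criterion\ldots'' while you helpfully spell out the three successive applications, using first $d|_{\bar J}$ and then $a|_{\bar J}$ as the fully-supported middle element $g$.
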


\bp
For each $v\in\{b,c\}$ and $J'\in\JJ_v$,
the Disjointness Condition  (Lemma~\ref{l:disjoint}) implies that 
either $J\cap J'=\varnothing$ or $J=J'$ holds.
By the Abelian Criterion (Theorem~\ref{t:abel}), we see $\form{a,b,c,d}$ acts as an abelian group on $J$.
\ep

For $f,g\in\Homeo_+(M)$, we define
\begin{align*} 
J(f,g)=
\bigcup\{I_f\cup I_g\co I_f\in\pi_0(\supp f),I_g\in\pi_0(\supp g),I_f\pitchfork I_g,I_f\ne I_g\}\\
\cup
\bigcup\{J\co J\in\pi_0(\supp f)\cap \pi_0(\supp g)\text{ and }\form{f,g}\restriction_J\text{ is nonabelian}\}.
\end{align*}

\begin{lem}\label{l:jfg}
For $f,g\in\Homeo_+(M)$, we have $\supp[f,g]\sse J(f,g)$.
\end{lem}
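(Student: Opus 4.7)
The plan is to prove the contrapositive: if $x \notin J(f,g)$, then $x \in \Fix[f,g]$, i.e., $fg(x)=gf(x)$. The argument is a direct case analysis on where $x$ lies relative to $\supp f$ and $\supp g$. The main small lemma I will invoke (silently or explicitly) is that each component $I$ of $\supp f$ is $f$-invariant, i.e.\ $f(I)=I$: this holds because the endpoints of $I$ lie in $\Fix f$, and $f(I)$ is a connected subset of $\supp f$ whose closure contains these same endpoints.

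Assume $x \notin J(f,g)$. I will consider four cases. First, if $x \notin \supp f \cup \supp g$, then both $f$ and $g$ fix $x$, so $[f,g](x)=x$ trivially. Second, suppose $x \in \supp f \setminus \supp g$, and let $I_f \in \pi_0(\supp f)$ contain $x$. I claim $I_f \cap \supp g = \varnothing$: otherwise pick $y \in I_f$ and $I_g \in \pi_0(\supp g)$ with $y \in I_g$; then $I_f \pitchfork I_g$, and either $I_f \ne I_g$ (in which case $x \in I_f \cup I_g \subseteq J(f,g)$, contradiction) or $I_f=I_g$ (in which case $x \in I_g \subseteq \supp g$, contradicting $x \notin \supp g$). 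So $g$ fixes $I_f$ pointwise; since $f(I_f)=I_f$ gives $f(x)\in I_f$, we get $gf(x)=f(x)=fg(x)$ (the last equality using $g(x)=x$). The third case $x \in \supp g \setminus \supp f$ is symmetric.

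Finally, if $x \in \supp f \cap \supp g$, let $I_f \in \pi_0(\supp f)$ and $I_g \in \pi_0(\supp g)$ each contain $x$. Then $I_f \pitchfork I_g$. If $I_f \ne I_g$ then $x \in I_f \cup I_g \subseteq J(f,g)$, contradiction; hence $I_f=I_g =: J \in \pi_0(\supp f)\cap \pi_0(\supp g)$. Since $x \notin J(f,g)$, the restriction $\langle f,g\rangle|_J$ must be abelian, and in particular $fg(x)=gf(x)$.

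There is no genuine obstacle here: the assertion is essentially a bookkeeping fact about how $[f,g]$ can fail to fix a point, and the slightly delicate ingredient is the reduction in the mixed case $x\in\supp f\setminus\supp g$ to the conclusion $I_f\cap\supp g=\varnothing$, which is where the two clauses in the definition of $J(f,g)$ (the ``overlapping but distinct'' clause and the ``equal components'' clause) both get used simultaneously.
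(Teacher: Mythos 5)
Your proof is correct and takes essentially the same approach as the paper: the paper also observes that $x\notin J(f,g)$ forces one of the same four cases (fixed by both, in a component of $\supp f$ disjoint from $\supp g$, the symmetric case, or in a common component on which $\langle f,g\rangle$ is abelian) and concludes $[f,g]x=x$ in each; you merely spell out the component bookkeeping in slightly more detail.
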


\bp
If $x\in M\setminus J(f,g)$, then one of the following holds.
\be[(i)]
\item
$x\not\in \supp f\cup \supp g$;
\item
$x\in J\in\pi_0(\supp f)$ such that $J\cap\supp g=\varnothing$;
\item
$x\in J\in\pi_0(\supp g)$ such that $J\cap\supp f=\varnothing$;
\item
$x\in J\in\pi_0(\supp f)\cap\pi_0(\supp g)$ such that $\form{f,g}$ acts on $J$ as an abelian group.
\ee
In each of the cases, we have that $[f,g]x=x$.
\ep


\begin{lem}\label{l:cd}
The following statements hold:
\be
\item
For each $J\in\JJ_c\cup\JJ_d$, we have either $J\subseteq J(c,d)$ or $J\cap J(c,d)=\varnothing$.
\item $\supp a\cap J(c,d)=\varnothing$.
\item An interval in $\JJ_b$ cannot properly contain 
a component of $J(c,d)$.
\ee
\end{lem}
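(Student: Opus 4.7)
For (1) I fix $J\in\JJ_c$ (the $\JJ_d$ case is symmetric), pick $x\in J\cap J(c,d)$, and trace which clause of the definition of $J(c,d)$ contains $x$. Either $x\in I_c\cup I_d$ with $I_c\in\JJ_c$, $I_d\in\JJ_d$, $I_c\pitchfork I_d$ and $I_c\ne I_d$, or $x\in J'$ for some $J'\in\JJ_c\cap\JJ_d$ on which $\form{c,d}$ acts nonabelianly. If $x\in I_c$ or $x\in J'$, uniqueness of components of $\supp c$ forces $J=I_c$ or $J=J'$, and in either case $J\subseteq J(c,d)$. In the remaining subcase $x\in I_d\setminus I_c$, a short check rules out $J=I_d$ (it would make $I_c$ and $J$ two intersecting components of $\supp c$, forcing $I_c=J=I_d$, contrary to $I_c\ne I_d$), so $(J,I_d)$ is itself a valid defining tuple and $J\subseteq J(c,d)$.

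For (2), let $I_a\in\JJ_a$ be the component containing a given $x\in\supp a$. The Disjointness Condition (Lemma~\ref{l:disjoint}) applied to $[a,c]=1$ and $[a,d]=1$ implies that $I_a$ is either equal to or disjoint from each element of $\JJ_c\cup\JJ_d$. Three of the four resulting configurations make $I_a\cap J(c,d)=\varnothing$ immediate, since any potential witness point in $I_a$ would force $I_a$ to meet $\supp c$ or $\supp d$, which by hypothesis it does not. The essential case is $I_a\in\JJ_a\cap\JJ_c\cap\JJ_d$; here $a,c,d$ all restrict to diffeomorphisms of $\overline{I_a}$ fixing $\partial I_a$, and I apply the Abelian Criterion (Theorem~\ref{t:abel}) with $g=a|_{\overline{I_a}}$ (whose fixed set is precisely $\partial I_a$) to deduce $[c,d]|_{\overline{I_a}}=1$. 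Hence $\form{c,d}$ acts abelianly on $I_a$, and both clauses of the definition of $J(c,d)$ exclude $I_a$.

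For (3), I argue by contradiction: suppose $J\in\JJ_b$ properly contains a component $C$ of $J(c,d)$. A short clause-check rules out $C\subseteq\supp c\setminus\supp d$ (any point of $C$ satisfying either defining clause forces $C\cap\supp d\ne\varnothing$), so $C$ meets some $I_d\in\JJ_d$, and Disjointness for $[b,d]=1$ then forces $J=I_d$, whence $C\subsetneq I_d$. \emph{The main obstacle} is the dichotomy on $I_d$: I must show that either $I_d\subseteq J(c,d)$, yielding $C=I_d=J$ and contradicting proper containment, or $J(c,d)\cap I_d=\varnothing$, contradicting $C\ne\varnothing$. Assuming $I_d\not\subseteq J(c,d)$ I extract two structural facts --- that no $I_c\in\JJ_c$ distinct from $I_d$ satisfies $I_c\pitchfork I_d$, and that if $I_d\in\JJ_c$ then $\form{c,d}|_{I_d}$ is abelian --- and then a clause-by-clause point chase, using pairwise disjointness of components of $\supp d$, shows $J(c,d)\cap I_d=\varnothing$ and closes the argument.
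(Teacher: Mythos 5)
Your argument is correct, and parts (1) and (2) track the paper's proof closely: the paper treats (1) as immediate, and its proof of (2) is exactly the Disjointness-plus-Abelian-Criterion argument you give, just without the explicit four-way case split. One small wording slip in (2): the sentence ``a witness would force $I_a$ to meet $\supp c$ or $\supp d$, which by hypothesis it does not'' only literally covers the configuration where $I_a$ is disjoint from both supports; in the mixed cases (say $I_a\in\JJ_c$ but $I_a\cap\supp d=\varnothing$) the witness forces $I_a$ to meet the one support it avoids, and that is what contradicts the hypothesis. The intent is clear and the argument is recoverable, but as written the sentence does not justify all three ``easy'' cases.

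Part (3) is where your route genuinely diverges from the paper. You record only that the component $C$ \emph{meets} some $I_d\in\JJ_d$, and then need the dichotomy of (1) on $I_d$ to finish, which you re-derive from scratch via ``two structural facts'' rather than citing the part you have already proven. The paper's argument is shorter: $C$ in fact \emph{contains} some $I_d\in\JJ_d$, because every connected piece contributing to $J(c,d)$ is either of the form $I_c\cup I_d$ or an interval $J'\in\JJ_c\cap\JJ_d$, and in both cases an entire component of $\supp d$ is a subset. Once one has $I_d\sse C\subsetneq J$, the Disjointness Condition for $[b,d]=1$ gives an immediate contradiction ($I_d$ and $J$ intersect but are unequal), with no need for the dichotomy at all. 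So the ``main obstacle'' you highlight is an artifact of working with $C\pitchfork I_d$ rather than $I_d\sse C$; strengthening ``meets'' to ``contains'' eliminates it, and even without that you could simply have cited (1) instead of reproving it.
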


\bp
(1) is immediate. For (2), suppose $I_a\in\JJ_a$ nontrivially intersects  $J\in\JJ_v$
for some $v\in\{c,d\}$ such that $J\sse J(c,d)$. 
By the Disjointness Condition and the Abelian Criterion, we have $I_a=J$ and $\form{a,c,d}$ acts on $J$ as an abelian group.
This contradicts the hypothesis that $J\sse J(c,d)$.

(3)
Each component of $J(c,d)$ contains an interval $J$ in $\JJ_d$.
The Disjointness Condition implies that $J$ cannot be properly contained in $\supp b$.
\ep

\begin{rem}
If $\Fix a$ has an empty interior, then $\supp [c,d]\sse J(c,d)\sse M\setminus \overline{\supp a}=\varnothing$ and $[c,d]=1$ by Lemmas~\ref{l:jfg} and~\ref{l:cd}.  This recovers the \emph{Abelian Criterion} in the form given in \cite[Lemma 3.2]{FF2001}.
\end{rem}

We say a sequence of four intervals $(I_1,I_2,I_3,I_4)$ forms a \emph{chain}
when $I_i\pitchfork I_j$ if and only if $|i-j|=1$.
Figure~\ref{f:coint} illustrates  examples where sequences of four intervals $(I_a,I_b,I_c,I_d)$ form chains.
\begin{figure}[h!]
\subfloat[(a)]{\begin{tikzpicture}[thick,scale=.5]
\draw (-5,0) -- (-1,0);
\draw (-3,0) node [above] {\small $I_a$}; 
\draw (-2,.5) -- (2,.5);
\draw (0,.5) node [above] {\small $I_b$}; 
\draw (1,0) -- (5,0);
\draw (3,0) node [above] {\small $I_c$}; 
\draw (4,.5) -- (8,.5);
\draw (6,.5) node [above] {\small $I_d$}; 
\end{tikzpicture}}
\quad\quad
\subfloat[(b)]{\begin{tikzpicture}[thick,scale=.5]
\draw (-5,0) -- (-1,0);
\draw (-3,0) node [above] {\small $I_a$}; 

\draw (-2,.5) -- (2,.5);
\draw (0,.5) node [above] {\small $I_b$}; 

\draw (1,0) -- (8,0);
\draw (4,0) node [above] {\small $I_c$}; 

\draw (5,.5) -- (7,.5);
\draw (6,.5) node [above] {\small $I_d$}; 
\end{tikzpicture}}
\caption{Chains of intervals.}
\label{f:coint}
\end{figure}
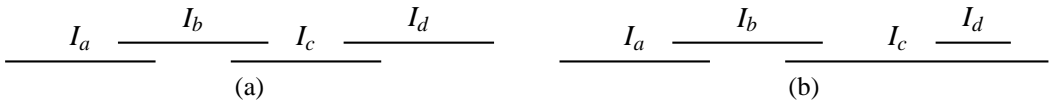

\begin{lem}\label{l:good}
If $I_a\in\JJ_a$ and $I_d\in\JJ_d$ satisfy 
$cb I_a\pitchfork I_d$,
then one of the following holds.
\be
\item $I_a= I_d=cbI_a$ and $\form{a,b,c,d}$ acts on $I_a$ as an abelian group.
\item $I_a\cap I_d=\varnothing$ and there exist $I_b\in\JJ_b$ and $I_c\in\JJ_c$
such that $bI_a\pitchfork I_c$ 
and that $(I_a,I_b,I_c,I_d)$ forms a chain;
in particular, we have $I_d\sse J(c,d)$.
\ee\end{lem}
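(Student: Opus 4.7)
The plan is to split on the dichotomy forced by $[a,d]=1$ and the Disjointness Condition (Lemma~\ref{l:disjoint}): either $I_a=I_d$ or $I_a\cap I_d=\varnothing$. In the first case I would invoke Lemma~\ref{l:ad}, which yields that $\form{a,b,c,d}$ acts on $I_a=I_d$ as an abelian group; in particular both $b$ and $c$ preserve $I_a$ setwise, so $cbI_a=I_a=I_d$ and conclusion~(1) is immediate.

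For the remaining case $I_a\cap I_d=\varnothing$, the strategy is to pick a witness point $x\in I_a$ with $cb(x)\in I_d$ and trace its image first under $b$ and then under $c$. First I would show $x\in\supp b$: otherwise $b(x)=x$ and $c(x)\in I_d$, forcing some component $K\in\JJ_c$ containing $x$; the Disjointness Condition for the commuting pair $\{a,c\}$ then gives $K=I_a$, and the setwise $c$-invariance of $K$ forces $c(x)\in I_a$, contradicting $I_a\cap I_d=\varnothing$. Hence $x$ lies in some $I_b\in\JJ_b$ with $x\in I_b\cap I_a$, and Disjointness for $\{b,d\}$ rules out $I_b=I_d$ (which would give $I_a\pitchfork I_d$), whence $I_b\cap I_d=\varnothing$.

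I would then set $y:=b(x)\in I_b$ and rerun the same analysis at $y$: the relation $c(y)\in I_d$ together with $I_b\cap I_d=\varnothing$ forces $y\in\supp c$, producing $I_c\in\JJ_c$ with $y\in I_b\cap I_c$; the setwise $c$-invariance of $I_c$ yields $c(y)\in I_c\cap I_d$, so $I_c\pitchfork I_d$. The only non-adjacent disjointness left to verify is $I_a\cap I_c=\varnothing$, which I would dispatch by the same move: $I_a=I_c$ would give $c(y)\in I_a$ and again contradict $I_a\cap I_d=\varnothing$. The pitchfork $bI_a\pitchfork I_c$ is witnessed by $y=b(x)$, and $I_c\ne I_d$ (otherwise $I_b\pitchfork I_c=I_d$ would contradict $I_b\cap I_d=\varnothing$) places $I_d$ in the first union defining $J(c,d)$.

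The hard part is almost entirely organizational: the argument is a systematic push--pull that repeatedly combines the Disjointness Condition for the three commuting pairs $\{a,d\}$, $\{a,c\}$, $\{b,d\}$ with the setwise invariance $v(I)=I$ for each $v\in V$ and each $I\in\JJ_v$. The key trick is tracking a single witness point in $I_a\cap(cb)^{-1}(I_d)$ through the composition, after which the chain and the inclusion $I_d\sse J(c,d)$ fall out via a sequence of short ``escape and land'' contradictions rather than by any delicate analytical estimate.
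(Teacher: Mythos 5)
Your proof is correct and follows essentially the same push--pull argument through the Disjointness Condition and the Abelian Criterion as the paper's. The only organizational difference is that the paper first establishes the slightly stronger fact $bx\notin I_a$ (by deriving $cbx\in I_a$ from the assumption $bx\in I_a$), which then handles $I_a\cap I_c=\varnothing$ at once; you instead prove $x\in\supp b$ directly and dispatch $I_a\ne I_c$ by a separate short contradiction via $cy\in I_a$, and both routes are sound.
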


\bp
The case $I_a=I_d$ is obvious from Lemma~\ref{l:ad}, so let us assume $I_a\cap I_d=\varnothing$.
Choose $x\in I_a$ such that $cbx\in I_d$.
Let us first assume $bx\in I_a$. 
The Disjointness Condition implies that each interval belonging to the family $\JJ_c\cup\JJ_d$ is either
equal to or disjoint from $I_a$. So we have $cbx\in I_a$. 
But this is a contradiction, for $I_a$ and $I_d$ are disjoint.
It follows that $x\in I_a\cap I_b$ and $bx\in I_b\setminus I_a$ for some $I_b\in\JJ_b$.
We have $I_b\cap I_d=\varnothing$ and so, $cbx\not\in I_b$.
This means that there exist $I_c\in\JJ_c$ such that $bx\in I_b\cap I_c$ and
 $cbx\in I_c\cap I_d$.
Since $I_c\pitchfork I_d$ and $I_c\ne I_d$, we see $I_d\sse J(c,d)$. 
\ep

We set $\YY=\pi_0\left(M\setminus\overline{J(c,d)}\right)$.

\begin{lem}\label{l:slippery}
If $I_a\in\JJ_a$ and $I_a\sse Y$ for some $Y\in\YY$,
then $cb I_a$ is contained in the component of 
$M\setminus \left(\bigcup\YY\setminus\{Y\}\right)$ which contains $Y$.
\end{lem}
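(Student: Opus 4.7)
The plan is to track $I_a$ through the successive homeomorphisms $b$ and then $c$, using Lemma~\ref{l:cd} at each step to prevent the image from escaping into a wrong component of $\YY$. Let $W$ denote the component of $M\setminus(\bigcup\YY\setminus\{Y\})$ containing $Y$. Since $M=\overline{J(c,d)}\cup\bigcup\YY$ disjointly, the set $M\setminus(\bigcup\YY\setminus\{Y\})$ equals $Y\cup\overline{J(c,d)}$, whose component containing $Y$ is
\[ W=\overline{Y}\cup\bigcup\{\overline{J}:J\in\pi_0(J(c,d)),\ \overline{J}\cap\overline{Y}\ne\varnothing\}, \]
that is, $Y$ together with the (at most two) components of $J(c,d)$ adjacent to $Y$. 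The goal is to show $cbI_a\sse W$.

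The first step is to control $bI_a$. Let $\mathcal{I}_b=\{I_b\in\JJ_b:I_b\cap I_a\ne\varnothing\}$. Since $b$ fixes $M\setminus\supp b$ pointwise and preserves each component of $\supp b$ setwise, we have $bI_a\sse I_a\cup\bigcup_{I_b\in\mathcal{I}_b}I_b$. By assumption $I_a\sse Y\sse W$, so it suffices to show $I_b\sse W$ for each $I_b\in\mathcal{I}_b$. Such an $I_b$ meets $Y$; if in addition $I_b$ met some $Y''\in\YY$ with $Y''\ne Y$, then since distinct components of the open set $M\setminus\overline{J(c,d)}$ cannot share a boundary point, some component $J'$ of $J(c,d)$ would lie strictly between $Y$ and $Y''$, and the connected interval $I_b$ would properly contain $J'$, contradicting Lemma~\ref{l:cd}(3). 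Hence $I_b\sse W$, so $bI_a\sse W$.

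The second step is to push this forward through $c$. For $y\in bI_a\sse W$, either $y\notin\supp c$, in which case $cy=y\in W$, or $y\in I_c$ for some $I_c\in\JJ_c$; in the latter case $cy\in I_c$ because $c$ preserves $I_c$ setwise, so it suffices to verify $I_c\sse W$. By Lemma~\ref{l:cd}(1), either $I_c\sse J(c,d)$ or $I_c\cap J(c,d)=\varnothing$. If $I_c\sse J(c,d)$, then $I_c$ cannot meet $Y$ (which is disjoint from $J(c,d)$), so $I_c$ meets one of the $J(c,d)$-components of $W$ and, being connected, lies inside it, hence in $W$. If $I_c\cap J(c,d)=\varnothing$, then the open interval $I_c$ meets $Y$ and cannot cross into a non-adjacent $Y''$ without passing through a $J(c,d)$-component, so $I_c\sse Y\sse W$.

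The main subtlety is the first step: ruling out that a $\JJ_b$-interval crossing $I_a$ could ``bridge'' $Y$ to a distant $Y''\in\YY$. Lemma~\ref{l:cd}(3) is tailored precisely to this obstruction. The second step is comparatively routine once the dichotomy of Lemma~\ref{l:cd}(1) is invoked and combined with the connectedness of each $I_c$.
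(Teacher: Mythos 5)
Your proof is correct, and it takes a genuinely different route from the paper's. The paper argues by contradiction in two sub-arguments: first a Claim that $cbI_a$ is disjoint from every $Y'\in\YY\setminus\{Y\}$, and then a second part ruling out that $cbI_a$ lies in a component $Z\neq W$ of $M\setminus(\bigcup\YY\setminus\{Y\})$. Both sub-arguments trace an individual point $x\mapsto bx\mapsto cbx$, performing a case analysis on whether $bx\in I_a$, whether $cbx\in I_b$, etc., and they also invoke the observation that $I_b\cap\supp d=\varnothing$ (from the Disjointness Condition with $[b,d]=1$). Your argument instead tracks sets: you show $bI_a\sse W$ by controlling which $\JJ_b$-intervals can meet $I_a$, and then $cbI_a\sse W$ by controlling which $\JJ_c$-intervals can meet $bI_a$, applying the dichotomy of Lemma~\ref{l:cd}(1). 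This factorization into a $b$-step and a $c$-step makes the paper's second sub-argument unnecessary, since containment in $W$ is established directly rather than deduced from disjointness with the $Y'$. The one underlying fact both proofs rest on is the same --- Lemma~\ref{l:cd}(3), that a $\JJ_b$-interval cannot properly contain a component of $J(c,d)$ --- and both need the small topological observations that $\overline{J(c,d)}$ has no isolated points and that any open interval meeting two distinct members of $\YY$ properly swallows a $J(c,d)$-component. Two cosmetic points worth tightening in your writeup: the explicit description of $W$ as $\overline{Y}$ together with the adjacent $\overline{J}$'s is not quite right in degenerate cases where components of $J(c,d)$ accumulate at $\partial Y$, but you never actually use it (the abstract characterization of $W$ as a connected component suffices for both maximality arguments); and in the case $I_c\cap J(c,d)=\varnothing$ the cleaner phrasing is that $I_c$, being open and disjoint from $J(c,d)$, is disjoint from $\overline{J(c,d)}$ and hence lies in $\bigcup\YY$, so connectedness forces $I_c\sse Y$ once you know $I_c$ meets $Y$.
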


\bp We have the following:
\begin{claim*}
For each  $Y'\in\YY\setminus\{Y\}$, the intervals $cb I_a$ and $Y'$ are disjoint.\end{claim*}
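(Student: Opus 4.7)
The plan is to prove the stronger statement $cbI_a \sse W$, where $W$ denotes the component of $M\setminus(\bigcup\YY\setminus\{Y\})$ containing $Y$; the claim then follows since $W\cap Y'=\varnothing$ for every $Y'\in\YY\setminus\{Y\}$. I split this into two facts: (a) $c(W)=W$, and (b) $b(I_a)\sse W$. Once both are in hand, $cbI_a = c(b(I_a))\sse c(W) = W$ completes the proof.

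For (a), the key preliminary is a dichotomy: every $I_c\in\JJ_c$ satisfies either $I_c\sse J(c,d)$ or $I_c\cap\overline{J(c,d)}=\varnothing$, and the analogous statement holds for every $I_d\in\JJ_d$. This follows directly from the definition of $J(c,d)$ together with the fact that distinct components of $\supp c$ are disjoint: if $I_c$ meets $J(c,d)$, then it meets some piece $I_{c'}\sse J(c,d)$ (forcing $I_c = I_{c'}\sse J(c,d)$) or some piece $I_d\sse J(c,d)$ with $I_d\pitchfork I_c$, in which case either $I_c = I_d\sse J(c,d)$ or $I_c\ne I_d$ and the defining clause places $I_c\cup I_d$ in $J(c,d)$. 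The openness of $I_c$ and $J(c,d)$ upgrades the conclusion to $\overline{J(c,d)}$. This dichotomy implies that no component of $\supp c\cup\supp d$ meets any boundary point $\partial Y''$ for $Y''\in\YY$, since such a point lies in $\overline{J(c,d)}$ but is accessible from $Y''\sse M\setminus\overline{J(c,d)}$. Hence both $c$ and $d$ fix these separating points, and since they are orientation-preserving (and grounded), they must preserve each $Y''\in\YY$ and each component of $\overline{J(c,d)}$ setwise. In particular $c(W)=W$.

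For (b), let $x\in I_a\sse Y$. If $x\in\Fix b$ then $b(x) = x\in W$; otherwise $x\in I_b$ for some $I_b\in\JJ_b$ and $b(x)\in I_b$, so it suffices to show $I_b\sse W$. Suppose for contradiction that $I_b$ contains a point in some $Y''\in\YY\setminus\{Y\}$. Since $I_b$ is an open interval in the one-manifold $M$ containing $x\in Y$ and a point in $Y''\ne Y$, connectedness of $I_b$ forces it to contain the whole sub-arc from $x$ to that point, which must traverse in full some component $C$ of $\overline{J(c,d)}$; thus $I_b\supseteq C$. Every such $C$ is a nondegenerate closed interval and hence contains at least one component $C'$ of $J(c,d)$ (any $p\in C\cap J(c,d)$ lies in its $J(c,d)$-component, which is connected and so contained in $C$). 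Then $I_b\supsetneq C\supseteq C'$, with strictness from $x\in I_b\setminus C'$, contradicting Lemma~\ref{l:cd}(3). The main technical obstacle is the dichotomy in (a); once that is secured, the invariance $c(W)=W$ and the control on $I_b$ via Lemma~\ref{l:cd}(3) both follow cleanly.
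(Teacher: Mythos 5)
Your proof is correct, and it takes a genuinely different route from the paper's. The paper argues by contradiction from a single point: it supposes $cbx\in Y'$ for some $x\in I_a$, splits cases according to whether $bx\in I_a$ and whether $cbx\in I_b$, and in each branch produces an interval $I_b$ or $I_b\cup I_c$ straddling both $Y$ and $Y'$, which is then killed by Lemma~\ref{l:cd}(3). You instead prove the stronger containment $cbI_a\sse W$ --- which is in fact the full content of Lemma~\ref{l:slippery}, for which the paper needs a further argument after the Claim --- by factoring $cb=c\circ b$ and controlling each factor separately. The key structural observation you isolate, which the paper does not, is that $c$ and $d$ fix every boundary point of every $Y''\in\YY$; this follows from upgrading Lemma~\ref{l:cd}(1) to the closure $\overline{J(c,d)}$ (legitimate since $I_c$, $I_d$, $J(c,d)$ are all open), and it yields the reusable invariance $c(W)=W$. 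Your step~(b) is closer in spirit to the paper's use of Lemma~\ref{l:cd}(3), but avoids having to chase $cbx$ into an $I_c$. Two places to tighten the write-up: the inference ``in particular $c(W)=W$'' needs one more sentence, since $W$ is a component of $M\setminus\left(\bigcup\YY\setminus\{Y\}\right)$ rather than of $\overline{J(c,d)}$ or $\bigcup\YY$; the clean route is that $c$ preserves each $Y''$, hence preserves $\bigcup\YY\setminus\{Y\}$ and permutes the components of its complement, and stabilizes the one containing $Y=c(Y)$. Also, the assertion that the subarc from $x$ to a point of $Y''$ ``must traverse in full some component $C$ of $\overline{J(c,d)}$'' deserves a line of justification: the supremum of the portion of the subarc lying in $Y$ is an endpoint of some component $C$ of $\overline{J(c,d)}$, and since the destination lies in $Y''\sse M\setminus\overline{J(c,d)}$ the component $C$ cannot extend past it, so $C$ is trapped inside the subarc.
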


Suppose the contrary and choose $x\in I_a$ and $Y'\in\YY\setminus\{Y\}$ such that $cb x\in Y'$.
If $bx\in I_a$ then $cb x\in I_a\subseteq Y$ and we have a contradiction.
So $bx\not\in I_a$, and $\{x,bx\}\subseteq I_b$ for some $I_b\in\JJ_b$.
Since $I_a\pitchfork I_b$ and $I_a\ne I_b$, we see $I_b\not\in\JJ_d$. So $I_b\cap \supp d=\varnothing$.
If $cbx \in I_b$, then $I_b\pitchfork Y'$.
From $I_b\pitchfork Y$, we see that $I_b$ would properly contain a component of $J(c,d)$. 
This contradicts Lemma~\ref{l:cd}. 
It follows that $cbx\not\in I_b$, and that we can find $I_c\in\JJ_c$ such that $\{bx, cbx\}\subseteq I_c$.
In particular, $cbx\in I_c\cap Y'$. Since the interval $I_b\cup I_c$ intersects both $Y$ and $Y'$,
a component $J$ of $J(c,d)$ is contained in $I_b\cup I_c$. By Lemma~\ref{l:cd}, the interval $I_b$ cannot properly contain $J$, and so, $I_c\sse J(c,d)$. Since $cbx \in Y'\sse M\setminus J(c,d)$, this is a contradiction.

To complete the proof, let us assume that 
$cbI_a$ is contained in some component $Z$ of $M\setminus\left(\bigcup\YY\setminus\{Y\}\right)$ which does not contain $Y$.
Fix $x\in I_a$ so that we have $cbx\in Z$.
Each interval joining $x$ and $cbx$ contains an interval in $\YY\setminus\{Y\}$.
As in the previous paragraph,
one can see that there exist $I_v\in\JJ_v$ for $v\in\{b,c\}$
such that $bx\in I_b\cap I_c\setminus I_a$ and $cbx\in I_c\setminus I_b$,
and moreover, $I_b\cup I_c$ is an interval containing both $x$ and $cbx$.
By assumption,  $I_b\cup I_c$ contains an interval $Y'\in\YY\setminus\{Y\}$.
This implies that $I_b\cup I_c$ contains at least one components $J$ of $J(c,d)$ such that $J\cap I_c=\varnothing$. This implies that  $I_b$ would properly contain $J$,  and we have a contradiction.
\ep

By Lemma~\ref{l:slippery},
for each $Y\in \YY$ there uniquely exists a minimal open interval $Z(Y)$ contained in \[M\setminus\left(\bigcup\YY\setminus\{Y\}\right)\] such that $Y\sse Z(Y)$
and such that for each $I_a\in \JJ_a$ with $I_a\subseteq Y$, we have $cb I_a\subseteq Z(Y)$.

\begin{lem}\label{l:jzy}
If an open interval $J$ intersects $Z(Y)$ for some $Y\in\YY$,
then either $J\sse Z(Y)$ or $cb I_a\pitchfork J$ for some $I_a\in\JJ_a$ contained in $Y$.
\end{lem}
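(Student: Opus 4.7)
The plan is to argue by case analysis on the endpoint of $Z(Y)$ that $J$ crosses. Write $Z(Y)=(\alpha,\beta)$; since $J\cap Z(Y)\ne\varnothing$ and $J\not\sse Z(Y)$, the interval $J$ must contain at least one endpoint of $Z(Y)$, so by symmetry I will assume $\beta\in J$. By construction $Z(Y)$ is the convex hull of
\[
W := Y\cup\bigcup\{cb I_a : I_a\in\JJ_a,\ I_a\sse Y\}
\]
taken inside the component of $M\setminus(\bigcup\YY\setminus\{Y\})$ that contains $Y$; the containment $cbI_a\sse{}$this component for each such $I_a$ is exactly Lemma~\ref{l:slippery}. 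In particular $\beta=\sup W$, so $\beta$ is a left-sided limit point of $W$.

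The easier of the two subcases is when the accumulation at $\beta$ comes from the family $\{cbI_a\}$: some $cbI_a$ then meets every left-neighborhood of $\beta$, and since $J$ is an open interval containing $\beta$, the conclusion $cbI_a\pitchfork J$ is immediate.

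The harder subcase is when $\beta=\sup Y$ and every $cbI_a$ with $I_a\sse Y$ is uniformly bounded away from $\beta$. Here the plan is to derive a contradiction from the assumption that $cbI_a\cap J=\varnothing$ for every $I_a\sse Y$, using the minimality of $Z(Y)$ together with the fact that $\beta$ then lies in $\overline{J(c,d)}$. Concretely, $J$ extends past $\beta$ into $\overline{J(c,d)}$, and the argument combines Lemma~\ref{l:slippery} with the constraints on $\JJ_b$ and $\JJ_c$ near a boundary point of $J(c,d)$ furnished by Lemmas~\ref{l:cd} and~\ref{l:good} to force the existence of some $I_a\sse Y$ whose image $cbI_a$ reaches close to $\beta$, contradicting the standing assumption. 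The main obstacle is precisely this last case: ruling out the scenario in which the $cbI_a$'s uniformly avoid a neighborhood of $\beta=\sup Y$. This requires a careful tracking of the action of $cb$ on points of $Y$ close to its endpoint $\beta$ and a genuine use of the structure of $J(c,d)$ around $\beta$, rather than the purely convex-geometric reasoning that suffices in the first subcase.
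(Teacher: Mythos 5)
Your first subcase is in fact the paper's entire argument: by minimality, $Z(Y)$ is the open convex hull of $W = Y \cup \bigcup\{cbI_a : I_a\in\JJ_a,\ I_a\sse Y\}$, so $W$ accumulates at the endpoint $\beta$, and when that accumulation comes from the family $\{cbI_a\}$, any open $J$ containing $\beta$ meets one of them. The trouble is that your plan for the second subcase cannot succeed, because there is no contradiction to derive there: as literally stated, the lemma fails in that subcase. For instance, if $\supp a\cap Y=\varnothing$, then $Z(Y)=Y$, so both endpoints of $Z(Y)$ are endpoints of $Y$, and there is no $I_a\sse Y$ at all; a small open interval $J$ straddling $\sup Y$ then meets $Z(Y)$, is not contained in $Z(Y)$, and meets no $cbI_a$. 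The same failure occurs whenever $\sup(\supp a\cap Y)<\sup Y$. No further analysis of $J(c,d)$, of the chain structure from Lemma~\ref{l:good}, or of $cb$ near $\sup Y$ will repair the statement in this generality.

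What makes the lemma usable is a hypothesis left implicit: in every application the interval $J$ is disjoint from $Y$. In Lemma~\ref{l:z-disjoint} one takes $J=Z(Y')$, or $J=cbI'_a\sse Z(Y')$, for $Y'\ne Y$; both lie in $M\setminus\bigl(\bigcup\YY\setminus\{Y'\}\bigr)$ and hence miss $Y$. In case (2) of Lemma~\ref{l:ag} one takes $J=I_d$ with $I_d\not\sse Z(Y)$; by part (1) of Lemma~\ref{l:cd}, $I_d$ lies either inside $J(c,d)$ or inside some $Y'\in\YY$, and $Y'=Y$ would give $I_d\sse Y\sse Z(Y)$, so again $I_d\cap Y=\varnothing$. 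Once you add the hypothesis $J\cap Y=\varnothing$, your hard subcase is vacuous: $J$ is open and contains $\beta$, hence contains a one-sided neighborhood of $\beta$, and if $\sup Y$ were equal to $\beta$ that neighborhood would meet $Y$. Therefore $\sup Y<\beta$, the accumulation of $W$ at $\beta$ must be through the $cbI_a$'s, and your first subcase finishes. So the repair is to notice and impose the disjointness $J\cap Y=\varnothing$ rather than to try to refute the second subcase.
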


\bp
Suppose $J\not\sse Z(Y)$ so that one of the endpoints of $Z(Y)$ belongs to $J$.
Then a component of $J\cap Z(Y)$ must intersect $cb I_a$ for some $I_a\in\JJ_a$ satisfying $I_a\sse Y$, by the minimality of $Z(Y)$.
\ep

\begin{lem}\label{l:z-disjoint}
For two distinct intervals $Y$ and $Y'$ in $\YY$, we have $Z(Y)\cap Z(Y')=\varnothing$.
\end{lem}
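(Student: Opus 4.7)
The strategy is a proof by contradiction. I will assume $Z(Y)\cap Z(Y')\ne\varnothing$ for some distinct $Y,Y'\in\YY$ and deduce a violation of Lemma~\ref{l:cd}(3).

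First, Lemma~\ref{l:slippery} confines $Z(Y)$ to the component $W_Y$ of $\overline{J(c,d)}\cup Y$ containing $Y$; this component is $Y$ together with the closures of the two $J(c,d)$-components flanking it. Thus $W_Y\cap W_{Y'}$ can be nonempty only when $Y$ and $Y'$ share an adjacent $J(c,d)$-component $J$, and in that case $W_Y\cap W_{Y'}=\overline{J}$. This reduces to the situation $J=J_R^Y=J_L^{Y'}$, in which $Z(Y)\cap Z(Y')\sse\overline{J}$.

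Two applications of Lemma~\ref{l:jzy} now produce an $I_a\in\JJ_a$ with $I_a\sse Y$ and $cbI_a\sse J$. First, apply it with the open interval taken to be $Z(Y')$: since $Z(Y')$ meets $Z(Y)$ but is not contained in $Z(Y)$ (as $Y'\sse Z(Y')\setminus Z(Y)$), some such $I_a$ satisfies $cbI_a\pitchfork Z(Y')$. Apply Lemma~\ref{l:jzy} again with the open interval taken to be $cbI_a$: the alternative ``$cbI_{a'}\pitchfork cbI_a$ for some $I_{a'}\sse Y'$'' is ruled out by injectivity of $cb$ and $I_a\cap I_{a'}\sse Y\cap Y'=\varnothing$, so $cbI_a\sse Z(Y')$, and hence $cbI_a\sse Z(Y)\cap Z(Y')\sse\overline{J}$. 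Because $cbI_a$ is open and $\overline{J}\setminus J$ consists of two points, $cbI_a\sse J$. The symmetric argument yields $I_{a'}\in\JJ_a$ with $I_{a'}\sse Y'$ and $cbI_{a'}\sse J$.

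I then analyze the action of $b$ on $I_a$. A short argument shows that $c$ fixes every boundary point of $Y$ and of $J$: any $I_c\in\JJ_c$ containing such a boundary point would have to meet, hence be contained in, a component of $J(c,d)$, contradicting the fact that the boundary point in question does not lie in $J(c,d)$ itself. Hence $c$ preserves each of $Y$ and $\overline{J}$ setwise. For any $x\in I_a$, if $x\notin\supp b$ then $cbx=cx\in c(Y)=Y$, contradicting $cbx\in J$; and if $bx\in I_a$ then $cbx\in c(I_a)\sse Y$, again a contradiction. Consequently $I_a\subsetneq I_b$ for a unique $I_b\in\JJ_b$, and Disjointness for the commuting pair $(b,d)$ forces $I_b\cap\supp d=\varnothing$. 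Since $c$ also preserves $\overline{J}$, the inclusion $cbI_a\sse J$ pulls back to $bI_a\sse J$, so $I_b$ meets $J$; as an open connected interval meeting both $Y$ and $J$, $I_b$ contains their shared endpoint $q$. Lemma~\ref{l:cd}(3) bars $I_b$ from properly containing $J$, and as in Lemma~\ref{l:slippery} this pins $I_b$ inside $W_Y$, so $I_b$ stops short of $Y'$. The symmetric analysis yields $I_{a'}\subsetneq I_{b'}\in\JJ_b$ with $I_{b'}\cap\supp d=\varnothing$ and $q'\in I_{b'}\sse W_{Y'}$.

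If $I_b=I_{b'}$, then this single component contains $I_a\sse Y$ and $I_{a'}\sse Y'$ and hence strictly contains $J$, contradicting Lemma~\ref{l:cd}(3). The main obstacle is the complementary case $I_b\ne I_{b'}$, in which the two disjoint components of $\supp b$ meet $J$ from opposite ends. I plan to handle this by observing that $I_b\cap J$ and $I_{b'}\cap J$, being disjoint from $\supp d$, lie in $\supp c\cap J$ and so are each contained in a component of $\supp c$ included in $J$; a short combinatorial argument shows that these components must be a common $I_c\in\JJ_c$, in fact $I_c=J$. Then the combined interval $I_b\cup I_c\cup I_{b'}$ straddles $Y$ and $Y'$, and a refinement of the case analysis at the end of the proof of Lemma~\ref{l:slippery} produces a single element of $\JJ_b$ properly containing $J$, again contradicting Lemma~\ref{l:cd}(3).
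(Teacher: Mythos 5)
Your proposal diverges from the paper's argument after the two applications of Lemma~\ref{l:jzy}, and at the point of divergence it develops a genuine gap. The paper closes the argument immediately: once $cbI_a\sse Z(Y')$, the \emph{minimality} of $Z(Y')$ as an interval containing $Y'$ together with every $cbI_{a'}$ (for $I_{a'}\sse Y'$) forces the existence of an $I_{a'}\sse Y'$ whose image $cbI_{a'}$ sits strictly between $I_a$ and $cbI_a$ in the linear (or circular) order; combined with $I_a<I_{a'}$, this contradicts the fact that $cb$ is order--preserving. You never invoke minimality of $Z(Y')$ and never use order--preservation of $cb$; instead you pursue a much more elaborate geometric analysis of $\supp b$ and $\supp c$ around $J$, aiming to contradict Lemma~\ref{l:cd}(3). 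The steps you do carry out are sound: $Z(Y)\cap Z(Y')\sse\overline J$ is correct, $cbI_a\sse J$ follows, $c$ does fix $\partial Y$ and $\partial J$ by the Lemma~\ref{l:cd}(1) argument, and the deduction $I_a\subsetneq I_b$ with $I_b\cap\supp d=\varnothing$ and $q\in I_b$ is valid. The case $I_b=I_{b'}$ is also correctly dispatched.

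The problem is the case $I_b\ne I_{b'}$, which you only ``plan to handle.'' The key assertion --- that $I_b\cap J$ and $I_{b'}\cap J$ lie in a \emph{common} component $I_c\in\JJ_c$, and in fact $I_c=J$ --- is not justified and does not appear to be true in general. A component $J$ of $J(c,d)$ is a connected union of overlapping intervals from $\JJ_c$ and $\JJ_d$; nothing prevents $J$ from being a chain such as $I_c^1\cup I_d^1\cup I_c^2$ with $\inf I_c^1=q$ and $\sup I_c^2=q'$ and $I_c^1\ne I_c^2$. In that situation $I_b\cap J\sse I_c^1$ while $I_{b'}\cap J\sse I_c^2$, and your ``common $I_c$'' never materializes. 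Even if $I_c=J$ were granted, the concluding sentence --- that a ``refinement of the case analysis at the end of the proof of Lemma~\ref{l:slippery} produces a single element of $\JJ_b$ properly containing $J$'' --- is not an argument: $I_b\cup I_c\cup I_{b'}$ is not an element of $\JJ_b$, and you do not explain how a single $\supp b$ component swallowing $J$ would arise. So the proof is incomplete. The cleanest repair is to drop the whole $I_b,I_{b'}$ analysis: from $cbI_a\sse Z(Y')$, appeal to the minimality of $Z(Y')$ to produce $I_{a'}\sse Y'$ with $\sup I_a\le\inf(cbI_{a'})<\sup(cbI_{a'})\le\inf(cbI_a)$, and observe that this ordering is incompatible with $cb$ being order--preserving, exactly as the paper does.
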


Each open interval $J\sse M$ can be written as $J=(\inf J,\sup J)$ for some points $\inf J,\sup J\in M$.
\bp[Proof of Lemma \ref{l:z-disjoint}]
Suppose $Z(Y)\pitchfork Z(Y')$. 
By definition, we have $Z(Y)\cap Y'=\varnothing$ and so, $Z(Y')\not\sse Z(Y)$.
By Lemma~\ref{l:jzy}, we can choose $I_a\in\JJ_a$ with $I_a\sse Y$
such that $cbI_a\pitchfork Z(Y')$.
If $cbI_a\not\sse Z(Y')$, then Lemma~\ref{l:jzy} again implies that 
there is $I'_a\in\JJ_a$ with $I'_a\sse Y$ such that $cbI_a\pitchfork cbI'_a$.
But this is impossible since $I_a\cap I'_a=\varnothing$.

It follows that $cbI_a\sse Z(Y')$. By minimality of $Z(Y')$, we can find
$I'_a\in\JJ_a$ such that $I'_a\sse Y$ and such that, up to switching the roles of $Y$ and $Y'$, we have a (possibly circular) ordering of points as following:
\[
\inf I_a<\sup I_a\le \inf (cb I_a')<\sup (cb I_a')\le \inf (cb I_a) <\sup (cb I_a) \le \inf I'_a.\]
This inequality is absurd since $cb$ is order--preserving.
\ep

We let $g=cbab^{-1}c^{-1}$ so that $\supp g= cb \supp a$.
\begin{lem}\label{l:ga}
For each $Y\in\YY$,
we have $aY= Y, dY= Y$ and 
$gZ(Y)= Z(Y)$. Furthermore, we have
$
\supp g
\subseteq \bigcup\{Z(Y)\co Y\in \YY\}$.
\end{lem}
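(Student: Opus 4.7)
The plan is to verify each claim via the standard criterion that an orientation-preserving homeomorphism $f$ of $M$ preserves an open interval $U$ provided that every component of $\supp f$ is contained in $U$ or disjoint from $\overline{U}$; this reduces the lemma to locating components of $\supp a$, $\supp d$, and $\supp g = cb(\supp a)$ relative to the intervals $Y$ and $Z(Y)$.

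First, I would establish $aY=Y$ and $dY=Y$. By Lemma~\ref{l:cd}(2), $\supp a\cap J(c,d)=\varnothing$; since $\supp a$ is open, it is in fact disjoint from $\overline{J(c,d)}$, so each component of $\supp a$ lies in a unique element of $\YY$. For $d$, Lemma~\ref{l:cd}(1) gives the dichotomy that each $I_d\in\JJ_d$ is either contained in $J(c,d)$ (in which case, being an open subset of the open set $J(c,d)$, it avoids $\partial J(c,d)\supseteq\partial Y$ entirely) or disjoint from $J(c,d)$, hence contained in a unique element of $\YY$. In both cases each component of $\supp a$ and $\supp d$ is either contained in $Y$ or disjoint from $\overline{Y}$, yielding the claimed invariance.

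For $gZ(Y)=Z(Y)$, I would use $g=(cb)a(cb)^{-1}$, so that $\pi_0(\supp g)=\{cbI_a\co I_a\in\JJ_a\}$. For each $I_a$, let $Y'\in\YY$ be the unique component containing $I_a$; by the defining property of $Z(Y')$ we have $cbI_a\sse Z(Y')$. If $Y'=Y$ then $cbI_a\sse Z(Y)$; if $Y'\ne Y$, Lemma~\ref{l:z-disjoint} gives $Z(Y)\cap Z(Y')=\varnothing$, and since $cbI_a$ is a connected open subset of $Z(Y')$ it cannot touch $\partial Z(Y)$, so it is disjoint from $\overline{Z(Y)}$. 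The same case analysis immediately produces the final inclusion $\supp g\sse\bigcup_{Y\in\YY} Z(Y)$. The whole argument is essentially bookkeeping that ties together Lemmas~\ref{l:cd} and~\ref{l:z-disjoint}; the one point to watch is the open-closed distinction ensuring that components of the supports not only lie in the correct open pieces but also avoid the boundary points of neighboring pieces.
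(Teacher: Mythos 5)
Your proof is correct and takes essentially the same route as the paper's: Lemma~\ref{l:cd} gives that each component of $\supp a$ and $\supp d$ either lies inside $J(c,d)$ or inside a single $Y'\in\YY$, which yields $aY=Y$ and $dY=Y$; and the definition of $Z(Y')$ together with Lemma~\ref{l:z-disjoint} places each component $cbI_a$ of $\supp g$ inside exactly one $Z(Y')$, giving $gZ(Y)=Z(Y)$ and the inclusion. The only cosmetic difference is that you package the argument via an explicit ``components contained in $U$ or disjoint from $\overline U$'' invariance criterion, whereas the paper applies the same idea implicitly (and records only the one-sided containments $aY\sse Y$ and $gZ(Y)\sse Z(Y)$, leaving the symmetric $a^{-1}$, $g^{-1}$ step tacit).
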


\bp
From $\supp a\cap Y = \bigcup\{J \in\JJ_a\co J\pitchfork Y\}$, we see $aY\sse Y$.
If $I_d\in\JJ_d$ satisfies $I_d\pitchfork Y$ but $I_d\not\sse Y$,
then $I_d\pitchfork J(c,d)$, and hence by Lemma~\ref{l:cd} we have $I_d\sse J(c,d)$. This would be a contradiction, and so, we have $dY=Y$.
For the last two assertions, consider an arbitrary component $cb I_a$ of $\supp g=cb \supp a$.
If $I_a\sse Y'\in\YY$ for some $Y'\ne Y$, then $cb I_a\sse Z(Y')\in M\setminus Z(Y)$ by Lemma~\ref{l:z-disjoint}. If $I_a\sse Y$, then $cb I_a\sse Z(Y)$ by definition. This shows $gZ(Y)\sse Z(Y)$
and $\supp g\sse \bigcup\{Z(Y)\co Y\in \YY\}$.
\ep

Let $\YY_0$ be the collection of intervals $Y\in\YY$ such that
for some $I_a\in\JJ_a$ contained in $Y$
and some $I_d\in\JJ_d$, 
we have $cb I_a\cap I_d\cap (M\setminus Y)\ne \varnothing$;
in this case,
Lemma~\ref{l:good} and the fact $\supp a\cap J(c,d)=\varnothing$ together imply that $I_a\ne I_d$
and $I_d\sse J(c,d)$.

\begin{lem}\label{l:ag}
Let $g$ be as above. Suppose $Y\in \YY$ and $I_d\in\JJ_d$ satisfies $I_d\pitchfork Z(Y)$.
Assume either \be\item $I_d\sse Y$, or \item $Y\not\in \YY_0$.\ee
Then
either $g$ acts on $I_d$ as the identity or $\form{a,b,c,d}$ acts on $I_d$ as an abelian group.
\end{lem}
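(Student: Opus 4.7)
The plan is a case analysis driven by Lemma~\ref{l:good}. First I would observe that if no $I_a \in \JJ_a$ satisfies $cbI_a \pitchfork I_d$, then $I_d \cap \supp g = \varnothing$ (since $\supp g = cb\,\supp a$), and $g$ acts as the identity on $I_d$, giving one horn of the conclusion. So fix some $I_a \in \JJ_a$ with $cbI_a \pitchfork I_d$. Lemma~\ref{l:good} then offers two alternatives: either (A) $I_a = I_d = cbI_a$ and $\langle a,b,c,d\rangle$ acts on $I_d$ as an abelian group, which is directly the other horn of the conclusion; or (B) $I_a \cap I_d = \varnothing$, $(I_a,I_b,I_c,I_d)$ forms a chain, and $I_d \subseteq J(c,d)$. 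The heart of the proof is to rule out alternative (B) under either of the two hypotheses.

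Under hypothesis (1), $I_d \subseteq Y$ is disjoint from $\overline{J(c,d)}$ by definition of $\YY$, contradicting $I_d \subseteq J(c,d)$ at once.

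Under hypothesis (2) the argument is more intricate. Since $I_d \subseteq J(c,d)$, we have $I_d \cap Y = \varnothing$. By Lemma~\ref{l:cd}(2), $\supp a \subseteq \bigcup\YY$, so $I_a \subseteq Y''$ for some $Y'' \in \YY$, and Lemma~\ref{l:ga} yields $cbI_a \subseteq Z(Y'')$. When $Y'' = Y$, the assumption $Y \notin \YY_0$ forces $cbI_a \cap I_d \subseteq Y$, contradicting $I_d \cap Y = \varnothing$ together with $cbI_a \pitchfork I_d$. When $Y'' \neq Y$, Lemma~\ref{l:z-disjoint} gives $Z(Y'') \cap Z(Y) = \varnothing$; applying Lemma~\ref{l:jzy} to $I_d$ (which intersects $Z(Y)$ by hypothesis) yields either $I_d \subseteq Z(Y)$, forcing $cbI_a \cap I_d \subseteq Z(Y'') \cap Z(Y) = \varnothing$ and contradicting $cbI_a \pitchfork I_d$; or the existence of some $I_a' \subseteq Y$ with $cbI_a' \pitchfork I_d$, which reduces us back to the $Y'' = Y$ subcase applied to $I_a'$, yielding the same contradiction via $Y \notin \YY_0$.

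The main obstacle is this last subcase $Y'' \neq Y$ under hypothesis (2): the key maneuver is to use Lemma~\ref{l:jzy} to convert the geometric fact that $I_d$ protrudes into $Z(Y)$ into an \emph{internal} witness $I_a' \subseteq Y$ whose image under $cb$ intersects $I_d$, thereby activating the definition of $\YY_0$ to produce the contradiction. Once this bridge is in place, the rest of the case analysis is purely combinatorial bookkeeping using the disjointness properties of the $Z(Y)$'s.
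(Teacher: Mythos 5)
Your proof is correct and follows essentially the same strategy as the paper's: reduce to the case $cbI_a \pitchfork I_d$, invoke Lemma~\ref{l:good}, and rule out its second alternative using the structure of $\YY$, the disjointness of the $Z(Y)$'s, Lemma~\ref{l:jzy}, and the definition of $\YY_0$. The only organizational difference is minor: the paper first normalizes $I_a$ so that $I_a \subseteq Y$ (using the two subcases $I_d \subseteq Z(Y)$ vs.\ $I_d \not\subseteq Z(Y)$) before applying Lemma~\ref{l:good} with that witness, whereas you apply Lemma~\ref{l:good} to the original $I_a$ first, derive $I_d \subseteq J(c,d)$, and then case-split on which $Y'' \in \YY$ contains $I_a$; both routes land on the same contradiction with $Y \notin \YY_0$.
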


\bp
Suppose $g$ is not the identity on $I_d$, so that $cb \supp a\pitchfork I_d$.
We can find some $I_a\in\JJ_a$ such that $cb I_a\pitchfork I_d$.

(1) Suppose $I_d\sse Y$.
Since $I_d\sse M\setminus J(c,d)$, 
the case (2) of Lemma~\ref{l:good} does not occur.
It follows that $I_a=I_d=cbI_a$ and that $\form{a,b,c,d}$ is abelian on $I_d$.

(2) Suppose $Y\not\in\YY_0$. If $I_d\sse Z(Y)$, then we have $I_a\sse Y$ by the definition of $Z(Y)$
and Lemma~\ref{l:z-disjoint}.
If $I_d\not\sse Z(Y)$, then Lemma~\ref{l:jzy} implies that we can  find $I'_a\in\JJ_a$
such that $I'_a\sse Y$ and $cb I'_a\pitchfork I_d$. So we may simply assume $I_a\sse Y$.
If $I_d\ne cb I_a$, then Lemma~\ref{l:good} implies that $I_d\sse J(c,d)$ and $Y\in\YY_0$, and this would violate the assumption. So, we have $I_d = cb I_a$. 
By Lemma~\ref{l:good}, the group
$\form{a,b,c,d}$ acts on $I_d$ as an abelian group.
\ep

\begin{lem}\label{l:ysinf}
$\YY_0$ is an infinite collection of intervals.
\end{lem}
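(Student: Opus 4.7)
The plan is to argue by contradiction: assuming $\YY_0$ is finite, I will produce a nontrivial element of $A(P_4)$ that $\phi$ maps to the identity. Write $\YY_0=\{Y_1,\dots,Y_m\}$ and enumerate $\YY=\{Y_j\}_{j\geq 1}$ so that the first $m$ intervals coincide with $\YY_0$; set $Z_j=Z(Y_j)$. Lemma~\ref{l:z-disjoint} gives pairwise disjoint $Z_j$'s with $Y_j\subseteq Z_j$, while Lemma~\ref{l:cd}(2) and Lemma~\ref{l:ga} give $\supp a\subseteq\bigcup_j Y_j$ and $\supp g\subseteq\bigcup_j Z_j$, where $g=cbab^{-1}c^{-1}$. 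Apply Lemma~\ref{l:fin} with $f=a$, $h=g$, replacing $m$ by $\max(m,1)$ so that the resulting successive conjugation is nontrivial, to obtain $w=w(a,g)\in\form{a,g}$ such that, setting $a':=waw^{-1}$,
\[\supp a'\subseteq\bigcup_{j\leq m}Y_j\cup\bigcup_{j>m}Z_j.\]

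The central task is to show $[a',d]=1$ in $\Diffb(M)$. Fix $I_d\in\JJ_d$. By Lemma~\ref{l:cd}(1), either $I_d\subseteq Y_i$ for some $i$ or $I_d\subseteq J(c,d)$. In the first case Lemma~\ref{l:ag}(1) applies. In the second, if $I_d$ meets $Z_j$ only for $j\leq m$ then $\supp a'\cap I_d\subseteq\bigcup_{j\leq m}(Y_j\cap I_d)=\varnothing$ (since $Y_j\cap J(c,d)=\varnothing$), so $a'|_{I_d}=\id$; otherwise $I_d\pitchfork Z_j$ for some $j>m$ and Lemma~\ref{l:ag}(2) applies. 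In each non-trivial case we conclude that either $g|_{I_d}=\id$ or $\form{a,b,c,d}|_{I_d}$ is abelian. The abelian alternative gives pointwise commutation of $a'$ and $d$ on $I_d$ immediately. If instead $g|_{I_d}=\id$, the hypothesis forces $g$ to fix $I_d$ setwise and, via Lemma~\ref{l:disjoint} applied to the commuting pair $\{a,d\}$, forces $a$ to preserve $I_d$ setwise as well; a length induction on the successive conjugation $w$ then shows $w|_{I_d}$ is a power of $a|_{I_d}$, so $a'|_{I_d}=a|_{I_d}$, which commutes with $d|_{I_d}$ by $[a,d]=1$. Pointwise commutation on $\supp d$ upgrades to $[a',d]=1$ on $M$ once we check that $a'$ preserves $\supp d$ setwise, which follows from the same case analysis.

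Finally, in $A(P_4)$ the centralizer of $a$ is $\form{a,c,d}\cong\Z\times F_2$, since $\lk(a)=\{c,d\}$ is a non-edge. So $[a',d]=1$ in $A(P_4)$ would require $w^{-1}dw\in\form{a,c,d}$. But $w$ is a nontrivial successive conjugation of $a$ by $g=cbab^{-1}c^{-1}$, so $w^{-1}dw$ irreducibly contains the generator $b$ and cannot belong to $\form{a,c,d}$. Hence $[a',d]\neq 1$ in $A(P_4)$, contradicting faithfulness of $\phi$. The main obstacle I foresee is the global commutativity upgrade in the second paragraph: verifying that $a'$ preserves $\supp d$ setwise and that the inductive identity $a'|_{I_d}=a|_{I_d}$ genuinely holds requires careful tracking of how $a$ acts on individual components of $\supp d$, using that $a$ permutes $\JJ_d$ only by respecting the component structure dictated by $[a,d]=1$.
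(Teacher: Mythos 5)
Your proposal follows the same strategy as the paper's proof: enumerate $\YY$ so that $\YY_0=\{Y_1,\dots,Y_m\}$, invoke Lemma~\ref{l:fin} with $f=a$, $g$ as given to obtain the successive conjugation $w$, use the case analysis from Lemma~\ref{l:ag} to show $[waw^{-1},d]=1$ as a diffeomorphism, and contradict faithfulness. The only genuine variations are organizational: you index the case analysis by components $I_d\in\JJ_d$ rather than by components $wI_a$ of $\supp(waw^{-1})$ as the paper does, you carry out the ``pointwise to global'' upgrade by hand rather than packaging it through Lemma~\ref{l:jfg}, and your final contradiction appeals to the centralizer $Z(a)=\form{a,c,d}$ in $A(P_4)$ and a normal-form argument whereas the paper rewrites $[waw^{-1},d]$ as $[cuc^{-1},d]$ and observes directly that this is a nontrivial reduced word. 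These are presentational rather than substantive differences. One small point worth tightening in your write-up: in the abelian alternative you should say explicitly that the hypothesis ``$\form{a,b,c,d}$ acts abelianly on $I_d$'' includes the claim that $\form{a,b,c,d}$ preserves $I_d$, so that $a'\in\form{a,b,c}$ automatically preserves $I_d$; this is what makes the restriction $a'|_{I_d}$ well-defined and justifies the immediate commutation. Likewise, the claim that $w^{-1}dw$ ``irreducibly contains $b$'' and hence lies outside the parabolic $\form{a,c,d}$ deserves at least a sentence appealing to normal forms in right-angled Artin groups (or to the fact that parabolic subgroups are convex), though the paper's own version of this step is equally terse.
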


\bp
Suppose $\YY=\{Y_1,Y_2,\ldots,Y_m,\ldots\}$ such that
$\YY_0=\{Y_1,Y_2,\ldots,Y_m\}$.
By Lemma~\ref{l:fin}, there is a successive conjugation $w(x,y)\in F(x,y)$ of $x$ by $y$ such that
$w=w(a,g)$ acts on $Y_j$ for $j\le m$ and on $Z(Y_j)$ for $j>m$.
Moreover for $waw^{-1}$, we can require that
 $\supp waw^{-1}\subseteq \bigcup_{j\le m}Y_j \cup\bigcup_{j>m} Z(Y_j)$.

\begin{claim*}
If some $wI_a\pitchfork \supp d$ for some $I_a\in\JJ_a$,
then $wI_a\in\JJ_d$ and
$\form{a,b,c,d}$ 
acts on $wI_a$ as an abelian group.
\end{claim*}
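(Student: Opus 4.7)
The plan is to show that if $wI_a \pitchfork I_d$ for some $I_d \in \JJ_d$, then $wI_a = I_d$ as subsets of $M$, whence $wI_a \in \JJ_d$ and the abelian statement will follow from Lemma~\ref{l:ad}. First I would locate $wI_a$: since it is a component of $\supp(waw^{-1}) = w \supp a$ and Lemma~\ref{l:fin} tells us that $\supp(waw^{-1}) \subseteq \bigcup_{j \le m} Y_j \cup \bigcup_{j > m} Z(Y_j)$, there is an index $j$ with $wI_a \subseteq Z(Y_j)$, and in fact $wI_a \subseteq Y_j$ when $j \le m$. The hypothesis $wI_a \pitchfork I_d$ then forces $I_d \pitchfork Z(Y_j)$. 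Using Lemma~\ref{l:cd}(1) together with the disjointness of $\YY$ when $j \le m$, and Lemma~\ref{l:jzy} combined with Lemma~\ref{l:good} exactly as in the proof of Lemma~\ref{l:ag}(2) when $j > m$ (where $Y_j \notin \YY_0$), I would conclude $I_d \subseteq Z(Y_j)$, with the stronger $I_d \subseteq Y_j$ in the first case. Either way the hypothesis of Lemma~\ref{l:ag} is satisfied, so either $g$ acts as the identity on $I_d$, or $\form{a,b,c,d}$ acts on $I_d$ as an abelian group.

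The main step, which I expect to be the hardest part, is to show $I_d \in \JJ_a$ in either alternative. Here I would rely on the elementary inclusion $\supp(waw^{-1}) \subseteq \supp a \cup \supp g$, which holds because any point fixed by both $a$ and $g$ is fixed by every word in those two generators. Combined with $\varnothing \ne wI_a \cap I_d$, this gives $(\supp a \cup \supp g) \cap I_d \ne \varnothing$. Suppose toward a contradiction that $I_d \notin \JJ_a$: the Disjointness Condition applied to the globally commuting pair $a, d$ then yields $\supp a \cap I_d = \varnothing$, forcing $\supp g \cap I_d \ne \varnothing$; this is already a contradiction if $g|_{I_d} = \id$, so we may assume $\form{a,b,c,d}$ acts on $I_d$ as an abelian group and obtain $cb I_a' \pitchfork I_d$ for some $I_a' \in \JJ_a$. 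Lemma~\ref{l:good} then forces either $I_a' = I_d$ (contradicting $I_d \notin \JJ_a$) or $I_d \subseteq J(c,d)$; the latter is incompatible with the location of $I_d$ established in the previous step, since in the $j \le m$ case $I_d \subseteq Y_j \subseteq M \setminus \overline{J(c,d)}$, and in the $j > m$ case a short argument using Lemma~\ref{l:z-disjoint} localizes $I_a' \subseteq Y_j$, whereupon the defining property of $\YY_0$ produces a contradiction.

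With $I_d \in \JJ_a \cap \JJ_d$ in hand, Lemma~\ref{l:ad} gives that $\form{a,b,c,d}$ acts on $I_d$ as an abelian group, and its proof (via the Disjointness Condition) shows that each of $a, b, c, d$ preserves $I_d$ setwise. Therefore $g = cbab^{-1}c^{-1}$ preserves $I_d$, and since $w$ is a word in $a$ and $g$ we obtain $wI_d = I_d$. The components $wI_d = I_d$ and $wI_a$ of $w \supp a$ share the nonempty intersection $wI_a \cap I_d$, so they coincide; hence $I_a = I_d$, $wI_a = I_d \in \JJ_d$, and the abelian action on $wI_a$ is the one already obtained on $I_d$, completing the proof of the claim.
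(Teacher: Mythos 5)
Your argument is correct, and it closely parallels the paper's proof in its use of Lemma~\ref{l:ag}, Lemma~\ref{l:good}, Lemma~\ref{l:ad}, Lemma~\ref{l:z-disjoint}, and the Disjointness Condition, but the organization is genuinely a bit different. The paper splits by the two alternatives of Lemma~\ref{l:ag} (for $j\le m$) and by an identity $I_a\cap\form{a,g}I_d=I_a\cap I_d$ (for $j>m$), in each case showing directly that $I_a\in\JJ_d$ before invoking Lemma~\ref{l:ad}; the steps ``$g$ identity on $I_d$ $\Rightarrow$ $wI_d=I_d$'' and ``$I_a\cap\form{a,g}I_d=I_a\cap I_d$'' are stated with little justification and in fact require precisely the kind of unpacking you supply. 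Your route makes a key observation explicit: $\supp(waw^{-1})\subseteq \supp a\cup\supp g$, since $waw^{-1}$ is a word in $a,g$. This immediately gives $(\supp a\cup\supp g)\pitchfork I_d$, and then a uniform contradiction argument (assume $I_d\notin\JJ_a$, get $\supp g\pitchfork I_d$, apply Lemma~\ref{l:good}, rule out $I_d\subseteq J(c,d)$ by localizing $I_d$ in $Y_j$ or $Z(Y_j)$) handles both the $g|_{I_d}=\id$ and abelian cases at once, establishing $I_d\in\JJ_a$ rather than $I_a\in\JJ_d$; the endgame via Lemma~\ref{l:ad}, $w$-invariance of $I_d$, and matching of components is then the same as in the paper. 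The price you pay is that your intermediate claim $I_d\subseteq Z(Y_j)$ for $j>m$ needs its own small argument (Lemma~\ref{l:jzy}, then Lemma~\ref{l:good}, then $Y_j\notin\YY_0$ to rule out $I_d\subseteq J(c,d)$), which you sketch correctly. Net effect: your proof is somewhat longer but makes explicit the containment $\supp(waw^{-1})\subseteq\supp a\cup\supp g$ that the paper's terser argument seems to rely on implicitly.
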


Assume $wI_a\pitchfork I_d$ for some $I_d\in\JJ_d$.
Let us first consider the case $I_a\sse Y_j$ for some $j\le m$. 
We have $wI_a\sse Y_j$ and $I_d\pitchfork Y_j$.
By Lemma~\ref{l:cd} (1), we see that $I_d\sse Y_j$.
Then Lemma~\ref{l:ag} (1) implies that $g$ is the identity on $I_d$ or $\form{a,b,c,d}$ is abelian on $I_d$.
If $g$ is the identity on $I_d$, then $wI_d=I_d$ and hence, $I_d=I_a=wI_a$ and
 $\form{a,b,c,d}$ is abelian on $I_d=wI_a$ 
 by Lemma~\ref{l:ad}.
If $\form{a,b,c,d}$ is abelian on $I_d$, then 
we also have $wI_d=I_d$ and so, $I_d=I_a=wI_a$.

Let us now suppose $I_a\sse Y_j$ for some $j>m$, so that $I_d\pitchfork Z(Y_j)$. 
By Lemma~\ref{l:ag} (2), we see that $I_a\cap \form{a,g}I_d=I_a\cap I_d$.
If $I_a\cap \supp d = \varnothing$, then $I_a\cap I_d=\varnothing$
and $wI_a\cap I_d=w(I_d\cap w^{-1}I_d)=\varnothing$.
So $I_a\in\JJ_d$, and Lemma~\ref{l:ad} implies that $I_a=wI_a=I_d$ and that $\form{a,b,c,d}$ acts on $I_a$ as an abelian group. 

From the claim above and Lemma~\ref{l:jfg}, we see that \[\supp[waw^{-1},d]\sse J(waw^{-1},d)=\varnothing,\]
and so, $[waw^{-1},d]=1$.
On the other hand,
 there exists a successive conjugation $u$ of $a$ by $b$ such that
$[waw^{-1},d]=[cuc^{-1},d]$.
Since $[cuc^{-1},d]$ is a nontrivial reduced word in $A(P_4)$,
we have a contradiction.
\ep
Now Lemma~\ref{l:ysinf} completes the proof of Proposition~\ref{p:abcd}.

\section{The Two Jumps Lemma}\label{s:config-interval}
In this section we prove some quantitative estimates on first derivatives which naturally arise in our setup. We let $M=I$ or $M=S^1$.

\begin{lem}\label{l:fix-accum}
If $f\co M\to M$ is a $C^1$ map
and $x$ is an accumulation point of $\Fix f$,
then $f'(x)=1$.
\end{lem}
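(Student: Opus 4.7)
The plan is a direct application of the definition of the derivative. Since $\Fix f$ is closed (as the preimage of the diagonal under the continuous map $y\mapsto (y,f(y))$), the accumulation point $x$ lies in $\Fix f$, so $f(x)=x$. Choose a sequence $(x_n)\subseteq \Fix f$ with $x_n\ne x$ and $x_n\to x$; since $M$ is a one--manifold we may work in a chart around $x$ and treat everything as real numbers.

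Then by $C^1$ differentiability,
\[
f'(x) \;=\; \lim_{n\to\infty}\frac{f(x_n)-f(x)}{x_n-x} \;=\; \lim_{n\to\infty}\frac{x_n-x}{x_n-x} \;=\; 1,
\]
where the middle equality uses $f(x_n)=x_n$ and $f(x)=x$.

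There is essentially no obstacle; the only thing to be careful about is the case $M=S^1$, where one should pass to a local lift near $x$ so that subtraction makes sense, but on small enough scales this is routine and the computation is identical.
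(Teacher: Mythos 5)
Your proof is correct, and it takes a genuinely more elementary route than the paper's. The paper applies the Mean Value Theorem to the intervals $[x_n,x]$ (or $[x,x_n]$) to produce intermediate points $y_n$ with $f'(y_n)=1$, and then invokes continuity of $f'$ — the full $C^1$ hypothesis — to conclude $f'(x)=\lim f'(y_n)=1$. You instead note that since $f$ is differentiable at $x$, the limit defining $f'(x)$ exists and therefore may be evaluated along the particular sequence $(x_n)$, where every difference quotient is identically $1$. This dispenses with the MVT entirely and only uses differentiability at the single point $x$, not continuity of the derivative; it thus proves a slightly stronger statement. Your remark that $x\in\Fix f$ (because $\Fix f$ is closed) is a detail the paper leaves implicit, and your comment about passing to a local lift for $M=S^1$ applies equally to both arguments.
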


\bp
Choose a sequence $(x_n)_{n\ge1}$ converging to $x$ 
such that $x_n\in\Fix f$ and $x_n\ne x$ for each $n$.
Then there exists $y_n$ between $x_n$ and $x$ satisfying 
$f'(y_n)=(f(x_n)-f(x))/(x_n-x)=1$.
We have $1=\lim_{n\to\infty} f'(y_n)=f'(\lim_{n\to\infty} y_n)=f'(x)$.
\ep

The length of an interval $J$ is denoted by $|J|$.
\begin{lem}[Two Jumps Lemma]\label{l:two-jumps}
Let $f,g\co M\to M$ be continuous maps
and $(y_j)_{j\ge 1}$ be an infinite sequence of points in $M$.
For each $j\ge1$, suppose $I_j$ is a closed interval bounded by $f(y_j)$ and $g(y_j)$ such that $y_j$ belongs to the interior of $I_j$
and such that $\lim_{j\to\infty} |I_j|=0$.
For each $j\ge1$,
let $A_j$ and $B_j$ be the closed intervals determined by the following conditions:
\[
I_j=A_j\cup B_j,\  A_j\cap B_j=y_j,\  f(y_j)\in A_j,\  g(y_j)\in B_j.\]
If
$A_j\pitchfork \Fix g$ and $B_j\pitchfork \Fix f$ for each $j\ge 1$, then $f$ or $g$ fails to be $C^1$.
\end{lem}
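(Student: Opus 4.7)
The plan is to argue by contradiction: assume both $f$ and $g$ are $C^1$ and derive an inconsistency. First, using compactness of $M$, I would pass to a subsequence so that $y_j \to y^\ast$ for some $y^\ast\in M$. Since $|I_j|\to 0$ and $\{f(y_j),g(y_j)\}\subseteq I_j$, both sequences converge to $y^\ast$, and continuity gives $f(y^\ast)=g(y^\ast)=y^\ast$. The hypotheses further supply points $p_j\in A_j\cap\Fix g$ and $q_j\in B_j\cap\Fix f$; since $p_j,q_j\in I_j$ these also converge to $y^\ast$, so $y^\ast$ is an accumulation point of both $\Fix f$ and $\Fix g$. Lemma~\ref{l:fix-accum} then yields $f'(y^\ast)=g'(y^\ast)=1$.

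Next, I would localize to an arc around $y^\ast$, lifting to the universal cover when $M=S^1$, so that for $j$ large all relevant points lie in an interval of $\bR$. After possibly passing to a further subsequence, assume the order is $f(y_j)<y_j<g(y_j)$, so that $A_j=[f(y_j),y_j]$ and $B_j=[y_j,g(y_j)]$. Since $y_j$ is in the interior of $I_j$, these inequalities are strict, whence $p_j<y_j<q_j$. The decisive step is to apply the mean value theorem with a carefully chosen chord for each map, pairing each diffeomorphism with the fixed point of the \emph{other}. On $[y_j,q_j]$, using $f(q_j)=q_j$, there exists $\eta_j\in(y_j,q_j)$ with
\[
f'(\eta_j)=\frac{q_j-f(y_j)}{q_j-y_j}=1+\frac{y_j-f(y_j)}{q_j-y_j}\ \geq\ 1+\frac{|A_j|}{|B_j|},
\]
the inequality using $q_j-y_j\leq |B_j|$. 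Symmetrically, on $[p_j,y_j]$, using $g(p_j)=p_j$, there exists $\xi_j\in(p_j,y_j)$ with
\[
g'(\xi_j)=\frac{g(y_j)-p_j}{y_j-p_j}=1+\frac{g(y_j)-y_j}{y_j-p_j}\ \geq\ 1+\frac{|B_j|}{|A_j|}.
\]

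Because $\eta_j,\xi_j\to y^\ast$ and $f',g'$ are continuous at $y^\ast$ with value $1$, passing to the limit forces $|A_j|/|B_j|\to 0$ and $|B_j|/|A_j|\to 0$ simultaneously; this is impossible, since the two ratios are mutual reciprocals. The main obstacle is really the identification of the right pair of chords: the naive choice of applying the mean value theorem to $f$ on $[p_j,y_j]$ or to $g$ on $[y_j,q_j]$ produces no useful estimate. The point is that the hypothesis guarantees that $g$ has a fixed point in the interval where $f$ dragged $y_j$, and vice versa, and this asymmetric interleaving is exactly what yields a pair of lower bounds that must jointly tend to $1$ but are forced by algebra to do the opposite.
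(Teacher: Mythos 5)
Your proof is correct and follows essentially the same approach as the paper: pass to a convergent subsequence, use Lemma~\ref{l:fix-accum} to get derivative $1$ at the limit point, then apply the mean value theorem to $f$ on the interval to the fixed point $q_j$ and to $g$ on the interval from $p_j$, yielding the same pair of lower bounds $1+|A_j|/|B_j|$ and $1+|B_j|/|A_j|$. The only cosmetic difference is in the final contradiction: the paper multiplies the two bounds to obtain $f'(v_j)g'(u_j)\ge 4$ against a limit of $1$, whereas you argue each ratio must tend to $0$ and observe they are reciprocals; both are valid.
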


Figure~\ref{f:fg} (a) illustrates an example of the hypotheses in Lemma~\ref{l:two-jumps}.

\bp[Proof of Lemma \ref{l:two-jumps}]
Suppose $f$ and $g$ are $C^1$,
and choose $s_j\in A_j\cap \Fix g$ and $t_j\in B_j\cap\Fix f$ for each $j\ge1$.
We have a configuration of points as shown in Figure~\ref{f:fg} (a), 
up to choosing a subsequence and reversing the orientation of $M$. So,
 \[f(y_j)\le s_j  < y_j < t_j \le g(y_j).\]
By further passing to a subsequence, we can also assume that $\lim_{j\to\infty}y_j=y$ for some $y\in M$
so that $f(y)=y=g(y)$. Since $(s_j)$ and $(t_j)$ both accumulate at $y$, 
 we see  $f'(y)=1=g'(y)$
from Lemma~\ref{l:fix-accum}.

Using the Mean Value Theorem, we can find
$u_j\in(s_j,y_j)$ and $v_j\in(y_j,t_j)$ 
satisfying
\begin{align*}
g'(u_j) &= \frac{g(y_j)- s_j}{y_j-s_j} = 1+\frac{|B_j|}{y_j-s_j}
\ge 1+\frac{|B_j|}{|A_j|} \\
f'(v_j) &= \frac{t_j- f(y_j)}{t_j-y_j} = 1+\frac{|A_j|}{t_j-y_j}
\ge 1+\frac{|A_j|}{|B_j|}
\end{align*}
and hence, $f'(v_j)g'(u_j)\ge 4$.
We have a contradiction, since $(u_j)$ and $(v_j)$ both converge to $y$.
\ep

\begin{figure}[h!]
  \tikzstyle {a}=[black,postaction=decorate,decoration={%
    markings,%
    mark=at position 1 with {\arrow[black]{stealth};}    }]
  \tikzstyle {bv}=[black,draw,shape=circle,fill=black,inner sep=1.5pt]
\subfloat[(a)]{
\begin{tikzpicture}[thick,scale=.5]
\draw (-4,0) node [bv] {} node [below]  {\small $f(y_j)$} 
-- (-2,0)  node [bv] {} node [below] {\small $s_j$}
-- (0,0) node [bv] {} node [below] {\small $y_j$} 
-- (2,0)  node [bv] {} node [below] {\small $t_j$}
-- (4,0) node [bv] {} node [below]  {\small $g(y_j)$};
\draw [] (-4,0) -- (-4,1);
\draw [] (4,0) -- (4,1);
\draw [] (0,0) -- (0,1);
\draw (-2,.5) node [above]  {\small $A_j$};
\draw (2,.5) node [above]  {\small $B_j$};
\draw [a,white] (2,.5) -- (4,.5);
\draw [a,white] (2,.5) -- (0,.5);
\draw [dashed] (0,.5) -- (4,.5);
\draw [a,white] (-2,.5) -- (-4,.5);
\draw [a,white] (-2,.5) -- (0,.5);
\draw [dashed] (0,.5) -- (-4,.5);
\end{tikzpicture}%
}\qquad
\subfloat[(b)]{
\begin{tikzpicture}[thick,scale=.7]
\draw (-4,0) -- (2,0);
\draw (-3.8,0) node [above] {\small $I_f^j$}; \draw (-2,.5) -- (4.5,.5);
\draw (4.3,.5) node [above] {\small $I_g^j$}; 
\draw  (-3,0) node [bv] {}; 
\draw  (0,0)  node [bv] {};
\draw [dashed] (-2,.5) -- (-2,-.3);
\draw (-2,-.1) node [below]  {\small $s^j$};
\draw [dashed] (2,.5) -- (2,-.3);
\draw (2,-.1) node [below] {\small $t^j$};
\draw (-3,0)   node [below]  {\small $x^j$};
\draw  (3.5,.5)  node [bv] {}  node [below]  {\small $gf(x^j)$};
\draw (0,0)   node [below]  {\small $f(x^j)$};
\end{tikzpicture}%
}
\caption{Lemmas~\ref{l:two-jumps} and~\ref{l:fg}.}
\label{f:fg}
\end{figure}

\begin{lem}\label{l:fg}
Let $f,g$ be orientation--preserving homeomorphisms of $M$.
For each $v\in\{f,g\}$, 
suppose we have an infinite collection of disjoint open intervals
$\{I_v^j\co j\ge 1\}$ such that $v(I_v^j)=I_v^j$ for $j\ge 1$.
Suppose furthermore that for each $j\ge 1$,
there exists some $x^j\in I_f^j\setminus I_g^j$ with the property that
$f(x^j)\in I_g^j$ and $gf(x^j)\not\in I_f^j$.
Then one of $f$ or $g$ fails to be a $C^1$ diffeomorphism.
\end{lem}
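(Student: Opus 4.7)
The plan is to apply the Two Jumps Lemma (Lemma~\ref{l:two-jumps}) to the pair of maps $(f^{-1}, g)$ at the test points $y_j = f(x^j)$. The intuition is that the three points $x^j$, $f(x^j)$, $gf(x^j)$ lie in strict order across the shrinking overlap $I_f^j \cap I_g^j$, so $f^{-1}$ and $g$ push $y_j$ to opposite sides, each jumping over a fixed point of the other map.

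First I would unpack the geometry of the overlap. Since $f$ preserves $I_f^j$, the point $f(x^j)$ lies in $I_f^j$, and by hypothesis it also lies in $I_g^j$; this, together with $x^j \in I_f^j \setminus I_g^j$ and $gf(x^j) \in I_g^j \setminus I_f^j$, forces $I_f^j$ and $I_g^j$ to overlap while neither contains the other. Writing $I_f^j = (p_f^j, q_f^j)$ and $I_g^j = (p_g^j, q_g^j)$, after passing to a subsequence we may assume the same overlap type occurs for every $j$; I will work in the case $p_f^j < p_g^j < q_f^j < q_g^j$, the other configuration being entirely analogous. Set $s^j = p_g^j$ and $t^j = q_f^j$. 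The endpoints of the $v$-invariant open interval $I_v^j$ are fixed by the orientation-preserving homeomorphism $v$, so $s^j \in \Fix g$ and $t^j \in \Fix f$. Direct inspection of the four intervals $I_f^j \setminus I_g^j$, $I_f^j \cap I_g^j$, $I_g^j \setminus I_f^j$ then gives
\[
x^j \le s^j < f(x^j) < t^j \le gf(x^j).
\]

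Next I would verify the shrinking condition. Each of $\{I_f^j\}$ and $\{I_g^j\}$ consists of pairwise disjoint open intervals in the compact manifold $M$, so $|I_f^j| \to 0$ and $|I_g^j| \to 0$. Each of the consecutive differences $s^j - x^j$, $f(x^j) - s^j$, $t^j - f(x^j)$, and $gf(x^j) - t^j$ is bounded above by $|I_f^j|$ or $|I_g^j|$, so the length of $[x^j, gf(x^j)]$ tends to zero.

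Finally I would invoke Lemma~\ref{l:two-jumps} applied to the pair $(f^{-1}, g)$ with $y_j = f(x^j)$. The closed interval $I_j$ bounded by $f^{-1}(y_j) = x^j$ and $g(y_j) = gf(x^j)$ contains $y_j$ strictly in its interior, has length tending to zero, and splits at $y_j$ as $A_j = [x^j, f(x^j)]$, which meets $\Fix g$ at $s^j$, and $B_j = [f(x^j), gf(x^j)]$, which meets $\Fix f = \Fix f^{-1}$ at $t^j$. Lemma~\ref{l:two-jumps} therefore yields that $f^{-1}$ or $g$ fails to be $C^1$, and since $f$ is a $C^1$ diffeomorphism if and only if $f^{-1}$ is, one of $f$ or $g$ is not a $C^1$ diffeomorphism. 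The main subtlety — and the reason one must feed $(f^{-1}, g)$ rather than $(f, g)$ into Two Jumps — is that $f$ itself pushes $y_j$ to the same side as $g(y_j)$, so $y_j$ would not be interior to the interval between $f(y_j)$ and $g(y_j)$; passing to $f^{-1}$ is precisely what reverses that direction and makes the hypothesis of Lemma~\ref{l:two-jumps} hold.
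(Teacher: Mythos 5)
Your proof is correct and takes exactly the same approach as the paper, which reduces the statement to Lemma~\ref{l:two-jumps} applied to $f^{-1}$ and $g$ along the sequence $\left(f(x^j)\right)_{j\ge1}$. You have carefully unpacked the one-line argument in the paper: the ordering $x^j \le s^j < f(x^j) < t^j \le gf(x^j)$, the shrinking of the intervals because the families are disjoint in a compact manifold, and the verification of each hypothesis of the Two Jumps Lemma.
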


\bp
Immediate by applying Lemma~\ref{l:two-jumps} to $f^{-1}$ and $g$ on 
the sequence $\left(f(x^j)\right)_{j\ge1}$. See Figure~\ref{f:fg} (b).
\ep

\section{Proof of the main theorem}
\bp[Proof of Theorem~\ref{thm:p4}]
Suppose $\phi\co A(P_4)\to \Diffb(M)$ is an injective homomorphism,
and we let the vertex set $V=V(P_4)=\{a,b,c,d\}$ as shown in Figure~\ref{f:p4}. 
By raising the generators to suitable powers, we may assume that $\phi(A(P_4))$ preserves each component of $M$.
From Lemma \ref{lem:connected}, we may further assume that $M$ is connected.
By raising to higher powers if necessary, we can also require that
$\phi(v)$ is grounded for each $v$; see Lemma \ref{l:fix2}.
By Proposition \ref{p:abcd} and Lemma~\ref{l:good},
we have an infinite family of distinct intervals $\{I_v^j\co j\ge1\}\sse \pi_0(\supp v)$ for each $v$
such that the hypotheses of Lemma~\ref{l:fg} are satisfied after setting: \[f=b,g=c.\]
We have a contradiction by Lemma~\ref{l:fg}.
\ep

\section{Quasi--isometric rigidity}\label{s:qi}
In this section, we prove the following generalization of Theorem \ref{thm:modc2}:

\begin{thm}\label{thm:qic2}
Let $G$ be a finitely generated group which is quasi--isometric to the mapping class group of a finite type surface with complexity at least two.
Then there is no injective homomorphism $G\to\Diffb(M)$.
\end{thm}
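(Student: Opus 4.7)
The plan is to reduce to Theorem~\ref{thm:p4} by invoking the quasi--isometric rigidity of mapping class groups. The key input is the theorem of Behrstock--Kleiner--Minsky--Mosher and Hamenst\"adt: if $G$ is a finitely generated group quasi--isometric to $\Mod(S)$ with $c(S)\ge 2$, then $G$ is abstractly commensurable with $\Mod(S)$. More concretely, this yields a finite index subgroup $G_0\le G$ and a homomorphism $\psi\co G_0\to \Mod(S)$ with finite kernel whose image $\psi(G_0)$ has finite index in $\Mod(S)$.

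With this reduction in hand, I would argue as follows. Since $\psi(G_0)$ is of finite index in $\Mod(S)$ and $c(S)\ge 2$, Corollary~\ref{cor:subgroup} combined with Lemma~\ref{lem:subgroup} produces an embedding $A(P_4)\inj \psi(G_0)$. Pulling back, the preimage $H=\psi^{-1}(A(P_4))\le G_0$ fits in a short exact sequence
\[
1\to K\to H\to A(P_4)\to 1
\]
with $K=\ker\psi$ finite. Since $A(P_4)$ is residually finite and $K$ is finite, the extension $H$ is also residually finite, so there is a finite index normal subgroup $N\le H$ with $N\cap K=\{1\}$. Then $N$ injects into $A(P_4)$ as a finite index subgroup, and Lemma~\ref{lem:subgroup} supplies an embedding $A(P_4)\inj N\le G_0\le G$.

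Now suppose, for contradiction, that there were an injective homomorphism $G\to\Diffb(M)$. Restricting to the subgroup produced above yields an injective homomorphism $A(P_4)\to \Diffb(M)$, contradicting Theorem~\ref{thm:p4}. This contradiction completes the argument.

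The main obstacle, and the only nontrivial external ingredient, is the invocation of quasi--isometric rigidity for $\Mod(S)$; once that is in place, the remaining steps are a routine lift along a finite kernel together with the results already established in the paper. One should also verify that the hypothesis $c(S)\ge 2$ is sufficient for both the QI rigidity theorem and for the availability of an $A(P_4)$--subgroup, which matches the complexity threshold appearing in Theorem~\ref{thm:modc2}.
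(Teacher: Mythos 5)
The main step in your argument has a genuine gap: the claim that ``since $A(P_4)$ is residually finite and $K$ is finite, the extension $H$ is also residually finite'' is false as a general principle. Residual finiteness of the quotient does \emph{not} pass to extensions by finite groups. The standard counterexample is Deligne's: the group $\operatorname{Sp}(2n,\bZ)$ for $n\ge 2$ is residually finite (indeed linear over $\bZ$), yet its nontrivial central extension by $\bZ/2$ is not residually finite. So you cannot conclude the existence of a finite index normal subgroup $N\le H$ with $N\cap K=\{1\}$ from residual finiteness of $A(P_4)$ alone. (One could try to salvage this by invoking Serre's goodness of RAAGs, but that is a substantial theorem that you neither cite nor prove, and it is certainly not what ``residually finite quotient plus finite kernel'' gives you.) There is also a smaller imprecision earlier: the QI rigidity theorem as stated in the paper produces a homomorphism to $\Mod(S)/Z(\Mod(S))$, not to $\Mod(S)$ itself; this matters because one cannot in general lift along the projection $\Mod(S)\to\Mod(S)/Z(S)$, and the paper has to handle this explicitly using the facts that $Z(S)$ is finite and $A(P_4)$ is torsion--free.

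The paper's route around the extension $1\to K\to H\to A(P_4)\to 1$ is quite different and does crucially use the hypothesis that $H$ maps injectively into $\Diffb(M)$ --- a hypothesis your argument never touches at this stage. Namely: a finite group acting faithfully by orientation--preserving homeomorphisms on a compact connected one--manifold acts freely off the boundary, so H\"older's Theorem forces $K$ to be abelian; then, since $\Aut(K)$ is finite, after raising generators to suitable powers one may assume $K$ is central; finally, Lemma~\ref{l:nilp} gives an explicit splitting of any central extension of $A(P_4)$ by a finite group (by raising the four lifted generators to the exponent of $K$, using that a commutator which is central obeys $[x^m,y^n]=[x,y]^{mn}$), producing an embedded copy of $A(P_4)$ inside $H$. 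This is an elementary, self--contained argument that sidesteps any question about residual finiteness of the extension, and it is where the structure of $A(P_4)$ and of $\Diffb(M)$ actually enter. Your proposal would need to replace the residual finiteness claim with something like this chain of reductions to be correct.
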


Recall that a \emph{finite type surface} is a closed surface minus finitely many (possibly zero) punctures. 
Let us denote the center $Z(S)=Z(\Mod(S))$.
The proof of Theorem \ref{thm:qic2} relies on the following result:
\begin{thm}[QI rigidity of mapping class groups~\cite{BKMM2012},~\cite{Hamenstadt2005}]
Let $S$ be a finite type surface with $c(S)\geq 2$, and let $G$ be a finitely generated group which is quasi--isometric to $\Mod(S)$. Then after passing to a finite index subgroup of $G$ if necessary, there is a homomorphism $G\to\Mod(S)/Z(S)$ with finite index image and finite kernel.
\end{thm}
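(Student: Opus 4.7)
The plan is to assume for contradiction that $\phi\co G\to\Diffb(M)$ is an injective homomorphism and to produce a copy of $A(P_4)$ inside $G$; then $\phi$ would restrict to an embedding of $A(P_4)$ into $\Diffb(M)$, contradicting Theorem~\ref{thm:p4}. The QI rigidity theorem stated above provides a finite index subgroup $G_0\le G$ and a homomorphism $\psi\co G_0\to\Mod(S)/Z(S)$ with finite kernel $K$ and finite index image. Since $c(S)\ge 2$, Corollary~\ref{cor:subgroup} gives an embedding $A(P_4)\hookrightarrow\Mod(S)$; because $P_4$ has no vertex adjacent to all others, $A(P_4)$ has trivial center, so any element of $A(P_4)$ lying in the central subgroup $Z(S)$ would be central in $A(P_4)$ and hence trivial. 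Thus the composition $A(P_4)\hookrightarrow\Mod(S)\to\Mod(S)/Z(S)$ is still injective, and Lemma~\ref{lem:subgroup} places a copy of $A(P_4)$ inside any chosen finite index subgroup of $\Mod(S)/Z(S)$.

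The main obstacle is that $\psi$ has nontrivial (though finite) kernel $K$, which need not be central, so a priori the preimage of such an $A(P_4)$ is only a finite--kernel extension of $A(P_4)$ rather than a copy of $A(P_4)$ itself. To cope with this I would first pass to the centralizer $G_1=C_{G_0}(K)$: the conjugation map $G_0\to\Aut(K)$ has finite image, so $G_1$ has finite index in $G_0$, and $K\cap G_1=Z(K)$ is now a finite central subgroup of $G_1$. Since $\psi(G_1)$ still has finite index in $\Mod(S)/Z(S)$, Lemma~\ref{lem:subgroup} furnishes a subgroup $A^*\le\psi(G_1)$ isomorphic to $A(P_4)$. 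Its preimage $\hat A=\psi^{-1}(A^*)\cap G_1$ now fits into a central extension
\[1\to Z(K)\to\hat A\to A(P_4)\to 1\]
whose kernel is finite abelian.

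Finally I would split this central extension on a finite index subgroup via the standard power trick. Set $N=|Z(K)|$ and, for each $v\in V(P_4)$, choose a lift $\hat v\in\hat A$ of the corresponding generator of $A^*$. Whenever $v$ and $w$ commute in $A(P_4)$ the element $z=[\hat v,\hat w]$ lies in the central subgroup $Z(K)$, and a short induction using centrality together with $z^N=1$ gives $[\hat v^N,\hat w^N]=z^{N^2}=1$. Thus the family $\{\hat v^N\co v\in V(P_4)\}$ satisfies all defining relations of $A(P_4)$, yielding a surjection $\rho\co A(P_4)\twoheadrightarrow\hat A':=\langle\hat v^N\co v\in V(P_4)\rangle$. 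Composing $\rho$ with $\psi$ sends $v\mapsto v^N\in A^*$, and applying Lemma~\ref{lem:subgroup} to the identity embedding $A(P_4)\hookrightarrow A(P_4)$ shows that $\langle v^N\co v\in V(P_4)\rangle\le A^*$ is itself a copy of $A(P_4)$. Hence $\psi\circ\rho$ is injective, forcing $\rho$ to be an isomorphism. Therefore $\hat A'\cong A(P_4)$ sits inside $G$, and $\phi$ restricts to an embedding $A(P_4)\hookrightarrow\Diffb(M)$, contradicting Theorem~\ref{thm:p4} and completing the argument.
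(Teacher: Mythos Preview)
The displayed statement is the QI rigidity theorem of Behrstock--Kleiner--Minsky--Mosher and Hamenst\"adt, which the paper invokes as a black box and does not prove; your proposal is not a proof of that statement either, but rather of Theorem~\ref{thm:qic2}, the result the paper deduces from QI rigidity. I compare on that basis.

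Your argument for Theorem~\ref{thm:qic2} is correct and closely parallels the paper's, with one genuine difference in how the finite kernel $K$ is tamed. The paper uses the hypothesized faithful action on $M$ at this step: since $K$ then acts faithfully on $M$, H\"older's theorem forces the finite group $K$ to be abelian, after which one replaces the lifted generators by suitable powers so that $K$ may be assumed central, and Lemma~\ref{l:nilp} applies. You instead avoid the dynamics entirely, passing to the finite-index centralizer $G_1=C_{G_0}(K)$; this purely group-theoretic move cuts the kernel down to $Z(K)$, which is automatically central in $G_1$, and your subsequent power trick is exactly the content of Lemma~\ref{l:nilp}. Your route is a bit more self-contained (the action on $M$ is not used until the final appeal to Theorem~\ref{thm:p4}) and works for an arbitrary finite $K$ without first knowing it is abelian; the paper's route is marginally shorter by exploiting the standing hypothesis earlier. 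Both produce an embedded copy of $A(P_4)$ in $G$ and finish via Theorem~\ref{thm:p4}.
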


To prove Theorem \ref{thm:qic2}, let us start with a homomorphism $\phi\colon H_0\to L_0$ such that $[G:H_0]<\infty$, $[\Mod(S)/Z(S):L_0]<\infty$
and such that $K_0=\ker\phi$ is finite.
Let $L_1$ be the preimage of $L_0$ with respect to the projection $\Mod(S)\to\Mod(S)/Z(S)$. See the diagram below.
\[
\xymatrix{
& & G \ar@{-}[d]^{<\infty} & \Mod(S)/Z(S) \ar@{-}[d]^{<\infty}& \Mod(S)\ar@{-}[d]^{<\infty}\ar@{->>}[l]\\
1 \ar[r] & K_0 \ar@{-}[d]\ar[r] & H_0\ar[r]^\phi \ar@{-}[d] & L_0 \ar@{-}[d] & L_1 \ar@{-}[d]\ar@{->>}[l] \\
1 \ar[r] & K\ar[r] & H=\phi^{-1}(L)\ar[r]& L & L\cong A(P_4)\ar[l]_\cong
}
\]
Since $c(S)\ge2$, there exists $L\cong A(P_4)$ inside $L_1$. Since $L$ is torsion-free and $|Z(S)|<\infty$, we have that
$\Mod(S)/Z(S)$ also contains a copy of $L$. We denote this copy again as $L$.
Put $H=\phi^{-1}(L)$ and $K=H\cap K_0$. We obtain an extension \[1\to K\to H\to A(P_4)\to 1.\] 
We wish to show that $H$ cannot act faithfully by $\Cbv$ diffeomorphisms on $M$.

Choosing arbitrary lifts of generators of $A(P_4)$ to $H$, we have that the conjugation action acts by automorphisms of $K$. Since $K$ is finite and acts faithfully on $M$, H\"older's Theorem implies that $K$ is abelian. Moreover, some positive power of each generator of $H$ acts by the identity on $K$, so that we may assume $K$ is central.
Theorem \ref{thm:qic2} now follows immediately from Theorem~\ref{thm:p4} and the following lemma:
\begin{lem}\label{l:nilp}
Let $H$ be a group and $K$ be a finite subgroup of $H$ such that we have a central extension
\[1\to K\to H\to A(P_4)\to 1.\] Then $A(P_4)$ embeds into $H$.
\end{lem}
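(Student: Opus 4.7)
The plan is to exhibit explicit lifts of powers of the standard generators of $A(P_4)$ in $H$ that still satisfy the defining commutation relations, and then to argue injectivity via the projection $H\twoheadrightarrow A(P_4)$.

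Let $V(P_4)=\{v_1,v_2,v_3,v_4\}$ and choose, for each $i$, an arbitrary preimage $\tilde v_i\in H$ of $v_i$ under the projection $\pi\co H\to A(P_4)$. Set $n=|K|$ and \[w_i=\tilde v_i^{\,n}\in H.\] I claim $w_i$ and $w_j$ commute whenever $\{v_i,v_j\}$ is an edge of $P_4$. Indeed, for such $i,j$ the commutator $[\tilde v_i,\tilde v_j]$ lies in $K=\ker\pi$, and $K$ is central in $H$ by hypothesis. A standard computation for commutators with central value gives $[x^a,y^b]=[x,y]^{ab}$ whenever $[x,y]$ is central; applying this with $a=b=n$ yields \[[w_i,w_j]=[\tilde v_i^{\,n},\tilde v_j^{\,n}]=[\tilde v_i,\tilde v_j]^{n^2}=1,\] because $[\tilde v_i,\tilde v_j]\in K$ has order dividing $n$. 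By the presentation of $A(P_4)$, this provides a well--defined homomorphism \[\psi\co A(P_4)\to H,\qquad v_i\mapsto w_i.\]

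It remains to see $\psi$ is injective. The composition $\pi\circ\psi\co A(P_4)\to A(P_4)$ is the endomorphism $v_i\mapsto v_i^n$, which is precisely the map used in the proof of Lemma~\ref{lem:subgroup}: its image is the subgroup $\langle v^n\mid v\in V(P_4)\rangle$, which is isomorphic to $A(P_4)$, and the map itself is an isomorphism onto that subgroup. In particular $\pi\circ\psi$ is injective, and hence so is $\psi$.

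The only nontrivial point is the centrality of $K$ in $H$, which is given, together with the elementary commutator identity used above; once these are in place the construction of $\psi$ and the verification of injectivity are formal.
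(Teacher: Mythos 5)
Your proof is correct and takes essentially the same approach as the paper: choose arbitrary lifts of the generators, raise them to powers that kill the commutator defects lying in the central subgroup $K$, and observe that the resulting map $A(P_4)\to H$ is injective because its composition with $H\twoheadrightarrow A(P_4)$ is the injective endomorphism $v\mapsto v^n$ (Lemma~\ref{lem:subgroup}). Your use of the uniform exponent $n=|K|$ together with the identity $[x^a,y^b]=[x,y]^{ab}$ for central commutators is a slightly cleaner way of organizing the paper's ``replace by appropriate positive powers'' step, and your injectivity argument is stated a bit more carefully than the paper's passing remark about splitting, but there is no substantive difference.
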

\begin{proof}
Label $P_4$ as in Figure~\ref{f:p4}, and choose lifts $\{\alpha,\beta,\gamma,\delta\}$ of $\{a,b,c,d\}$ to $H$. We have that $[b,d]=1$ in $A(P_4)$, so that $[\beta,\delta]=q\in K$ in $H$. Suppose $q$ has order $n$. We then compute $[\beta^n,\delta]=q^n=1$ in $H$. Thus, we may replace $\{\alpha,\beta,\gamma,\delta\}$ by appropriate positive powers so that pairs which commute in the projection to $A(P_4)$ will commute in $H$. It follows then that there is a homomorphism $A(P_4)\to H$ which splits the surjection $H\to A(P_4)$, whence it follows that $H$ contains a subgroup isomorphic to $A(P_4)$.
\end{proof}

\begin{rem}
In the proof of Lemma~\ref{l:nilp}, it is immediate that the group $\form{\beta,\delta}$ is nilpotent. 
The Plante--Thurston Theorem~\cite{PT1976}, \cite[Theorem 4.2.3]{Navas2011} states that every nilpotent subgroup of $\Diffb(M)$ is abelian.
Hence with an additional hypothesis that $H\le\Diffb(M)$, we deduce that $[\beta,\delta]=[\delta,\alpha]=[\alpha,\gamma]=1$ and that $\form{\alpha,\beta,\gamma,\delta}\cong A(P_4)$. 
\end{rem}

\def\cprime{$'$} \def\soft#1{\leavevmode\setbox0=\hbox{h}\dimen7=\ht0\advance
  \dimen7 by-1ex\relax\if t#1\relax\rlap{\raise.6\dimen7
  \hbox{\kern.3ex\char'47}}#1\relax\else\if T#1\relax
  \rlap{\raise.5\dimen7\hbox{\kern1.3ex\char'47}}#1\relax \else\if
  d#1\relax\rlap{\raise.5\dimen7\hbox{\kern.9ex \char'47}}#1\relax\else\if
  D#1\relax\rlap{\raise.5\dimen7 \hbox{\kern1.4ex\char'47}}#1\relax\else\if
  l#1\relax \rlap{\raise.5\dimen7\hbox{\kern.4ex\char'47}}#1\relax \else\if
  L#1\relax\rlap{\raise.5\dimen7\hbox{\kern.7ex
  \char'47}}#1\relax\else\message{accent \string\soft \space #1 not
  defined!}#1\relax\fi\fi\fi\fi\fi\fi}
\providecommand{\bysame}{\leavevmode\hbox to3em{\hrulefill}\thinspace}
\providecommand{\MR}{\relax\ifhmode\unskip\space\fi MR }
\providecommand{\MRhref}[2]{%
  \href{http://www.ams.org/mathscinet-getitem?mr=#1}{#2}
}
\providecommand{\href}[2]{#2}


\begin{thebibliography}{10}

\bibitem{BKK2014}
Hyungryul Baik, Sang hyun Kim, and Thomas Koberda, \emph{{Right-angled Artin
  subgroups of the $C^{\infty}$ diffeomorphism group of the real line}}, Israel
  J. Math. (to appear).

\bibitem{BKMM2012}
Jason Behrstock, Bruce Kleiner, Yair Minsky, and Lee Mosher, \emph{Geometry and
  rigidity of mapping class groups}, Geom. Topol. \textbf{16} (2012), no.~2,
  781--888. \MR{2928983}

\bibitem{BV2003}
Martin~R. Bridson and Karen Vogtmann, \emph{Homomorphisms from automorphism
  groups of free groups}, Bull. London Math. Soc. \textbf{35} (2003), no.~6,
  785--792. \MR{2000025}

\bibitem{BM1999}
M.~Burger and N.~Monod, \emph{Bounded cohomology of lattices in higher rank
  {L}ie groups}, J. Eur. Math. Soc. (JEMS) \textbf{1} (1999), no.~2, 199--235.
  \MR{1694584 (2000g:57058a)}

\bibitem{CLM2014}
Matt Clay, Christopher~J. Leininger, and Dan Margalit, \emph{Abstract
  commensurators of right-angled {A}rtin groups and mapping class groups},
  Math. Res. Lett. \textbf{21} (2014), no.~3, 461--467. \MR{3272023}

\bibitem{CLM2012}
Matt~T. Clay, Christopher~J. Leininger, and Johanna Mangahas, \emph{The
  geometry of right-angled {A}rtin subgroups of mapping class groups}, Groups
  Geom. Dyn. \textbf{6} (2012), no.~2, 249--278. \MR{2914860}

\bibitem{CLB1981}
D.~G. Corneil, H.~Lerchs, and L.~Stewart Burlingham, \emph{Complement reducible
  graphs}, Discrete Appl. Math. \textbf{3} (1981), no.~3, 163--174. \MR{619603
  (84d:05137)}

\bibitem{DGO2011}
F.~Dahmani, V.~Guirardel, and D.~Osin, \emph{Hyperbolically embedded subgroups
  and rotating families in groups acting on hyperbolic spaces}, to appear in
  Memoirs of AMS (2011).

\bibitem{Denjoy1958}
Arnaud Denjoy, \emph{Sur les courbes d\'efinies par les \'equations
  diff\'erentielles}, Advancement in Math. \textbf{4} (1958), 161--187.
  \MR{0101369 (21 \#180)}

\bibitem{Farb2006}
Benson Farb, \emph{Some problems on mapping class groups and moduli space},
  Problems on mapping class groups and related topics, Proc. Sympos. Pure
  Math., vol.~74, Amer. Math. Soc., Providence, RI, 2006, pp.~11--55.
  \MR{2264130 (2007h:57018)}

\bibitem{FF2001}
Benson Farb and John Franks, \emph{{Groups of homeomorphisms of one-manifolds,
  I: actions of nonlinear groups}}, ArXiv Mathematics e-prints (2001).

\bibitem{FF2003}
\bysame, \emph{Groups of homeomorphisms of one-manifolds. {III}. {N}ilpotent
  subgroups}, Ergodic Theory Dynam. Systems \textbf{23} (2003), no.~5,
  1467--1484. \MR{2018608 (2004k:58013)}

\bibitem{MR2850125}
Benson Farb and Dan Margalit, \emph{A primer on mapping class groups},
  Princeton Mathematical Series, vol.~49, Princeton University Press,
  Princeton, NJ, 2012. \MR{2850125 (2012h:57032)}

\bibitem{Ghys1999}
{\'E}tienne Ghys, \emph{Actions de r\'eseaux sur le cercle}, Inventiones Math.
  \textbf{137} (1999), no.~1, 199--231. \MR{1703323 (2000j:22014)}

\bibitem{Ghys2001}
\bysame, \emph{Groups acting on the circle}, Enseign. Math. (2) \textbf{47}
  (2001), no.~3-4, 329--407. \MR{1876932 (2003a:37032)}

\bibitem{Grabowski1988}
Janusz Grabowski, \emph{Free subgroups of diffeomorphism groups}, Fund. Math.
  \textbf{131} (1988), no.~2, 103--121. \MR{974661 (90b:58031)}

\bibitem{Haettel2015}
T.~{Haettel}, \emph{{Cocompactly cubulated Artin-Tits groups}}, ArXiv e-prints
  (2015).

\bibitem{Hamenstadt2005}
U.~{Hamenstaedt}, \emph{{Geometry of the mapping class groups III:
  Quasi-isometric rigidity}}, ArXiv Mathematics e-prints (2005).

\bibitem{HJP2015}
J.~{Huang}, K.~{Jankiewicz}, and P.~{Przytycki}, \emph{{Cocompactly cubulated
  2-dimensional Artin groups}}, ArXiv e-prints (2015).

\bibitem{Ivanov1992}
Nikolai~V. Ivanov, \emph{Subgroups of {T}eichm{\"u}ller modular groups},
  Translations of Mathematical Monographs, vol. 115, American Mathematical
  Society, Providence, RI, 1992, Translated from the Russian by E. J. F.
  Primrose and revised by the author. \MR{1195787}

\bibitem{KK2013}
Sang-hyun Kim and Thomas Koberda, \emph{Embedability between right-angled
  {A}rtin groups}, Geom. Topol. \textbf{17} (2013), no.~1, 493--530.
  \MR{3039768}

\bibitem{KK2013b}
\bysame, \emph{The geometry of the curve graph of a right-angled {A}rtin
  group}, Internat. J. Algebra Comput. \textbf{24} (2014), no.~2, 121--169.
  \MR{3192368}

\bibitem{Koberda2012}
Thomas Koberda, \emph{Right-angled {A}rtin groups and a generalized isomorphism
  problem for finitely generated subgroups of mapping class groups}, Geom.
  Funct. Anal. \textbf{22} (2012), no.~6, 1541--1590. \MR{3000498}

\bibitem{KMT2014}
Thomas Koberda, Johanna Mangahas, and Sam~J. Taylor, \emph{{The geometry of
  purely loxodromic subgroups of right-angled Artin groups}}, Trans. Amer.
  Math. Soc. (2016), To appear.

\bibitem{Kopell1970}
Nancy Kopell, \emph{Commuting diffeomorphisms}, Global {A}nalysis ({P}roc.
  {S}ympos. {P}ure {M}ath., {V}ol. {XIV}, {B}erkeley, {C}alif., 1968), Amer.
  Math. Soc., Providence, R.I., 1970, pp.~165--184. \MR{0270396 (42 \#5285)}

\bibitem{Markovic2007}
Vladimir Markovic, \emph{Realization of the mapping class group by
  homeomorphisms}, Invent. Math. \textbf{168} (2007), no.~3, 523--566.
  \MR{2299561 (2008c:57033)}

\bibitem{Navas2002ASENS}
Andr{{\'e}}s Navas, \emph{Actions de groupes de {K}azhdan sur le cercle}, Ann.
  Sci. {\'E}cole Norm. Sup. (4) \textbf{35} (2002), no.~5, 749--758.
  \MR{1951442 (2003j:58013)}

\bibitem{Navas2005}
\bysame, \emph{Quelques nouveaux ph\'enom\`enes de rang 1 pour les groupes de
  diff\'eomorphismes du cercle}, Comment. Math. Helv. \textbf{80} (2005),
  no.~2, 355--375. \MR{2142246}

\bibitem{Navas2010}
\bysame, \emph{A finitely generated, locally indicable group with no faithful
  action by {$C^1$} diffeomorphisms of the interval}, Geom. Topol. \textbf{14}
  (2010), no.~1, 573--584. \MR{2602845}

\bibitem{Navas2011}
\bysame, \emph{Groups of circle diffeomorphisms}, Spanish ed., Chicago Lectures
  in Mathematics, University of Chicago Press, Chicago, IL, 2011. \MR{2809110}

\bibitem{Parwani2008}
Kamlesh Parwani, \emph{{$C^1$} actions on the mapping class groups on the
  circle}, Algebr. Geom. Topol. \textbf{8} (2008), no.~2, 935--944. \MR{2443102
  (2010c:37061)}

\bibitem{PT1976}
J.~F. Plante and W.~P. Thurston, \emph{Polynomial growth in holonomy groups of
  foliations}, Comment. Math. Helv. \textbf{51} (1976), no.~4, 567--584.
  \MR{0436167 (55 \#9117)}
  
\bibitem{Rivas2012}
 Crist\'obal Rivas, \emph{Left-orderings on free products of groups}, J. Algebra \textbf{350} (2012), 318--329. \MR{2859890 (2012j:20126)}

\bibitem{Witte1994}
Dave Witte~Morris, \emph{Arithmetic groups of higher {${\bf Q}$}-rank cannot
  act on {$1$}-manifolds}, Proc. Amer. Math. Soc. \textbf{122} (1994), no.~2,
  333--340. \MR{1198459 (95a:22014)}

\end{thebibliography}

\end{document}